\def\norm#1#2{\|#1\|_{#2}}
\def\refer#1{~\ref{#1}}
\def\refeq#1{~(\ref{#1})}
\def\ccite#1{~\cite{#1}}
\def\suite#1#2#3{(#1_{#2})_{#2\in {#3}}}
\def\longformule#1#2{
\displaylines{ \qquad{#1} \hfill\cr \hfill {#2} \qquad\cr } }
\def\inte#1{
\displaystyle\mathop{#1\kern0pt}^\circ }
\def\sumetage#1#2{\sum_{\substack{{#1}\\{#2}}}}
\def\supetage#1#2{\sup_{\substack{{#1}\\{#2}}}}
\let\al=\alpha
\let\b=\beta
\let\g=\gamma
\let\d=\delta
\let\e=\varepsilon
\let\ep=\varepsilon
\let\lam=\lambda
\let\s=\sigma
\let\f=\phi
\let\D=\Delta
\let\wt=\widetilde
\let\wh=\widehat
\let\ov\overline
\def\cB{{\mathcal B}}
\def\cC{{\mathcal C}}
\def\cD{{\mathcal D}}
\def\cF{{\mathcal F}}
\def\cG{{\mathcal G}}
\def\cI{{\mathcal I}}
\def\cJ{{\mathcal J}}
\def\cK{{\mathcal K}}
\def\cM{{\mathcal M}}
\def\cO{{\mathcal O}}
\def\cP{{\mathcal P}}
\def\cS{{\mathcal S}}
\def\cT{{\mathcal T}}
\def\cU{{\mathcal U}}
\def\cW{{\mathcal W}}
\def\cX{{\mathcal X}}
\def\virgp{\raise 2pt\hbox{,}}
\def\cdotpv{\raise 2pt\hbox{;}}
\def\eqdefa{\buildrel\hbox{\footnotesize def}\over =}
\def\sgn{\mathop{\rm sgn}\nolimits}
\def\C{\mathop{\mathbb C\kern 0pt}\nolimits}
\def\DD{\mathop{\mathbb D\kern 0pt}\nolimits}
\def\EE{\mathop{{\mathbb E \kern 0pt}}\nolimits}
\def\K{\mathop{\mathbb K\kern 0pt}\nolimits}
\def\N{\mathop{\mathbb N\kern 0pt}\nolimits}
\def\Q{\mathop{\mathbb Q\kern 0pt}\nolimits}
\def\R{{\mathop{\mathbb R\kern 0pt}\nolimits}}
\def\SS{\mathop{\mathbb S\kern 0pt}\nolimits}
\def\ZZ{\mathop{\mathbb Z\kern 0pt}\nolimits}
\def\TT{\mathop{\mathbb T\kern 0pt}\nolimits}
\def\P{\mathop{\mathbb P\kern 0pt}\nolimits}
\def \H{{\mathop {\mathbb H\kern 0pt}\nolimits}}
\newcommand{\ds}{\displaystyle}
\newcommand{\Z}{{\ZZ}}
\newcommand{\beq}{\begin{equation}}
\newcommand{\eeq}{\end{equation}}
\newcommand{\ben}{\begin{eqnarray}}
\newcommand{\een}{\end{eqnarray}}
\newcommand{\beno}{\begin{eqnarray*}}
\newcommand{\eeno}{\end{eqnarray*}}
\newcommand{\bqs}{\begin{equation*}}
\newcommand{\eqs}{\end{equation*}}
\newcommand{\andf}{\quad\hbox{and}\quad}
\newcommand{\with}{\quad\hbox{with}\quad}
\def \cFH {\cF_\H}
\def\equivH#1 {\buildrel\hbox{\tiny {$#1$}}\over \equiv}
\def\simH#1 {\buildrel\hbox{\footnotesize {$#1$}}\over \sim}
\def\prodHO  {\buildrel\hbox{\footnotesize {$\wh \H^d_0$}}\over\cdot}
\newtheorem{definition}{Definition}[section]
\newtheorem{theorem}{Theorem}[section]
\newtheorem{lemma}{Lemma}[section]
\newtheorem{remark}{Remark}[section]
\newtheorem{proposition}{Proposition}[section]
\numberwithin{equation}{section}
\begin{document}
\title[Fourier transform  on the Heisenberg group]
{Tempered distributions and Fourier transform on the Heisenberg group}
 \author[H. BAHOURI] {Hajer Bahouri} 
 \address[H. Bahouri]%
{LAMA, UMR 8050\\
Universit\'e Paris-Est Cr\'eteil, 94010 Cr\'eteil Cedex, FRANCE}
  \email{hajer.bahouri@math.cnrs.fr}
  \email{}    
 \author[J.-Y. CHEMIN]{Jean-Yves Chemin}
\address [J.-Y. Chemin]%
{Laboratoire J.-L. Lions, UMR 7598 \\
Universit\'e Pierre et Marie Curie, 75230 Paris Cedex 05, FRANCE }
\email{chemin@ann.jussieu.fr}
\author[R. DANCHIN]{Raphael Danchin }%
\address[R. Danchin] {LAMA, UMR 8050\\
Universit\'e Paris-Est Cr\'eteil, 94010 Cr\'eteil Cedex, FRANCE}
  \email{danchin@u-pec.fr}
 
\date{\today}

\begin{abstract} The final goal of the present work is to  extend the Fourier  transform on the Heisenberg group $\H^d,$ to tempered distributions. 
 As in  the Euclidean setting, the strategy is to first show that the Fourier transform is an isomorphism on the Schwartz space, 
then to define the extension by duality. 
The difficulty that is here encountered  is that the Fourier transform of an integrable function on $\H^d$
is no  longer a function on $\H^d$ : according to the standard definition, 
it is a family of bounded operators on  $L^2(\R^d).$ 
Following our new approach in\ccite{bcdFHspace}, we here define the  Fourier transform of an integrable function
to be a  mapping   on the  set~$\wt\H^d=\N^d\times\N^d\times\R\setminus\{0\}$
endowed with a suitable distance $\wh d$.
This viewpoint turns out to  provide a user friendly description of   
the range of the Schwartz space on $\H^d$ by the Fourier transform, which makes the extension 
to the whole set of  tempered distributions straightforward.   As a first application, we give an explicit 
formula for the Fourier transform of smooth functions on $\H^d$ that are independent of the vertical variable. 
We also provide other  examples.  
\end{abstract}

\maketitle

\noindent {\sl Keywords:}  Fourier transform, Heisenberg group, frequency space, tempered distributions, 
Schwartz space.

\vskip 0.2cm

\noindent {\sl AMS Subject Classification (2000):} 43A30, 43A80.

\setcounter{equation}{0}
\section{Introduction}
\label {intro} 
The present work aims at extending  Fourier  analysis on the Heisenberg group from integrable functions to tempered distributions.  It is by now very classical that in the case of a commutative group, the  Fourier transform  is a function on  the group of characters. In the   Euclidean space $\R^n$  the  group of characters may be identified to the dual space~$(\R^n)^\star$ of~$\R^n$  through the map~$\xi \mapsto  e^{i\langle \xi ,\cdot\rangle},$ where $\langle \xi, \cdot \rangle$  designate the value of the one-form $\xi$ when applied to elements of ~$\R^n$, and the Fourier transform
 of an integrable function~$f$  may  be seen as a function on $(\R^n)^\star$,  defined by the formula
\beq
\label {definFourierclassic}
\cF (f) (\xi) = \wh f(\xi)\eqdefa \int_{\R^n} e^{-i\langle  \xi,x\rangle } f(x)\, dx.
\eeq

A fundamental fact of the distribution theory on $\R^n$ is  that  the Fourier transform
  is a bi-continuous isomorphism on  the  Schwartz space~$\cS(\R^n)$  -- the set of smooth  functions  whose derivatives decay at  infinity  faster than any power. 
  Hence, one can  define the transposed Fourier transform~${}^t\!\cF$ on 
  the   so-called set of tempered distributions~$\cS'(\R^n),$ 
 that is   the topological dual of~$\cS(\R^n)$  (see e.g. \cite{rudin, S} for  a self-contained presentation).
      Now,  as the whole distribution theory on~$\R^n$  is based on  identifying locally integrable functions  
  with  linear forms by means of the Lebesgue integral, it is natural to look for 
a more direct relationship between~${}^t\!\cF$ and~$\cF,$   by considering the following bilinear form on~$\cS(\R^n)\times\cS(\R^n)$
\beq
\label {definFourierRbilin}
\cB_\R (f,\phi)\eqdefa \int_{T^\star \R^n} f(x) e^{-i\langle \xi,x\rangle} \phi(\xi) \,dx \,d\xi,
\eeq
where the cotangent bundle~$T^\star \R^n$ of~$\R^n$  is  identified to~$\R^n\times (\R^n)^\star$. 
The  above bilinear form allows to identify ${}^t\!\cF_{|\cS((\R^n)^\star)}$   to 
 $\cF_{|\cS(\R^n)},$ and  still makes sense if~$f$ and~$\f$ are in~$L^1(\R^n)$,  because the  function~$f\otimes \f$ is integrable on~$T^\star \R^n$. 
 It is thus natural to define the extension of $\cF$ on $\cS'(\R^n)$ to be ${}^t\!\cF.$ 
 In other words, 
\beq
\label {definFourierS'}
  \forall (T,\phi) \in  \cS'(\R^n)\times\cS(\R^n)\,,\  \langle \wh T,\phi\rangle_{\cS'(\R^n)\times\cS(\R^n)}\eqdefa \langle T,\wh\phi\rangle_{\cS'(\R^n)\times\cS(\R^n)}.
  \eeq
  
  We aim at  implementing  that  procedure  on the  Heisenberg group~$\H^d.$ As in the Euclidean case, to achieve our goal, 
 it is fundamental to have a  handy characterization  of 
the range of the Schwartz space on $\H^d$ by the Fourier transform. 
The first   attempt   in that direction goes back to the pioneering works by Geller in \cite{geller2, geller}
(see also \ccite{astengo2,F,thangavelu2} and the references therein), 
where   asymptotic series are used.   
Whether the  description  of $\cF(\cS(\H^d))$ given therein allows to extend the Fourier transform to tempered distribution is unclear, though. 
 \bigbreak
 Before presenting our main results, we have to recall the definitions of the Heisenberg group $\H^d$ and of the Fourier transform on~$\H^d.$ 
Throughout this paper we shall see~$\H^d$ 
  as the  set~$T^\star\R^d \times\R$  equipped with the product law
  $$
w\cdot w'\eqdefa
\bigl(Y+Y' , s+s'+ 2\s(Y,Y')\bigr) = \bigl(y+y',  \eta+\eta' , s+s'+2 \langle \eta,y'\rangle -2\langle \eta',y\rangle\bigr)
$$
where~$w=(Y,s)=(y,\eta,s)$ and $w'=(Y',s')=(y',\eta',s')$ are  generic elements of~$\H^d.$ In the above definition, the notation~$\langle \cdot,\cdot\rangle$ 
 designates the duality bracket  between~$(\R^d)^\star $ and~$\R^d$ and~$\s$ is the canonical symplectic form on~$\R^{2d}$ seen as~$T^\star\R^d$. This gives on~$\H^d$ a structure of a non commutative group for which~$w^{-1}=-w$.  
We refer for instance to the books   \cite{bfg, farautharzallah, fisher, folland, follandstein, stein2, taylor1,  thangavelu} and the references therein for further details.

\smallbreak
In accordance with the above product formula, one can define the set of the dilations on the Heisenberg group 
to be the family of operators $(\d_a)_{a>0}$ given by
\beq\label {defdilations}\d_a(w) = \d_a (Y,s) \eqdefa (aY,a^2s). \eeq
Note that dilations commute with the product law  on $\H^d,$  that is $\d_a(w\cdot w') = \d_a(w) \cdot \d_a (w').$  Furthermore, as 
   the determinant of~$\d_a$ (seen as an automorphism  of $\R^{2d+1}$) is~$a^{2d+2},$  it is natural to define the \emph{homogeneous dimension} of~$\H^d$ to be~$N\eqdefa2d+2$.
\smallbreak
The Heisenberg group is endowed with a smooth left invariant  \emph{Haar measure,} which,  in the coordinate system~$(y, \eta, s)$ is just  the  Lebesgue measure on $\R^{2d+1}.$  The corresponding Lebesgue spaces $L^p (\H^d)$  are thus the sets of measurable functions $f:\H^d\to\C$ such that 
$$
 \|f\|_{L^p (\H^d)}  \eqdefa \left( \int_{\H^d} |f(w)|^p \: dw \right)^\frac1p<\infty, \quad\hbox{if }\ 1\leq p<\infty,
 $$
 with the standard modification if~$p=\infty.$
 \medbreak
 The convolution product  of any two integrable functions~$f$ and~$g$ is given by
\beq
\label {definConvolH}
f \star g ( w ) \eqdefa \int_{\H^d} f ( w \cdot v^{-1} ) g( v)\, dv 
= \int_{\H^d} f ( v ) g( v^{-1} \cdot w)\, dv.
\eeq
As in the Euclidean case, the convolution product is an \emph{associative}
binary operation on the set of integrable functions. Even though it is no longer commutative, 
 the following \emph{Young inequalities}   hold true:
\[ \norm{ f \star g }{L^r} \leq \norm f {L^p} \norm g {L^q},
 \quad \hbox{whenever}\ 1\leq p,q,r\leq\infty\ \hbox{ and }\
 \frac{1}{r} =  \frac{1}{p} + \frac{1}{q} - 1.
\]

The \emph{Schwartz space} $\cS(\H^d)$ corresponds to the
  Schwartz space~$\cS(\R^{2d+1})$ (an equivalent definition involving the Heisenberg structure will be provided in  Appendix \ref{subellipticity}).

As the Heisenberg group  is  noncommutative, it is unfortunately not possible 
to define the Fourier transform of integrable functions on $\H^d,$
by a formula similar to\refeq{definFourierclassic}, just resorting to the characters of $\H^d.$ Actually, the group of characters on~$\H^d$ is isometric to the group  of characters  on~$T^\star\R^d$ and,  if one defines the Fourier transform 
 according to Formula\refeq{definFourierclassic}  then  the information pertaining to the vertical variable~$s$ is  lost. 
One has to use a more elaborate family of irreducible  representations.  As explained for instance in \cite{taylor1} Chapter 2, all irreducible  representations of $\H^d$ are unitary equivalent to  
the \emph{Schr\"odinger representation}~$(U^\lam)_{\lambda\in\R\setminus\{0\}}$  which is  the  family of  
group homomorphisms $w\mapsto U^\lam_w$
between~$\H^d$ and the  unitary group~$\cU(L^2(\R^d))$ of~$L^2(\R^d)$
defined for all~$w=(y,\eta, s)$ in~$\H^d$ and $u$ in $L^2(\R^d)$ by 
$$
U^\lam _w u(x)\eqdefa e^{-i\lam (s+2\langle \eta, x-y\rangle)} u(x-2y).
$$ 
The standard definition of the Fourier transform reads as follows.
\begin{definition}
\label {definFourierSchrodinger}
{\sl For $f$ in $L^1(\H^d)$ and $\lam$ in~$ \R\setminus\{0\}$, we define 
$$
\cF^{\H} (f)(\lam) \eqdefa \int_{\H^d} f(w) U^\lam_w\, dw.
$$
The function $\cF^{\H} (f)$ which takes values in the space of bounded operators on $L^2(\R^d)$, is by definition the \emph{ Fourier transform}   \index{Fourier!transform}
of  $f$.
}
\end{definition}

As  the map~$w\mapsto U^\lam_w$ is a homomorphism
between~$\H^d$ and the  unitary group~$\cU(L^2(\R^d))$ of~$L^2(\R^d)$, it is clear that for any couple $(f,g)$ of integrable functions, we have
\beq
\label {FourierConvol} 
\cF^{\H} (f\star g) (\lam)  =  \cF^{\H} (f)(\lam)\circ \cF^{\H}(g)(\lam).
\eeq

An obvious drawback of Definition\refer{definFourierSchrodinger} is that $\cF^\H f$ \emph{is not} 
a complex valued function on some `frequency space', but a much more complicated
object. Consequently,  with this viewpoint, one can hardly expect to have    a characterization of the range of
the Schwartz space by~$\cF^\H,$ allowing for our  extending  the Fourier transform to tempered distributions.

To overcome that difficulty, we proposed in our recent paper\ccite{bcdFHspace}
 an alternative (equivalent) definition 
that makes the Fourier transform of any  integrable  function on $\H^d,$ a continuous function 
on another (explicit and simple) set~$\wh\H^d$ endowed with some distance  $\wh d$. 

Before giving our definition, we need to introduce some notation. 
Let us first recall that the Lie algebra  of   \emph{left invariant} vector fields,
that is vector fields commuting with any   left translation~$\tau_w(w') \eqdefa w\cdot w'$,
  is spanned by the vector fields
$$ S\eqdefa\partial_s\,,\ \  \cX_j\eqdefa\partial_{y_j} +2\eta_j\partial_s\andf \Xi_j\eqdefa \partial_{\eta_j} -2y_j\partial_s\,,\ 1\leq j\leq d.
$$
The  \emph{Laplacian} \index{Laplacian}
associated to the vector fields~$(\cX_j)_{1\leq j\leq d}$ and~$(\Xi_j)_{1\leq j\leq d}$ is defined by
\begin{equation}
\label{defLaplace}
\D_{\H}  \eqdefa \sum_{j=1} ^d (\cX_j^2+\Xi_j^2), 
\end{equation}
and may be alternately rewritten in terms of  the usual derivatives as follows:
\beq
\label {laplacienexplicite}
\D_\H f(Y,s)  = \D_{Y} f (Y,s) + 4\sum_{j=1} ^d  ( \eta_j \partial_{y_j} -y_j\partial_{\eta_j}) \partial_s f(Y,s) +4|Y|^2\partial_s^2 f(Y,s).
\eeq

The Laplacian plays a fundamental role in the Heisenberg group and in particular in 
the Fourier transform theory. 
The starting point is the following relation 
that holds true for functions on the Schwartz space (see e.g.\ccite{huet, O}):
\beq
\label {FourierEtLaplace}
\cF^{\H}(\D_\H f) (\lam) = 4\cF^{\H}(f)(\lam)  \circ  \D_{\rm osc} ^\lam \with \D_{\rm osc}^\lam u (x) \eqdefa\sum_{j=1}^d \partial_j^2 u(x) - \lam^2|x|^2 u(x).
\eeq
In order to  take advantage of  the spectral structure of the harmonic oscillator, 
it is natural to introduce the corresponding eigenvectors, that is 
the family of Hermite functions~$(H_n)_{n\in\N^d}$
defined by 
\beq
\label{Hermite functions}
H_n \eqdefa  \Bigl(\frac 1 {2^{|n|} n!}\Bigr) ^{\frac 12}C^n H_0 \with C^n \eqdefa \prod_{j=1}^d C_j^{n_j}
\andf H_0(x)\eqdefa \pi^{-\frac d 2} e^{-\frac {|x|^2} 2},
\eeq
where~$C_j\eqdefa -\partial_j +M_j$ stands for  the 
 \emph{creation operator} with respect to the $j$-th variable
 and~$M_j$ is the multiplication operator  defined by~$M_ju(x)\eqdefa x_ju(x).$
 As usual, $n!\eqdefa n_1!\dotsm n_d!$ and $|n|\eqdefa n_1+\cdots+n_d$. 
\medbreak
Recall that~$ \suite H n {\N^d}$ is an orthonormal basis of~$L^2(\R^d),$ and 
that we have
\beq
\label {relationsHHermite}
( -\partial_j^2+M_j^2) H_n =( 2n_j+1) H_n \quad\hbox{and thus}\quad -\D_{\rm osc}^1 H_n = (2|n|+d) H_n.
\eeq
For~$\lam$ in~$\R\setminus\{0\},$ we finally introduce
the rescaled Hermite function~$H_{n,\lam} (x)\eqdefa |\lam|^{\frac d 4} H_n(|\lam|^{\frac 12} x)$. 
It is obvious that~$(H_{n,\lam})_{n\in \N^d}$ is still  an orthonormal basis of~$L^2(\R^d)$ and that 
\beq
\label {relationsHHermiteD}
( -\partial_j^2+\lam^2M_j^2) H_{n,\lam} =( 2n_j+1)|\lam| H_{n,\lam} \quad\hbox{and thus}\quad -\D_{\rm osc}^\lam H_{n,\lam} = (2|n|+d)|\lam| H_{n,\lam}.
\eeq
\begin{remark}
{\sl The vector fields
$$
\wt \cX_j \eqdefa \partial_{y_j} -2\eta_j \partial_s \andf \wt \Xi_j \eqdefa \partial_{\eta_j} +2y_j \partial_s 
$$
are right invariant and we have
$$
\cF^\H (\wt \D_\H f) (\lam) = 4\D_{\rm osc} ^\lam \circ \cF^\H(f)(\lam).
$$
}
\end{remark}
Our alternative definition of the Fourier transform on $\H^d$ reads as follows:
\begin{definition}
\label {definFouriercoeffH}
{\sl 
Let~$\wt \H ^d\eqdefa \N^{2d}\times \R\setminus\{0\}.$ We denote by~$\wh w=(n,m,\lam)$ a generic point of~$\wt \H^d$. For~$f$ in~$L^1(\H^d)$,  we define 
the map~$\cFH f$ (also denoted by~$\wh f_\H$) to be 
$$
\cFH f: \ 
\left\{
\begin{array}{ccl}
\wt \H ^d  & \longrightarrow & \C\\[1ex]
\wh w & \longmapsto & 
\bigl(\cF^{\H}(f)(\lam) H_{m,\lam} |H_{n,\lam}\bigr)_{L^2}.
\end{array}
\right.
$$
}
\end{definition}

To underline the similarity between that definition and the classical one in $\R^n,$ 
one may further compute $\bigl(\cF^{\H}(f)(\lam) H_{m,\lam} |H_{n,\lam}\bigr)_{L^2}.$
One can observe that, after  an obvious change of variable, the Fourier transform recasts  in terms of 
the mean value of~$f$ \emph{modulated by  some oscillatory functions}
which are closely related to Wigner transforms of Hermite functions,  namely
\begin{eqnarray}\label {definFourierWigner}
\cF_\H f(\wh w) &&\!\!\!\!\!\!\!= \int_{\H^d}  \overline{e^{is\lam} \cW(\wh w,Y)}\, f(Y,s) \,dY\,ds
\with\\\label{definWigner}
\ds \cW(\wh w,Y) && \!\!\!\!\!\!\!\! \eqdefa \int_{\R^d} e^{2i\lam\langle \eta,z\rangle} H_{n,\lam} (y+z) H_{m,\lam} (-y+z) \,dz.
\end{eqnarray}

Let us emphasize that with  this new point of view,   Formula\refeq {FourierEtLaplace} recasts as follows:
\beq
\label {FourierdiagDeltaHfond}
\cF_{\H}(\D_\H f) (\wh w) = -4|\lam|(2|m|+d) \wh f_\H(\wh w).
\eeq

Furthermore, if we  endow the set~$\wt \H^d$ with the measure $d\wh w$ defined  by the relation
\beq
\label {definmeasurewhH}
\int_{\wt \H^d} \theta (\wh w)\, d\wh w\eqdefa \sum_{(n,m)\in \N^{2d}} \int_{\R} \theta (n,m,\lam) |\lam|^d d\lam,
\eeq
then  the classical inversion formula
and Fourier-Plancherel  theorem recast as follows:
\begin{theorem}\label {inverseFourier-Plancherel}
{\sl 
Let~$f$ be a function in~$\cS(\H^d).$
 Then  we have   the inversion formula
 \beq
\label {MappingofPHdemoeq1}
f(w) = \frac {2^{d-1}}  {\pi^{d+1} }   \int_{\wt \H^d} 
e^{is\lam} \cW(\wh w, Y)\wh f_\H (\wh w) \, d\wh w\qquad\hbox{for any}\ w \hbox{ in}\ \H^d.
\eeq
Moreover, the Fourier transform~$\cF_{\H}$ can be extended  into a bicontinuous isomorphism between~$L^2(\H^d)$ and~$L^2(\wt \H^d),$ which satisfies
\beq
\label {inverseFouriereq2}
\|\wh f_\H\|_{L^2(\wt \H^d)}^2 = \frac {\pi^{d+1}} {2^{d-1}} \|f\|_{L^2(\H^d)}^2.
\eeq
Finally, for any couple $(f,g)$ of integrable functions, the following convolution identity holds true:
\begin{equation}
\begin {aligned}
\label {newFourierconvoleq1}
 &\cFH (f\star g) (n,m,\lam)   = ( \wh f_\H \cdot \wh g_\H)(n,m,\lam) \with\\ 
 & \qquad\qquad( \wh f_\H  \cdot \wh g_\H)(n,m,\lam)  \eqdefa \sum_{\ell\in \N^{d}} \wh f_\H(n,\ell,\lam)\wh g_\H(\ell,m,\lam).
 \end {aligned} 
 \end{equation}
 }
\end{theorem}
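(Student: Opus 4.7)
The plan is to view the three statements as scalar reformulations of classical results known in the operator-valued setting, the key ingredient being that for every $\lambda\neq 0$ the rescaled Hermite functions $(H_{m,\lambda})_{m\in\N^d}$ form an orthonormal basis of $L^2(\R^d)$. Since by Definition \ref{definFouriercoeffH} the value $\wh f_\H(n,m,\lambda)$ is exactly the $(n,m)$ matrix coefficient of $\cF^\H(f)(\lambda)$ in this basis, all of (\ref{MappingofPHdemoeq1})--(\ref{newFourierconvoleq1}) will follow by expanding operators and traces in this basis.

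For the Plancherel identity (\ref{inverseFouriereq2}) I would start from the classical Plancherel theorem on $\H^d$, which asserts the existence of a constant $c_d$ such that $\|f\|_{L^2(\H^d)}^2=c_d\int_{\R}\|\cF^\H(f)(\lambda)\|_{HS}^2|\lambda|^d\,d\lambda$ for $f\in\cS(\H^d)$, and expand the Hilbert--Schmidt norm in the basis $(H_{m,\lambda})_m$ as
\begin{equation*}
\|\cF^\H(f)(\lambda)\|_{HS}^2=\sum_{m\in\N^d}\|\cF^\H(f)(\lambda)H_{m,\lambda}\|_{L^2}^2=\sum_{(n,m)\in\N^{2d}}|\wh f_\H(n,m,\lambda)|^2.
\end{equation*}
Combined with the definition (\ref{definmeasurewhH}) of $d\wh w$, this yields $\|\wh f_\H\|_{L^2(\wt\H^d)}^2=c_d^{-1}\|f\|_{L^2(\H^d)}^2$, and the density of $\cS(\H^d)$ in $L^2(\H^d)$ then gives the extension to an isometric isomorphism, once $c_d^{-1}=\pi^{d+1}/2^{d-1}$ is checked.

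For the inversion formula, I would start from the operator-valued inversion $f(w)=c_d\int_{\R}\mathrm{tr}(U^\lambda_{w^{-1}}\cF^\H(f)(\lambda))\,|\lambda|^d\,d\lambda$, expand the trace in the Hermite basis, and insert $\cF^\H(f)(\lambda)H_{m,\lambda}=\sum_n\wh f_\H(n,m,\lambda)H_{n,\lambda}$ to obtain
\begin{equation*}
\mathrm{tr}\bigl(U^\lambda_{w^{-1}}\cF^\H(f)(\lambda)\bigr)=\sum_{(n,m)\in\N^{2d}}\wh f_\H(n,m,\lambda)\bigl(U^\lambda_{w^{-1}}H_{n,\lambda}\bigm|H_{m,\lambda}\bigr)_{L^2}.
\end{equation*}
A direct computation using the formula for $U^\lambda_{w^{-1}}$, the reality of the Hermite functions and the change of variable $x=z-y$ identifies $(U^\lambda_{w^{-1}}H_{n,\lambda}|H_{m,\lambda})_{L^2}$ with $e^{is\lambda}\cW(\wh w,Y)$, which plugged into the trace and the outer integral gives exactly (\ref{MappingofPHdemoeq1}). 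The same change of variable run in reverse justifies the alternative expression (\ref{definFourierWigner}) for $\wh f_\H$ itself.

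Finally, the convolution identity (\ref{newFourierconvoleq1}) follows from (\ref{FourierConvol}): taking the $(n,m)$ matrix coefficient of $\cF^\H(f)(\lambda)\circ\cF^\H(g)(\lambda)$ and inserting the Parseval expansion of $\cF^\H(g)(\lambda)H_{m,\lambda}$ in the basis $(H_{\ell,\lambda})_\ell$ yields $\sum_\ell\wh f_\H(n,\ell,\lambda)\wh g_\H(\ell,m,\lambda)$, with the series converging absolutely because both factors are in $\ell^2(\N^d)$ by the Plancherel step. The main technical point, and where I would spend most of the care, is the explicit Wigner computation and the tracking of the constant $2^{d-1}/\pi^{d+1}$, since the normalization of $\cF^\H$, of the Hermite functions, and of the measure $d\wh w$ must all be reconciled.
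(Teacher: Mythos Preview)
Your proposal is correct: expanding the classical operator--valued Plancherel and inversion formulas on $\H^d$ in the Hermite basis, together with the identification $(U^\lambda_{w^{-1}}H_{n,\lambda}\mid H_{m,\lambda})_{L^2}=e^{is\lambda}\cW(\wh w,Y)$ (which your indicated change of variable $x=z-y$ indeed yields), recovers all three statements.

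The paper, however, takes a genuinely different and more self-contained route. Rather than invoking the operator--valued Plancherel and inversion theorems as known results, it factors $\cF_\H$ explicitly as $2^{-d}\,P_H\circ\Phi\circ\cF_{\eta,s}$, where $\cF_{\eta,s}$ is the Euclidean Fourier transform in $(\eta,s)$, $\Phi$ is the linear change of coordinates $(x,x',\lambda)\mapsto\bigl(\tfrac{x-x'}2,\lambda(x+x'),\lambda\bigr)$, and $P_H$ is the passage to coefficients in the tensor Hermite basis $(H_{n,\lambda}\otimes H_{m,\lambda})$. Each factor is an explicit $L^2$--isomorphism (with Jacobian $|\lambda|^{-d/2}$ for $\Phi$), so both the Plancherel identity and the inversion formula reduce to the \emph{Euclidean} Fourier--Plancherel theorem in $\R^{d+1}$. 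This buys a proof that does not presuppose any noncommutative harmonic analysis on $\H^d$, and makes the constant $2^{d-1}/\pi^{d+1}$ fall out mechanically from the Jacobian of $\Phi$ and the constant of the Euclidean transform---precisely the point you flagged as needing care in your approach. Your argument is cleaner if the operator--valued theory is taken as given; the paper's is the one to use if one wants an \emph{ab initio} treatment.
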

For the reader's  convenience, we present a proof of Theorem\refer{inverseFourier-Plancherel} in the appendix.


\section {Main results} \label {main}

As already mentioned, our main goal  is to extend  the Fourier transform to tempered distributions on~$\H^d$. 
If we follow the standard approach of the Euclidean setting, that is described by\refeq {definFourierRbilin} and\refeq {definFourierS'}, 
then we need  a handy description of the range of~$\cS(\H^d)$ by the Fourier transform~$\cF_\H$
in order to guess what could be the appropriate bilinear form $\cB_\H$ allowing for identifying ${}^t\!\cF_\H$ with $\cF_\H.$
To characterize $\cF(\cS(\H^d)),$  we shall just keep in mind  the most obvious properties we expect the Fourier transform to have. 
The first one is that it should  change  regularity of functions on $\H^d$ to  decay of the Fourier transform. 
This is achieved in the following lemma (see the proof in \ccite{bcdFHspace}).
\begin{lemma}
\label {decaylambdan}
{\sl 
For any integer~$p$, there exist an integer~$N_p$ and a positive constant $C_p$~such that 
for all~$\wh w$ in~$\wt \H^d$ and all~$f$ in~$\cS(\H^d),$ we have 
\begin{equation}
\label {eq:decay}
\bigl(1+ |\lam|(  |n| + |m|+ d) +|n-m|  \bigr)^p  |\wh f_\H(n,m,\lam)|  \leq  C_p  \|f\|_{N_p,\cS},
\end{equation}
where~$\|\cdot\|_{N,\cS}$ denotes the classical family of semi-norms of~$\cS(\R^{2d+1})$, namely
$$\|f\|_{N,\cS}\eqdefa \sup_{ |\al|\leq N}  \bigl\|(1+|Y|^2+s^2)^{N/2}\,\partial_{Y,s}^\al f \bigr\|_{L^\infty}.$$}
\end{lemma}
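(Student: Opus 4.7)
The plan is to extract the three summands $1$, $|\lam|(|n|+|m|+d)$ and $|n-m|$ in the prefactor of \eqref{eq:decay} by independent means, then combine them via the elementary inequality $(1+X+Y)^p\leq 3^p(1+X^p+Y^p)$. The backbone throughout is the trivial bound $|\wh f_\H(\wh w)|\leq \|f\|_{L^1(\H^d)}$ (immediate from Definition \ref{definFouriercoeffH} since the $U^\lam_w$ are unitary), together with the observation that every operator I shall produce is a polynomial-coefficient differential operator on $\R^{2d+1}$, hence maps Schwartz seminorms to Schwartz seminorms.

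The factor $|\lam|(|n|+|m|+d)$ is obtained by iterating \eqref{FourierdiagDeltaHfond}. Indeed,
\[
\bigl(|\lam|(2|m|+d)\bigr)^k\,|\wh f_\H(\wh w)|\;=\;4^{-k}\,|\cF_\H(\D_\H^k f)(\wh w)|\;\leq\;4^{-k}\,\|\D_\H^k f\|_{L^1(\H^d)},
\]
and the right-invariant analogue of \eqref{FourierdiagDeltaHfond} stated in the Remark gives the same bound with $|m|$ replaced by $|n|$. In view of \eqref{laplacienexplicite} (and its $\wt{\cdot}$-counterpart), both $L^1$-norms are controlled by some $\|f\|_{N,\cS}$ for $N$ large enough depending on $k$; taking the maximum of the two bounds handles the factor $(1+|\lam|(|n|+|m|+d))^p$.

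The $|n-m|$ factor is the delicate point, as it cannot be dominated by $|\lam|(|n|+|m|+d)$ when $|\lam|\to 0$ or when $|n|=|m|$ but $n\neq m$. The identity I aim for is
\[
(n_j-m_j)\,\cW(\wh w,Y)\;=\;i\,\sgn(\lam)\,(\eta_j\p_{y_j}-y_j\p_{\eta_j})\,\cW(\wh w,Y)\qquad(1\leq j\leq d),
\]
derived by inserting the Hermite identity $2n_jH_{n,\lam}=(-|\lam|^{-1}\p_j^2+|\lam|M_j^2-1)H_{n,\lam}$ (and its counterpart for $m_j$) into the $z$-integral \eqref{definWigner}: the two mixed second-derivative terms combine via $u''v-uv''=(u'v-uv')'$ into $\p_{z_j}\p_{y_j}[H_{n,\lam}(y+z)H_{m,\lam}(-y+z)]$, while the two multiplication terms merge into $4|\lam|y_jz_jH_{n,\lam}(y+z)H_{m,\lam}(-y+z)$; a single integration by parts in $z_j$ on each piece (using $\p_{z_j}e^{2i\lam\langle\eta,z\rangle}=2i\lam\eta_je^{2i\lam\langle\eta,z\rangle}$ and $z_je^{2i\lam\langle\eta,z\rangle}=(2i\lam)^{-1}\p_{\eta_j}e^{2i\lam\langle\eta,z\rangle}$) makes the $|\lam|^{-1}$ and $|\lam|$ prefactors cancel. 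Plugging this identity into \eqref{definFourierWigner} and integrating once by parts in $Y$ (noting that $R_j\eqdefa\eta_j\p_{y_j}-y_j\p_{\eta_j}$ is antisymmetric) produces
\[
(n_j-m_j)\,\wh f_\H(\wh w)\;=\;i\,\sgn(\lam)\,\cF_\H(R_j f)(\wh w).
\]
Since $R_j$ is a polynomial-coefficient first-order operator on $\R^{2d+1}$, iterating $p$ times and taking absolute values (the sign factor has modulus $1$) gives $|n_j-m_j|^p|\wh f_\H(\wh w)|\leq\|R_j^p f\|_{L^1}\leq C_p\|f\|_{N_p'',\cS}$; summing on $j$ delivers the $(1+|n-m|)^p$ factor.

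The main obstacle is establishing the identity for $(n_j-m_j)\cW$. Without the cancellation $u''v-uv''=(u'v-uv')'$, the two second-derivative terms produced by $2n_j$ and $2m_j$ each carry a $|\lam|^{-1}$, which would destroy the estimate for small $\lam$; the exact algebraic fit that turns those terms into a total $z_j$-derivative of the $y_j$-derivative of $\cW$, in such a way that the multiplication contributions supply the compensating $|\lam|$ factor, is precisely what makes $R_j$ work without any $\lam$-loss, hence produces $|n-m|$-decay uniformly in $\lam$.
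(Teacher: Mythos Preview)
Your argument is correct. The paper does not supply its own proof of this lemma (it refers to\ccite{bcdFHspace}), so there is no line-by-line comparison to make, but your approach is entirely in the spirit of the surrounding computations and is self-contained.

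A few remarks on the comparison. The $|\lam|(|n|+|m|+d)$ part via $\D_\H$ and $\wt\D_\H$ is exactly how the companion paper proceeds. For the $|n-m|$ factor, the route taken in\ccite{bcdFHspace} is slightly different in presentation: there one works with the operators $\cX_j,\Xi_j,\wt\cX_j,\wt\Xi_j$ on the Fourier side (as in Proposition\refer{actionX_jonFH} here) and combines their shift actions on the indices $n$ and $m$. Your direct identification of the rotation field $R_j=\eta_j\partial_{y_j}-y_j\partial_{\eta_j}$ as the operator whose Fourier symbol is $i\,\sgn(\lam)(n_j-m_j)$ is a clean shortcut: the Wronskian trick $u''v-uv''=(u'v-uv')'$ is precisely what collapses the two $|\lam|^{-1}$ second-derivative contributions into a total $z_j$-derivative, and the cancellation with the $|\lam|$ from the $M_j^2$ terms is exact. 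This makes the uniformity in $\lam$ transparent in one stroke, whereas the shift-operator route requires assembling four pieces. Both approaches yield the same constants up to harmless factors; yours is arguably more economical.

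One small cosmetic point: when you write ``summing on $j$ delivers the $(1+|n-m|)^p$ factor'', you are implicitly using $(1+\sum_j|n_j-m_j|)^p\leq C_{p,d}\sum_j(1+|n_j-m_j|^p)$, which is fine but could be stated once.
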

The   decay inequality \eqref{eq:decay} prompts us 
 to  endow the set $\wt\H^d$ with the following distance~$\wh d$:
\beq
\label {defindistancewtH}
\wh d(\wh w,\wh w') \eqdefa \bigl|\lam(n+m)-\lam'(n'+m')\bigr|_1 +\bigl |(n-m)-(n'-m')|_1+|\lam-\lam'|,
\eeq
where~$|\cdot|_1$ denotes the~$\ell^1$ norm on~$\R^d$. 
\medbreak
The second basic property we expect for  the Fourier transform  is that it changes decay properties into regularity. This is closely related to  how it acts on suitable weight functions. As in the Euclidean case, we expect $\cF_\H$ to transform multiplication by  weight functions into a   combination of derivatives,
so  we need  a   definition   of differentiation  for functions defined on $\wt\H^d$
that could fit the  scope. This  is the aim of  the following definition (see also Proposition \ref{actionX_jonFH} in Appendix): 
\begin{definition} 
{\sl  For any function~$\theta:\wt\H^d\to\C$ we define
 \begin{multline}
  \label {decayWignerHermiteeq1}
  \wh \D \theta(\wh w) \eqdefa - \frac 1{2|\lam|} ( |n+m| +d) \theta(\wh w) \\
+\frac 1 {2|\lam|} \sum_{j=1} ^d \Bigl ( \sqrt {(n_j+1) (m_j+1)}\, \theta(\wh w_j^+) 
+\sqrt {n_jm_j}\, \theta(\wh w_j^-)\Bigr)
 \end{multline}
and, if in addition $\theta$ is differentiable with respect to $\lambda,$ 
 \begin{equation}\label {eq:whDlambda}
\wh\cD_\lam \theta(\wh w)  \eqdefa  \frac {d\theta} {d\lam} (\wh w)+ \frac d {2\lam} \theta(\wh w)\\
+\frac 1{2 \lam}\sum_{j=1}^d \Bigl(
\sqrt {n_jm_{j} }\,  \theta(\wh w_j^-)-\sqrt {(n_j\!+\!1)(m_j\!+\!1)} \, \theta(\wh w_j^+) \Bigr)
\end{equation}
where~$\wh w_j^\pm\eqdefa(n\pm\delta_j,m\pm\delta_j,\lam)$ and~$\d_j$ denotes the element of~$\N^d$  with all  components equal to~$0$ except the~$j$-th which has value~$1$.}
\end{definition}
The notation in the above definition  is justified by the following lemma 
that will be proved in  Subsection \ref{Regularity}.
 \begin{lemma}
 \label {decaygivesregul1}
{\sl Let~$M^2$ and~$M_0$ be the multiplication operators defined on $\cS(\H^d)$ by
 \begin{equation}
 \label {weight}
(M^2f)(Y,s)\eqdefa|Y|^2f(Y,s)\quad\hbox{and}\quad
M_0f(Y,s)\eqdefa-isf(Y,s).
 \end{equation}
Then for all~$f$  in~$\cS(\H^d),$  the following two 
relations hold true on $\wt\H^d$:
$$
\cF_{\H} M^2 f=  -\wh \D\cF_\H f\quad\hbox{and}\quad 
 \cF_\H(M_0 f)  =   \wh\cD_\lam \cF_\H f.
 $$
 }
 \end{lemma}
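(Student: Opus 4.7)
The plan is to reduce both claimed identities to pointwise identities on the Wigner-type kernel $\cW(\wh w,Y)$ from \eqref{definWigner}, and then to verify these by Hermite recurrences and integration by parts in the $z$-variable. Starting from \eqref{definFourierWigner} and observing that $\wh\D$ and $\wh\cD_\lam$ act only in $(n,m,\lam)$ and therefore commute with integration in $(Y,s)$ for any $f\in\cS(\H^d)$, the claim $\cF_\H M^2 f = -\wh\D\,\cF_\H f$ amounts to
\[|Y|^2\cW(\wh w,Y)=\frac{|n+m|+d}{2|\lam|}\,\cW(\wh w,Y)-\frac{1}{2|\lam|}\sum_{j=1}^d\!\Bigl[\sqrt{(n_j+1)(m_j+1)}\,\cW(\wh w_j^+,Y)+\sqrt{n_jm_j}\,\cW(\wh w_j^-,Y)\Bigr],\]
whereas, after writing $(-is)e^{-is\lam}=\partial_\lam e^{-is\lam}$, differentiating under the integral sign, and cancelling the $\partial_\lam\cF_\H f$ terms appearing on both sides, $\cF_\H(M_0 f)=\wh\cD_\lam\,\cF_\H f$ amounts to
\[\partial_\lam\cW(\wh w,Y)=-\frac{d}{2\lam}\,\cW(\wh w,Y)+\frac{1}{2\lam}\sum_{j=1}^d\!\Bigl[\sqrt{(n_j+1)(m_j+1)}\,\cW(\wh w_j^+,Y)-\sqrt{n_jm_j}\,\cW(\wh w_j^-,Y)\Bigr].\]

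To prove these, I would perform the change of variables $z'=|\lam|^{1/2}z$ together with $y'=|\lam|^{1/2}y$ and $\eta'=|\lam|^{1/2}\eta$ in \eqref{definWigner}; this yields $\cW(\wh w,Y)=\cW_0(n,m,\sgn\lam;y',\eta')$ for a kernel $\cW_0$ independent of $|\lam|$. Then $|Y|^2=(|y'|^2+|\eta'|^2)/|\lam|$ and $\partial_\lam\cW=(2\lam)^{-1}\sum_j(y'_j\partial_{y'_j}+\eta'_j\partial_{\eta'_j})\cW_0$, so both identities become $|\lam|$-independent statements about $\cW_0$. For the second (shorter) identity, I apply the Euler operator $\sum_j(y'_j\partial_{y'_j}+\eta'_j\partial_{\eta'_j})$ to $\cW_0$, convert each $\eta'_j$ factor into $z'_j$ via $2i\sigma\eta'_je^{2i\sigma\langle\eta',z'\rangle}=\partial_{z'_j}e^{2i\sigma\langle\eta',z'\rangle}$ and integration by parts in $z'_j$, substitute $u=y'+z'$, $v=-y'+z'$, and invoke the first-order Hermite recurrences $u_jH_n=2^{-1/2}(\sqrt{n_j+1}\,H_{n+\delta_j}+\sqrt{n_j}\,H_{n-\delta_j})$ and $\partial_jH_n=2^{-1/2}(\sqrt{n_j}\,H_{n-\delta_j}-\sqrt{n_j+1}\,H_{n+\delta_j})$. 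A pleasant cancellation then eliminates the off-diagonal shifts $(n\pm\delta_j,m\mp\delta_j)$, leaving only the diagonal shifts $(n\pm\delta_j,m\pm\delta_j)$ predicted by the right-hand side.

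For the first identity, I would proceed analogously: write $(y'_j)^2=(u_j-v_j)^2/4$ to split it into $u_j^2$, $v_j^2$ and $u_jv_j$ contributions; handle $(\eta'_j)^2$ by two successive integrations by parts in $z'_j$, producing $\partial_j^2H_n$ and cross-derivative terms $(\partial_jH_n)(\partial_jH_m)$; and apply the second-order Hermite recurrences to expand everything in the basis $\{H_{n+\alpha}(u)H_{m+\beta}(v)\}$ with $\alpha,\beta\in\{0,\pm\delta_j,\pm2\delta_j\}$. Summing the $(y'_j)^2$ and $(\eta'_j)^2$ contributions, the $\pm2\delta_j$ shifts and the off-diagonal $(n\pm\delta_j,m\mp\delta_j)$ shifts all cancel, leaving precisely the diagonal shifts of the target, while the $(n,m)$ coefficients collapse to $(|n|+|m|+d)/(2|\lam|)$ after summation over $j$. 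The conceptual content is routine Hermite algebra; the main practical obstacle is the bookkeeping of the many shifted kernels arising in the first identity (sixteen versus nine in the second), and the verification that the non-diagonal shifts always appear with opposite signs in the various contributions, so as to vanish.
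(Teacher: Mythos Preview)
Your approach is correct and essentially the same as the paper's: both reduce the lemma via \eqref{definFourierWigner} to the two kernel identities for $\cW$ that you display (these are Lemmas~\ref{Y2WignerHermite} and~\ref{decayWignerHermite} in the paper), and both verify them by integration by parts in $z$ combined with the Hermite recurrences~\eqref{relationsHHermiteCAb}. For the $|Y|^2$ identity the paper takes a small shortcut that avoids your sixteen-term bookkeeping: rather than expanding $(y'_j)^2$ and $(\eta'_j)^2$ separately into second-order Hermite shifts, it groups $|y|^2-\tfrac{1}{4\lam^2}\Delta_z$ as a single operator, uses $4|y|^2=|y+z|^2+|y-z|^2+2(y+z)\cdot(y-z)$ to recognise the harmonic oscillator $\Delta_{\rm osc}^\lam$ acting on each Hermite factor, and reads off the $(|n+m|+d)$ coefficient directly from~\eqref{relationsHHermite}, so that only the first-order cross terms $\partial_jH_n\,\partial_jH_m+M_jH_n\,M_jH_m$ remain to be expanded.
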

 
 The third important  aspect of regularity for functions in $\cF_\H(\cS(\H^d))$  is the link between their  values for positive $\lam$ and negative~$\lam$.
 That property, that has no equivalent in the Euclidean setting,  
  is described by the following lemma:
 \begin{lemma}
\label {symmetry0} 
{\sl Let us consider on ~$\cS(\H^d)$    the operator~$\cP$ defined by
\begin{equation}\label{eq:cP}
\cP(f) (Y,s) \eqdefa \frac 12 \int_{-\infty} ^s \bigl(f(Y,s') -f(Y,-s')\bigr) ds'.
\end{equation}
Then   $ \cP$ maps continuously~$\cS(\H^d)$ to~$\cS(\H^d)$ and we have for any~$f$ in~$\cS(\H^d)$ and~$\wh w$ in~$\wt\H^d,$
\begin{equation}\label{eq:lam=0}
2 i \cF_\H (\cP f) = \wh\Sigma_0 (\cF_\H f ) \with 
(\wh\Sigma_0 \theta) (\wh w) \eqdefa \frac {\theta (n,m,\lam) -(-1) ^{|n+m|} \theta (m,n,-\lam)} \lam\,\cdotp
\end{equation}
}
\end{lemma}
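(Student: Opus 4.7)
The plan is to split the statement into three parts: the Schwartz continuity of $\cP$, the computation of $\cF_\H \cP f$ via the partial Fourier transform in the vertical variable, and a parity symmetry for the Wigner-type kernel $\cW$.

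For the first part, I set $g(Y,s)\eqdefa \frac12(f(Y,s)-f(Y,-s))$, which is Schwartz and \emph{odd} in $s$, so $\int_\R g(Y,s')\,ds'=0$ for every $Y$. Hence $\cP f(Y,s)=\int_{-\infty}^s g(Y,s')\,ds'=-\int_s^{+\infty} g(Y,s')\,ds'$, and using one or the other representation yields rapid decay of $\cP f$ as $s\to \mp\infty$. Since $\partial_s \cP f=g$ and $Y$-derivatives commute with the $s$-integration, every Schwartz seminorm of $\cP f$ is controlled by some seminorm of $f$, giving continuity.

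For the second part, let $\wt f(Y,\lam)\eqdefa \int_\R e^{-is\lam}f(Y,s)\,ds$ denote the partial Fourier transform in $s$. Then formula\refeq{definFourierWigner} reads
$$
\cF_\H f(n,m,\lam)=\int_{\R^{2d}}\overline{\cW(n,m,\lam,Y)}\,\wt f(Y,\lam)\,dY.
$$
Because $\cP f$ is the primitive of $g$ vanishing at $\pm\infty$, integration by parts gives $\wt{\cP f}(Y,\lam)=\wt g(Y,\lam)/(i\lam)$; the vanishing of $\wt g(Y,0)$, itself a consequence of the odd parity of $g$, makes the division licit. Since $\wt g(Y,\lam)=\frac12\bigl(\wt f(Y,\lam)-\wt f(Y,-\lam)\bigr)$, one deduces
$$
2i\,\wt{\cP f}(Y,\lam)=\frac{\wt f(Y,\lam)-\wt f(Y,-\lam)}\lam\,\cdotp
$$

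For the third part, the key algebraic ingredient is the parity relation
$$
\cW(n,m,\lam,Y)=(-1)^{|n+m|}\,\cW(m,n,-\lam,Y).
$$
In formula\refeq{definWigner}, I first observe that $H_{n,-\lam}=H_{n,\lam}$ since the rescaled Hermite function depends only on $|\lam|$; then, substituting $(n,m,\lam)\mapsto(m,n,-\lam)$ and performing the change of variable $z\mapsto -z$, the parity $H_{n,\lam}(-x)=(-1)^{|n|}H_{n,\lam}(x)$ of the Hermite functions produces the prefactor $(-1)^{|n|+|m|}$. Combining this relation with the identity of the second step and the definition of $\cF_\H$ gives
$$
2i\cF_\H(\cP f)(\wh w)=\int_{\R^{2d}}\overline{\cW(\wh w,Y)}\,\frac{\wt f(Y,\lam)-\wt f(Y,-\lam)}\lam\,dY=\frac{\cF_\H f(n,m,\lam)-(-1)^{|n+m|}\cF_\H f(m,n,-\lam)}\lam\,\cdotp
$$
The expected obstacle is the careful bookkeeping in the parity identity for $\cW$; everything else is routine once the cancellation at $\lam=0$ guaranteed by the oddness of $g$ in $s$ is in place.
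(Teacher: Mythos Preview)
Your proof is correct and follows essentially the same route as the paper: both reduce to the identity $2i\lam\,\cF_s(\cP f)(Y,\lam)=\cF_s f(Y,\lam)-\cF_s f(Y,-\lam)$ and then combine it with the parity relation\refeq{eq:symW} for~$\cW$ (which the paper simply quotes, while you supply the short verification). The only genuine variation is in the continuity of~$\cP$: you argue on the physical side via the two representations of the primitive of the odd function~$g$, whereas the paper works on the Fourier side, writing $i\cF_s(\cP f)(Y,\lam)$ as $\frac{1-\chi(\lam)}{2\lam}(\cF_sf(Y,\lam)-\cF_sf(Y,-\lam))+\chi(\lam)\int_0^1(\partial_\lam\cF_sf)(Y,-\lam+2t\lam)\,dt$ for a cutoff~$\chi$ near~$0$; both arguments are elementary and yield the same conclusion.
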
 
The above weird relation  is just a consequence of  the following  property of the Wigner transform $\cW$: \begin{equation}\label{eq:symW}
\forall (n,m,\lam,Y) \in \wt \H^d\times T^\star \R^d\,,\   \cW(n,m,\lam,Y)=(-1)^{|n+m|}\cW(m,n,-\lam,Y).
\end{equation}
In the case $m=n,$ it means that 
the left and right limits at~$\lam=0$ of functions in $\cF_\H(\cS(\H^d))$ must be  the same.

\begin{definition}
\label {definrangeS}
{\sl
We define~$\cS(\wt \H ^{d})$ to be the set of  functions~$\theta$ on~$\wt \H^d$ such that:
\begin{itemize}
\item 
for any~$(n,m)$ in~$\N^{2d}$, the map~Ê$\lam\longmapsto \theta(n,m,\lam)$ is  smooth  on~$\R\setminus\{0\}$,
\item
for any non negative integer~$N$, the functions~$\wh \D^N\theta$,~$\wh\cD_\lam^N\theta$ and~$\wh \Sigma_0\wh\cD_\lam^N\theta$  decay faster than any power of~$\wh d_0(\wh w)\eqdefa |\lam| (|n+m|_1 +d) +|m-n|_1$.
\end{itemize}
We equip $\cS(\wt\H^d)$  with the family of semi-norms
$$
\|\theta \|_{N,N'\!,\cS(\wt \H^d)} \eqdefa \sup_{\wh w\in \wh\H^d} \bigl( 1+\wh d_0(\wh w)\bigr) ^N
\Bigl(  |\wh \D^{N'}  \theta (\wh w)|+  |\wh \cD_\lam^{N'} \theta (\wh w)| + |\wh \Sigma_0\wh \cD_\lam^{N'}\theta (\wh w)|\Bigr)\cdotp
$$
}
\end{definition}

Let us first point out that  an integer~$K$ exists such that 
\begin{equation}
\label{control}
\|\theta\|_{L^1(\wt\H^d)}\leq  C\|\theta\|_{K,0,\cS(\wt\H^d)}.
\end{equation}

The main motivation of this definition is the following isomorphism theorem.
\begin{theorem}
\label {MappingofPH}
{\sl 
The Fourier transform $\cF_\H$ is a bicontinuous isomorphism between~$\cS(\H^{d})$ and~$\cS(\wt \H^d),$ 
and the inverse map is given by
\beq\label {MappingofPHdemoeq1b}
\wt\cF_\H \theta(w)\eqdefa \frac {2^{d-1}}  {\pi^{d+1} }   \int_{\wt \H^d} 
e^{is\lam} \cW(\wh w, Y)\theta (\wh w) \, d\wh w.
\eeq}
\end{theorem}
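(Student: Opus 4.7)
The plan is to establish the theorem by combining the three key structural lemmas (\ref{decaylambdan}, \ref{decaygivesregul1}, \ref{symmetry0}) with the inversion formula from Theorem~\ref{inverseFourier-Plancherel}. I would split the argument into four steps: (i)~$\cF_\H$ sends $\cS(\H^d)$ continuously into $\cS(\wt\H^d)$; (ii)~$\wt\cF_\H$ is well defined on $\cS(\wt\H^d)$ and sends it continuously into $\cS(\H^d)$; (iii)~$\wt\cF_\H\circ\cF_\H=\Id_{\cS(\H^d)}$; (iv)~$\cF_\H\circ\wt\cF_\H=\Id_{\cS(\wt\H^d)}$.

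For step (i), given $f\in\cS(\H^d)$, the identities of Lemma~\ref{decaygivesregul1} give by iteration
\[
\wh\D^{N'}\cF_\H f=(-1)^{N'}\cF_\H\bigl((M^2)^{N'} f\bigr),\qquad
\wh\cD_\lam^{N'}\cF_\H f=\cF_\H\bigl(M_0^{N'} f\bigr),
\]
while Lemma~\ref{symmetry0} gives $\wh\Sigma_0\wh\cD_\lam^{N'}\cF_\H f=2i\,\cF_\H(\cP M_0^{N'} f)$. Since $M^2$, $M_0$ and $\cP$ map $\cS(\H^d)$ continuously into itself, Lemma~\ref{decaylambdan} applied to $(M^2)^{N'} f$, $M_0^{N'}f$ and $\cP M_0^{N'}f$ produces the decay in $1+\wh d_0(\wh w)$ required by Definition~\ref{definrangeS}, with continuous control by a suitable Schwartz semi-norm of $f$.

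For step (ii), that $\wt\cF_\H\theta$ is well defined and bounded follows from \eqref{control}, which ensures $\theta\in L^1(\wt\H^d)$, together with the uniform bound $|\cW(\wh w,Y)|\leq C$ (which can be recovered from the formula \eqref{definWigner} or from $\|H_{n,\lam}\|_{L^2}=1$). To prove Schwartz regularity, I would establish the dual versions of Lemmas~\ref{decaygivesregul1} and~\ref{symmetry0}, namely
\[
M^2\,\wt\cF_\H\theta=-\wt\cF_\H(\wh\D\theta),\quad M_0\,\wt\cF_\H\theta=\wt\cF_\H(\wh\cD_\lam\theta),
\]
plus analogous identities for $\partial_Y$ and $\partial_s$ that convert differentiation in $w$ into multiplication by appropriate polynomials in $(n,m,\lam)$ on the Fourier side (using \eqref{FourierdiagDeltaHfond} and the action of left-invariant fields on $\cW$ recalled in Appendix via Proposition~\ref{actionX_jonFH}). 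Iterating these relations bounds $\|(1+|Y|^2+s^2)^{N/2}\,\partial^\alpha_{Y,s}\wt\cF_\H\theta\|_{L^\infty}$ by a finite sum of semi-norms $\|\theta\|_{N_1,N_2,\cS(\wt\H^d)}$.

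Once (i)–(ii) are in place, step (iii) is exactly the inversion formula \eqref{MappingofPHdemoeq1} of Theorem~\ref{inverseFourier-Plancherel}. For step (iv), given $\theta\in\cS(\wt\H^d)$, set $f\eqdefa\wt\cF_\H\theta\in\cS(\H^d)$; then $\cF_\H f\in\cS(\wt\H^d)$ by step (i), and applying $\wt\cF_\H$ to both $\cF_\H f$ and $\theta$ yields the same function $f$ by step (iii), whence $\cF_\H f=\theta$ by injectivity of $\wt\cF_\H$ on $\cS(\wt\H^d)$ (which follows from the Fourier–Plancherel identity \eqref{inverseFouriereq2} by approximation).

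The main obstacle is step (ii), and within it the justification of the key identity $M_0\wt\cF_\H\theta=\wt\cF_\H(\wh\cD_\lam\theta)$. Formally this is an integration by parts in $\lam$ exploiting $s\,e^{is\lam}=-i\partial_\lam e^{is\lam}$, but the measure $|\lam|^d\,d\lam$ is not smooth at $\lam=0$ and the operator $\wh\cD_\lam$ contains the singular term $\frac{d}{2\lam}\theta$ plus the shift terms $\sqrt{n_jm_j}\,\theta(\wh w_j^{-})-\sqrt{(n_j{+}1)(m_j{+}1)}\,\theta(\wh w_j^{+})$. The boundary contribution at $\lam=0$ is precisely what the $\wh\Sigma_0$ condition in Definition~\ref{definrangeS} kills: the vanishing of $\wh\Sigma_0\wh\cD_\lam^{N'}\theta$ as $|\lam|\to 0$ encodes the parity identity \eqref{eq:symW} satisfied by $\cW$, so that the two half-line integrations by parts fit together without a jump. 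Carrying out this analysis carefully, using the explicit action of the creation/annihilation operators on $H_{n,\lam}$ to translate the shifts $n\pm\delta_j$ into the differential action on $\cW$, is what makes the inverse direction nontrivial.
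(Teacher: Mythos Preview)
Your proposal is correct and follows essentially the same route as the paper: establish $\cF_\H:\cS(\H^d)\to\cS(\wt\H^d)$ via Lemmas~\ref{decaylambdan}, \ref{decaygivesregul1}, \ref{symmetry0}; then prove $\wt\cF_\H:\cS(\wt\H^d)\to\cS(\H^d)$ by the dual identities $M^2\wt\cF_\H\theta=-\wt\cF_\H(\wh\D\theta)$, $M_0\wt\cF_\H\theta=\wt\cF_\H(\wh\cD_\lam\theta)$, and $\D_\H\wt\cF_\H\theta=-\wt\cF_\H(\wh M\theta)$; finally close with Theorem~\ref{inverseFourier-Plancherel}. You also pinpoint the only genuinely delicate step---the integration by parts in $\lam$ with the boundary contribution at $\lam=0$ controlled by the $\wh\Sigma_0$ condition---exactly as the paper does.

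Two small remarks. First, the paper works with the $L^2$-based semi-norms $\|f\|_{K,\cS(\H^d)}^2=\|f\|_{L^2}^2+\|M_\H^Kf\|_{L^2}^2+\|\D_\H^Kf\|_{L^2}^2$ (shown equivalent to the usual ones in Proposition~\ref{p:schwartz}), rather than the $L^\infty$ semi-norms you propose; this lets the Fourier--Plancherel identity convert each dual relation directly into an $L^2$ bound. Your $L^\infty$ route would also work (via $\theta\in L^1(\wt\H^d)$ and $|\cW|\leq1$), but the $L^2$ choice is marginally cleaner. Second, your phrasing ``vanishing of $\wh\Sigma_0\wh\cD_\lam^{N'}\theta$ as $|\lam|\to0$'' slightly misstates the mechanism: what is needed is that $\wh\Sigma_0\theta$ has fast decay in $(n,m)$ uniformly in $\lam$, so that $\e^{d+1}\sum_{n,m}|(\wh\Sigma_0\theta)(n,m,\e)|\to0$; the paper's bound $\sum_{n,m}|\theta(n,m,\e)|\leq C\e^{-d}$ makes this explicit.
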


The definition of $\cS(\wt\H^d)$ encodes a number of  nontrivial hidden informations  that are partly consequences of the sub-ellipticity of $\D_\H.$ For instance, the stability of~$\cS(\wt \H^d)$ by the multiplication law defined in\refeq {newFourierconvoleq1} is an obvious consequence of the stability of~$\cS(\H^d)$ by convolution and of Theorem\refer {MappingofPH}. Another hidden information is the behavior of functions of~$\cS(\wt \H^d)$ when~$\lam$ tends to~$0$. 
In  fact, Achille's heel  of the metric space~$(\wt\H^d,\wh d)$ is that it is not complete. 
It turns out however that  the Fourier transform of any integrable function on~$\H^d$
is  \emph{uniformly continuous} on  $\wt\H^d.$ Therefore,  it is natural to 
extend it to the  completion~$\wh\H^d$ of~$\wt\H^d.$ This is explained in greater details
in the following statement that has been proved in\ccite {bcdFHspace}. 
\begin{theorem}
\label {FourierL1basicbis}
{\sl 
The completion of~$(\wt \H^d, \wh d)$ is the metric space~$(\wh \H^d,\wh d)$ defined by
$$
\wh \H^d\eqdefa  \wt\H^d \cup \wh \H^d_0 \with \wh \H^d_0 \eqdefa {\R_{\mp}^d}\times \Z^d
\andf
{\R_{\mp}^d}\eqdefa (\R_-)^d\cup (\R_+)^d.
$$
Moreover, on $\wh\H^d,$ the extended distance (still denoted by~$\wh d$) is given 
for all $\wh w=(n,m,\lam)$ and~$\wh w'=(n',m',\lam')$ in $\wt\H^d,$
and for all $(\dot x,k)$ and $(\dot x',k')$ in $\wh\H_0^d$ by
$$
\begin{aligned}
&\wh d(\wh w,\wh w') = \bigl|\lam(n+m)-\lam'(n'+m')\bigr|_1 +\bigl |(m-n)-(m'-n')|_1+|\lam-\lam'|,\\
&\wh d\bigl (\wh w, (\dot x, k)\bigr ) =  \wh d\bigl ((\dot x, k), \wh w \bigr )  \eqdefa |\lam(n+m)-\dot x|_1+ |m-n-k|_1+|\lam|, \\
&\wh d\bigl ((\dot x,k), (\dot x', k')\bigr )  = |\dot x-\dot x'|_1+|k-k'|_1.
\end{aligned}
$$

The Fourier transform $\wh f_\H$ of any integrable function on $\H^d$ may be extended continuously 
to the whole set $\wh\H^d.$ Still denoting by  $\wh f_\H$ (or $\cF_\H f$) that extension, 
the  linear map~$\cF_\H: f\mapsto \wh f_\H$ is continuous from  the space $L^1(\H^d)$  to the space 
$\cC_0(\wh\H^d)$  of continuous functions  on~$\wh \H^d$ tending to~$0$ at infinity. 
}
\end{theorem}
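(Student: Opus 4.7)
The natural starting point is the observation that $|\lam-\lam'|\leq \wh d(\wh w,\wh w')$, so any Cauchy sequence $(\wh w_\nu)=(n_\nu,m_\nu,\lam_\nu)$ in $\wt\H^d$ has $\lam_\nu$ Cauchy in $\R$. The analysis splits according to whether the limit $\lam$ is zero or not. If $\lam\neq 0$, then for large $\nu$ the vector $\lam_\nu(n_\nu+m_\nu)$ remains in a bounded set of $\R^d$, hence $n_\nu+m_\nu$ is bounded in $\N^d$; combined with the boundedness (and integrality) of $m_\nu-n_\nu$ in $\Z^d$, this forces $(n_\nu,m_\nu)$ to be eventually constant, so $(\wh w_\nu)$ converges in $\wt\H^d$ itself. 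If $\lam_\nu\to 0$, the sign of $\lam_\nu$ stabilizes (otherwise $\lam_\nu(n_\nu+m_\nu)$ could not be Cauchy unless it tends to $0$), the vector $\lam_\nu(n_\nu+m_\nu)$ converges to some $\dot x\in\R^d$ whose components all share this common sign (since each $\lam_\nu n_{\nu,j}$ and $\lam_\nu m_{\nu,j}$ does), so $\dot x\in\R_-^d\cup\R_+^d$; meanwhile $m_\nu-n_\nu$ stabilizes to some $k\in\Z^d$, and the abstract limit corresponds to $(\dot x,k)\in\wh\H_0^d$. The three formulas for $\wh d$ on $\wh\H^d$ then appear as the limits of the original formula, and the triangle inequality is a direct check. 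Density is obtained by producing explicit approximants: given $(\dot x,k)\in\wh\H_0^d$ with say $\dot x\in\R_+^d$, choose $\lam_\nu=1/\nu$, set $n_{\nu,j}=\lfloor(\dot x_j/\lam_\nu-k_j)/2\rfloor$ and $m_{\nu,j}=n_{\nu,j}+k_j$.

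\textbf{Continuous extension.} The extension part rests on a single key fact: for every $f\in L^1(\H^d)$, the map $\wh f_\H$ is uniformly continuous on $(\wt\H^d,\wh d)$. Once this is granted, the general theory of metric completions provides a unique continuous extension to $\wh\H^d$, and the continuity of $\cF_\H\colon L^1(\H^d)\to L^\infty(\wh\H^d)$ is immediate from the trivial pointwise bound $|\wh f_\H(\wh w)|\leq \|f\|_{L^1}$. To verify that the extension lies in $\cC_0(\wh\H^d)$, I would invoke the density of $\cS(\H^d)$ in $L^1(\H^d)$: for Schwartz $f$, Lemma \ref{decaylambdan} yields rapid decay of $\wh f_\H$ in $|\lam|(|n|+|m|+d)+|n-m|$, a quantity that dominates the distance in $\wh\H^d$ to any fixed base point (through both the $\wt\H^d$ part of the distance and, at the boundary $\wh\H^d_0$, through $|\dot x|_1+|k|_1$). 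This vanishing at infinity transfers to arbitrary $f\in L^1$ via the uniform bound $\|\wh f_\H-\wh g_\H\|_\infty\leq \|f-g\|_{L^1}$.

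\textbf{Main obstacle.} The technical heart is the uniform continuity of $\wh f_\H$ on $\wt\H^d$. The subtle point is that $\wh d(\wh w,\wh w')$ being small does not control the individual indices $n,n',m,m'$ (which may be huge), but only the weighted quantities $\lam(n+m)$ and $m-n$. The strategy is first to reduce to $f\in\cS(\H^d)$ by density — uniform equicontinuity of a dense family of approximants transfers to the whole space via $\|\wh f_\H-\wh g_\H\|_\infty\leq \|f-g\|_{L^1}$ — and then for Schwartz $f$ to use Lemma \ref{decaylambdan} to confine the effective support of $\wh f_\H$ to a region where $|\lam|(|n|+|m|+d)+|n-m|$ is bounded. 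On that region, the difference $\wh f_\H(\wh w)-\wh f_\H(\wh w')$ is estimated through the integral representation \eqref{definFourierWigner} by controlling the variation of the kernel $e^{is\lam}\cW(\wh w,Y)$; the necessary estimates follow from the classical recurrence relations for the $H_{n,\lam}$ under the creation and annihilation operators, which are precisely the relations that underlie the discrete shift operators appearing in $\wh\D$ and $\wh\cD_\lam$.
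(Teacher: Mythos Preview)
The paper does not prove this theorem in-text; it is imported from the companion work\ccite{bcdFHspace}, so there is no local argument to compare against. Your outline follows the natural route one would expect from that reference: analyze Cauchy sequences to identify the completion, then prove uniform continuity of $\wh f_\H$ and extend.

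Your treatment of the completion is essentially correct. One small repair: in your density construction, the formula $n_{\nu,j}=\lfloor(\nu\dot x_j-k_j)/2\rfloor$ can return a negative integer when $\dot x_j=0$ and $k_j>0$, so the edge case $\dot x_j=0$ should be handled separately (for instance by setting $n_{\nu,j}=\max(0,-k_j)$ and $m_{\nu,j}=\max(0,k_j)$ there).

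The extension argument has a genuine gap at the point you yourself label the ``main obstacle''. Your reduction to Schwartz $f$ together with the decay localization via Lemma~\ref{decaylambdan} is sound, and the $\cC_0$ conclusion follows as you say. But the crucial estimate
\[
\bigl|e^{is\lam}\cW(\wh w,Y)-e^{is\lam'}\cW(\wh w',Y)\bigr|\ \lesssim\ \omega\bigl(\wh d(\wh w,\wh w')\bigr)
\]
uniformly on $\{|\lam|(|n+m|+d)+|n-m|\leq K\}$ is not carried out, and it is not a routine consequence of the recurrence relations you invoke. The difficulty is that two points $(n,m,\lam)$ and $(n',m',\lam')$ may be $\wh d$-close while $(n,m)$ and $(n',m')$ are arbitrarily far apart in $\N^{2d}$ (take $\lam\to0$ with $\lam n$ fixed), so one cannot simply interpolate in the discrete indices. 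What is actually needed --- and what\ccite{bcdFHspace} supplies --- is a representation of $\cW(\wh w,Y)$ (or a direct modulus-of-continuity bound on $\wh f_\H$) in which the dependence on $\wh w$ factors explicitly through the variables $\lam(n+m)$, $m-n$ and $\lam$ rather than through $(n,m,\lam)$ separately; this is how the metric $\wh d$ enters the estimate. The argument in the proof of Lemma~\ref{lemmaproofexamplepartiefinie} (see \eqref{continuityRealTopodemoeq0}) gives the flavor of such a bound near $\wh 0$, and a similar computation underlies the full uniform continuity. Without this step, your sketch does not yet close.
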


It is now natural to introduce the space~$\cS(\wh\H^d)$.
 \begin{definition}
\label  {definrangeS+} 
{\sl
We denote by~$\cS(\wh \H^d)$ the space of functions on~$\wh\H^d$  which
are continuous extensions of elements of~$\cS(\wt \H^d)$.
}
 \end{definition}

As  an  elementary exercise of functional analysis, the reader can  prove that~$\cS(\wh \H^d)$
endowed with the semi-norms $\|\cdot\|_{N,N'\!,\cS(\wt \H^d)}$ is a Fr\'echet space.
Those semi-norms will be denoted by~$\|\cdot\|_{N,N'\!,\cS(\wh \H^d)}$ in all that follows.

Note also that for any function $\theta$ in $\cS(\wh\H^d),$  having  $\wh w$ tend to $(\dot x,k)$ 
in \eqref{eq:lam=0} yields 
\begin{equation}\label{eq:pmlambda}
\theta(\dot x,k)=(-1)^{|k|}\theta(-\dot x,-k).
\end{equation}

As regards convolution, we   obtain, after  passing to the limit in \eqref {newFourierconvoleq1}, the following noteworthy   formula, valid for any  two functions~$f$ and~$g$ in~$L^1(\H^d)$:
\beq  \label{convlimit}
\begin{aligned}
\cF_\H(f\star g)_{\wh\H^d_0} &= (\cF_\H f)_{\wh\H^d_0}\prodHO  (\cF_\H g)_{\wh\H^d_0}\with\\
(\theta_1\prodHO  \theta_2)(\dot x,k)  & \eqdefa \sum_{k'\in\Z^d} \theta_1(\dot x,k')\, \theta_2(\dot x,k-k').
\end{aligned}
\eeq

\begin{remark}\label{rk: convlim} {\sl
Let us emphasize  that the above   product law\refeq{convlimit} is commutative even though convolution of
functions on the Heisenberg group \emph{is not} (see\refeq {newFourierconvoleq1}).}
\end{remark}

A natural question then is how to extend the measure $d\wh w$ to $\wh\H^d.$ In fact, we have
 for any positive real numbers~$R$ and~$\e$,
\beno
\int_{\wh \H^d} {\bf 1}_{\{|\lam|\,|n+m|+|m-n|\leq R\}}  {\bf 1} _{|\lam|\leq \e} \,d\wh w &  = & 
\int_{-\e} ^\e \Bigl(\sum_{n,m}    {\bf 1}_{\{|\lam|\,|n+m|+|m-n|\leq R\}}\Bigr) |\lam|^d \,d\lam\\
 &\leq & CR^{2d}\int_{-\e} ^\e\, d\lam\\
 &\leq & CR^{2d}\e.
\eeno
Therefore, one  can extend the measure~$d\wh w$ on~$\wh \H^d$ simply by defining, for any continuous compactly supported function~$\theta$ on~$\wh\H^d$
$$
\int_{\wh \H^d}  \theta (\wh w)\,d\wh w \eqdefa  \int_{\wt \H^d}  \theta (\wh w)\,d\wh w.
$$

At this stage of the paper, pointing out nontrivial examples of functions of~$\cS(\wh \H^d)$
is highly informative.  
 To this end, we  introduce the set~$\cS^+_d$  of smooth functions~$f$ on~$[0,\infty[^d\times \ZZ^d\times\R$ such that for  any integer~$p$, we have
\begin{equation}\label {definprofileSwhH+} 
\supetage{(x_1,\cdots,x_d,k,\lam) \in [0,\infty[^d\times\ZZ^d\times\R}{|\al|\leq p} \bigl( 1+ x_1+\cdots +x_d+|k|\bigr)^p
\bigl |\partial^\alpha_{x,\lam} f (x_1,\cdots,x_d,k,\lam)\bigr|<\infty.
\end{equation}
As  may be easily checked by the reader,  the space~$\cS^+_d$ is stable by derivation and   multiplication by polynomial
functions of~$(x,k)$.
\begin{theorem}
\label {radialtypeSdescribtion}
{\sl 
Let~$f$ be a function of~$\cS^+_d$.   Let us define for~$\wh w=(n,m,\lam)$ in~$\wt \H^d$,
$$
\Theta_f\bigl(\wh w)\eqdefa f\bigl (|\lam|R(n,m) ,m-n,\lam\bigr)\with R(n,m)\eqdefa (n_j+m_j+1)_{1\leq j\leq d}.
$$
 Then~$\Theta_f$ belongs to~$\cS(\wh\H^d)$ if 
 \begin{itemize}
 \item either~$f$ is supported in~$[0,\infty[^d\times \{0\}\times\R,$
 \item or~$f$ is supported in~$[r_0,\infty[^d\times \ZZ^d\times \R$ for some positive real number~$r_0,$ and satisfies
 \begin{equation}\label{eq:fsym}
 f(x, -k, \lam) = (-1)^{|k|}f(x, k, \lam).\end{equation} 
 \end{itemize}  
}
\end{theorem}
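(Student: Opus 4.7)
The strategy is to verify the three defining properties of $\cS(\wh\H^d)$ for $\Theta_f$: smoothness in $\lam$ on $\wt\H^d$, rapid decay in $\wh d_0$ of $\wh\D^N\Theta_f$, $\wh\cD_\lam^N\Theta_f$ and $\wh\Sigma_0\wh\cD_\lam^N\Theta_f$, together with the existence of a continuous extension to $\wh\H^d_0$. It is convenient to reparametrize $\wt\H^d$ by $X := |\lam|R(n,m) \in [0,\infty)^d$ and $k := m-n \in \Z^d$, in which $\Theta_f(\wh w) = f(X,k,\lam)$ and $\wh d_0(\wh w) = |X|_1 + |k|_1$; the pointwise bound $|\Theta_f(\wh w)|\le C_p(1+\wh d_0)^{-p}$ is then immediate from $f\in\cS^+_d$. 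For the continuous extension, any sequence $(n_\nu,m_\nu,\lam_\nu)\in\wt\H^d$ converging to $(\dot x,k)\in\wh\H^d_0$ satisfies $\lam_\nu\to 0$, $m_\nu-n_\nu = k$ for $\nu$ large, and $|\lam_\nu|R(n_\nu,m_\nu)\to|\dot x|$, so by continuity of $f$, $\Theta_f(\wh w_\nu)\to f(|\dot x|,k,0)$, which defines $\Theta_f$ on $\wh\H^d_0$. The relation \eqref{eq:pmlambda} is trivial in Case~1 (where $f$ vanishes outside $\{k=0\}$) and is the value at $\lam=0$ of \eqref{eq:fsym} in Case~2.

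For the decay of $\wh\D\Theta_f$, I note that the shifts $\wh w_j^\pm$ translate $X$ by $\pm 2|\lam|\delta_j$ while leaving $k$ unchanged, and apply Taylor expansion of $f$ in the $X$-variable with integral remainder. Setting $A_\pm^j := \tfrac12\sqrt{(R_j\pm 1)^2 - k_j^2}$, the algebraic identities $(A_+^j - A_-^j)(A_+^j + A_-^j) = R_j$ and $R_j - A_+^j - A_-^j = O(k_j^2/R_j)$ show that the apparent $1/|\lam|$ singularity cancels, and $\wh\D\Theta_f$ becomes a finite linear combination of terms of the form $c(R,k,\lam)\int_0^1 \partial_x^\alpha f(X + s|\lam|v, k, \lam)\,\eta(s)\,ds$ with $|\alpha|\le 2$, $v$ bounded, $\eta$ smooth bounded, and $c$ of polynomial growth in $R,k$. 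The rapid decay of derivatives of $f\in\cS^+_d$ then yields $|\wh\D\Theta_f(\wh w)|\le C_p(1+\wh d_0)^{-p}$, and since this representation is stable under further application of $\wh\D$, iteration gives the estimate for $\wh\D^N\Theta_f$. The treatment of $\wh\cD_\lam\Theta_f$ is similar but more delicate: one has $\partial_\lam\Theta_f = \sgn(\lam)\sum_j R_j\,\partial_{x_j} f + \partial_\lam f$, and the unbounded term $\sgn(\lam)\sum_j R_j\,\partial_{x_j} f$ is exactly compensated by the leading order of $(1/2\lam)\sum_j[\sqrt{n_jm_j}\,\Theta_f(\wh w_j^-) - \sqrt{(n_j+1)(m_j+1)}\,\Theta_f(\wh w_j^+)]$ together with $(d/2\lam)\,\Theta_f$; after cancellation, $\wh\cD_\lam\Theta_f$ reduces to $\partial_\lam f$ plus bounded integral remainders, and iteration proceeds as before.

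For $\wh\Sigma_0\wh\cD_\lam^N\Theta_f$, the key input is the identity $f(X,-k,0) = (-1)^{|k|}f(X,k,0)$, which is trivial in Case~1 and is the value at $\lam = 0$ of \eqref{eq:fsym} in Case~2. Thus $\phi(X,k,\lam) := f(X,k,\lam) - (-1)^{|k|_1}f(X,-k,-\lam)$ vanishes at $\lam = 0$, so Taylor expansion in $\lam$ gives
\begin{equation*}
\phi(X,k,\lam) = \lam \int_0^1 \bigl[\partial_\lam f(X,k,t\lam) + (-1)^{|k|_1}\partial_\lam f(X,-k,-t\lam)\bigr]\,dt,
\end{equation*}
and the bracketed integrand still lies in $\cS^+_d$ with the same support and symmetry. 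Hence $\wh\Sigma_0\Theta_f = \phi/\lam$ is rapidly decaying in $\wh d_0$, and the same argument applied to the representation of $\wh\cD_\lam^N\Theta_f$ obtained above yields the required bound on $\wh\Sigma_0\wh\cD_\lam^N\Theta_f$.

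The main obstacle is the cancellation of the $1/|\lam|$ and $1/\lam$ singularities in $\wh\D\Theta_f$ and $\wh\cD_\lam\Theta_f$. Structurally, this cancellation must occur, since by Lemma~\ref{decaygivesregul1} the operators $\wh\D$ and $\wh\cD_\lam$ correspond via $\cF_\H$ to the bounded multiplication operators $-M^2$ and $M_0$ on $\cS(\H^d)$; but verifying it directly on $\Theta_f$ requires careful Taylor expansion in $X$ together with the asymptotics of $A_\pm^j$ indicated above. Once this algebraic cancellation is in hand, the iteration, the symmetry manipulations, and the $\wh\Sigma_0$ estimate are essentially routine.
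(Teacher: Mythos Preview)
Your approach is essentially the same as the paper's: both rely on Taylor-expanding $f$ in the $X$-variable (using that the shifts $\wh w_j^\pm$ move $X$ by $\pm 2|\lam|\delta_j$), checking that the $1/|\lam|$ singularity in $\wh\D$ and the $1/\lam$ singularity in $\wh\cD_\lam$ cancel algebraically, and then iterating. Your algebraic identities for $A_\pm^j$ are exactly the paper's expansion of $\alpha^\pm(p,q)=\sqrt{(p+1)(q+1)}\pm\sqrt{pq}$ in powers of $1/(p+q+1)$ and $(p-q)^2$, and your treatment of $\wh\Sigma_0$ via the symmetry \eqref{eq:fsym} at $\lam=0$ is in fact more explicit than what the paper spells out.

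The one organizational difference worth noting is how the induction is set up. You keep an integral remainder and assert that the resulting representation is ``stable under further application of $\wh\D$''. The paper instead introduces an $M$-equivalence relation $\theta\equivH{M}\theta'$ (meaning $|\theta-\theta'|\le C_N|\lam|^M(1+\wh d_0)^{-N}$), uses a \emph{finite} Taylor expansion to show that $\wh\D\Theta_f\equivH{2M-1}\sum_\ell\Theta_{f_\ell}$ and $\wh\cD_\lam\Theta_f\equivH{2M-1}\Theta_{\partial_\lam f}+\sum_\ell\Theta_{\wt f_\ell}$ for \emph{new} functions $f_\ell,\wt f_\ell\in\cS_d^+$ with the same support and symmetry properties, and then iterates by applying the same statement to each $\Theta_{f_\ell}$. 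Since each application of $\wh\D$ degrades the equivalence by two orders of $|\lam|$, choosing $M$ large enough for the desired number of iterations closes the argument. This makes the ``stability'' step completely transparent: one never has to re-analyze the cancellation on integral remainder terms with shifted coefficients $c(R\pm 2\delta_j,k,\lam)$, because the output is literally of the same form $\Theta_g$ as the input. Your integral-remainder formulation can be made rigorous along the same lines (absorb the $1/R_j$ factors into the $X$-dependence via $1/R_j=|\lam|/X_j$ on the support $X_j\ge r_0$), but the paper's bookkeeping is cleaner for the iteration.
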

An obvious consequence of   Theorem\refer {radialtypeSdescribtion} is  that the fundamental solution of the heat equation in~$\H^d$  belongs to~$\cS(\H^d)$  
(a highly nontrivial result that is usually   deduced from the explicit formula established by B. Gaveau in\ccite{gaveau}). 
Indeed, applying the Fourier transform with respect to the Heisenberg variable
 gives  that if~$u$ is the solution of the heat equation with integrable initial data~$u_0$ then
\begin{equation}\label{eq:heat}
\wh u_\H(t,n,m,\lam) = e^{-4t|\lam|(2|m|+d)} \wh u_0(n,m,\lam).
\end{equation}
At the same time, we have 
$$
u(t) =  u_0\star h_t \with h_t(y,\eta,s)= \frac{1}{t^{d+1}} h\Bigl(\frac{y}{\sqrt{t}}, \frac{\eta}{\sqrt{t}}, \frac s t\Bigr).
$$
Hence combining the convolution formula\refeq {newFourierconvoleq1} 
and  Identity \eqref{eq:heat}, we gather  that
$$
\wh h_\H (\wh w) = e^{-4|\lam|(2|n|+d)} {\bf 1}_{\{n=m\}}.
$$ 
Then  applying Theorem\refer {radialtypeSdescribtion}  to the function~$e^{-4(x_1+\cdots+x_d)}{\bf 1}_{\{k=0\}}$ 
ensures  that~$\wh h_\H$ belongs to~$\cS(\wh\H^d),$ and the inversion theorem \ref {MappingofPH} thus implies that 
$h$ is in $\cS(\H^d).$
\smallbreak

Along the same lines, we recover  Hulanicki's theorem
\cite{hula} in the case of the Heisenberg group, namely if $a$ belongs to $\cS(\R)$, then there exists a  function~$h_a$ in~$ {\mathcal S}(\H^d)$ such that
\begin{equation}\label{hultheorem}
 \forall f\in {\mathcal S}(\H^d),\;\; a(-\D_{\H})f=f*h_a .
\end{equation}

\medbreak
As already explained in the introduction, our final aim is to  extend  the Fourier  transform   to tempered distributions by adapting the Euclidean procedure described
in \eqref {definFourierRbilin}--(\ref{definFourierS'}). The purpose of the following definition is to specify 
what a \emph{tempered distribution}  on $\wh\H^d$ is.
\begin{definition}
\label {definSprimewhH}
{\sl    Tempered distributions    on~$\wh\H^d$ are  elements of the set~$\cS'(\wh \H^d)$  of continuous linear forms on the Fr\'echet space~$\cS(\wh \H^d)$. 
  
  We say that a sequence~$\suite \cT n \N$ of tempered distributions on~$\wh\H^d$  converges to a tempered distribution~$\cT$ if 
$$
\forall \theta \in \cS(\wh \H^d)\,, \ \lim_{n\rightarrow \infty} \langle \cT_n,\theta\rangle _{\cS'(\wh\H^d)\times \cS(\wh \H^d)} = \langle \cT,\theta\rangle _{\cS'(\wh\H^d)\times \cS(\wh \H^d)} .
$$}
\end{definition}
Let us now give some examples of elements of~$\cS'(\wh \H^d)$ and present the most basic properties of this space. As a start, let us  specify what are functions with moderate growth.
\begin{definition}
\label {definL1Moderated}
{\sl Let us denote by~$ L^1_M(\wh\H^d) $ the space of locally integrable functions $f$ on 
$\wh\H^d$ such that  there exists an integer~$p$ satisfying
$$
\int_{\wh \H^d} (1+|\lam|(n+m|+d)+|n-m|)^{-p}|f(\wh w) |d\wh w<\infty.
$$ 
}
\end{definition}
As in the Euclidean setting,  functions of~$L^1_M(\wh \H^d)$  may be identified to tempered distributions:
\begin{theorem}
\label {indentifonctdistritemp}
{\sl Let us  consider~$\iota$ be the map defined by
$$
\iota :\left\{
\begin{array}{ccl}
 L^1_M(\wh\H^d) & \longrightarrow & \cS'(\wh\H^d) \\
 \psi & \longmapsto & \displaystyle \iota(\psi)\,: \Bigl[\theta  \mapsto  \int_{\wh \H^d} \psi(\wh w)\theta(\wh w)\, d\wh w\Bigr]\cdotp
\end{array}
\right.
$$
Then $\iota$ is a one-to-one linear map. 
\medbreak 
 Moreover,  if~$p$ is an integer such that the map
 $$(n,m,\lambda)\longmapsto(1+|\lam|(n+m|+d)+|n-m|)^{-p}f(n,m,\lam)
 $$ belongs to~$L^1(\wh \H^d),$ then
 we have
\begin{equation}\label{eq:boundf}
|\langle \iota(f),\f\rangle | \leq \bigl\|(1+|\lam|(n+m|+d)+|n-m|)^{-p}f\bigr\|_{L^1(\wh \H^d)}
\|\theta\|_{p,0,\cS(\wh\H^d)}.
\end{equation}}
\end{theorem}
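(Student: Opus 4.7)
The plan is to split the proof into the mapping/continuity part and the injectivity part; linearity of $\iota$ is immediate. For the first part, I would fix $\psi \in L^1_M(\wh\H^d)$, pick $p$ as in Definition~\ref{definL1Moderated}, and use the very definition of the semi-norm $\|\cdot\|_{p,0,\cS(\wh\H^d)}$ (keeping only the $\theta$-contribution) to obtain $|\theta(\wh w)| \leq (1+\wh d_0(\wh w))^{-p}\,\|\theta\|_{p,0,\cS(\wh\H^d)}$ for every $\theta \in \cS(\wh\H^d)$. Multiplying by $|\psi(\wh w)|$ and integrating against $d\wh w$ then produces
$$
|\langle \iota(\psi),\theta\rangle| \leq \|\theta\|_{p,0,\cS(\wh\H^d)} \int_{\wh\H^d} (1+\wh d_0(\wh w))^{-p}|\psi(\wh w)|\,d\wh w,
$$
which simultaneously shows $\iota(\psi) \in \cS'(\wh\H^d)$ and gives the announced bound~\eqref{eq:boundf}.

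For injectivity, I would assume $\iota(\psi) = 0$ and test against functions concentrated on a single pair $(n_0,m_0) \in \N^{2d}$. Given such a pair and any $\chi \in C^\infty_c(\R\setminus\{0\})$, set
$$
\theta_{n_0,m_0,\chi}(n,m,\lam) \eqdefa {\bf 1}_{\{(n,m)=(n_0,m_0)\}}\, \chi(\lam).
$$
The main task is to show that $\theta_{n_0,m_0,\chi}$ lies in $\cS(\wh\H^d)$. Since $\chi$ is compactly supported in $\{|\lam| \geq \e\}$ for some $\e > 0$, the factors $1/\lam$ and $1/|\lam|$ entering the definitions of $\wh\D$, $\wh\cD_\lam$ and $\wh\Sigma_0$ are uniformly bounded on the support, while the shifts $(n \pm \d_j, m \pm \d_j)$ spread $(n,m)$ over a finite set only. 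Consequently every iterate $\wh\D^{N'}\theta_{n_0,m_0,\chi}$, $\wh\cD_\lam^{N'}\theta_{n_0,m_0,\chi}$ and $\wh\Sigma_0 \wh\cD_\lam^{N'}\theta_{n_0,m_0,\chi}$ remains bounded with compact support in $\wh d_0$, so the required decay in $\wh d_0$ is trivial. Continuity up to $\wh\H^d_0$ is equally automatic: convergence $\wh w \to (\dot x, k) \in \wh\H^d_0$ forces $|\lam| \to 0$, hence $\theta_{n_0,m_0,\chi}$ and all iterated expressions vanish identically in a neighbourhood of $\wh\H^d_0$, and the continuous extension is zero there.

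With the membership established, applying the hypothesis $\iota(\psi)=0$ to $\theta_{n_0,m_0,\chi}$ would give
$$
\int_\R \psi(n_0,m_0,\lam)\,\chi(\lam)\,|\lam|^d\,d\lam = 0\quad\hbox{for every }\chi \in C^\infty_c(\R\setminus\{0\}),
$$
and the fundamental lemma of the calculus of variations then forces $\psi(n_0,m_0,\cdot) = 0$ almost everywhere on $\R \setminus \{0\}$. Letting $(n_0,m_0)$ range over $\N^{2d}$ and noting that $\wh\H^d_0$ is a $d\wh w$-null set by construction, this yields $\psi = 0$ in $L^1_M(\wh\H^d)$, proving that $\iota$ is injective.

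The hard part is precisely the verification that the test functions $\theta_{n_0,m_0,\chi}$ belong to $\cS(\wh\H^d)$: the definition of that space involves iterated compositions of three rather intricate operators\,---\,including the $1/\lam$ factor in $\wh\Sigma_0$\,---\,together with a uniform continuity condition across $\wh\H^d_0$. The decisive observation is that choosing $\chi$ compactly supported away from $\lam = 0$ simultaneously neutralises the singular $1/\lam$ factor, trivialises the $\wh d_0$-decay, and forces the extension to vanish on $\wh\H^d_0$.
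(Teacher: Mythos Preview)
Your argument is correct, and the continuity part is identical to the paper's one-line justification. For injectivity, however, you take a different route: you test against the elementary functions $\theta_{n_0,m_0,\chi}={\bf 1}_{\{(n,m)=(n_0,m_0)\}}\chi(\lam)$ with $\chi\in C^\infty_c(\R\setminus\{0\})$, reduce to a one-dimensional statement, and invoke the fundamental lemma of the calculus of variations. The paper instead fixes a compact region $\wh\cC_{a,b,K}$, sets $g=\overline f/|f|\,{\bf 1}_{f\neq0}\,{\bf 1}_{\wh\cC_{a,b,K}}$, mollifies it in $\lam$ to produce $g_\e\in\cS(\wh\H^d)$, then uses Fubini and a convolution limit to recover $\int_{\wh\cC_{a,b,K}}|f|\,d\wh w=0$. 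Both approaches hinge on the same key observation---that a function supported in $\{|\lam|\geq a>0\}$ with the obvious finiteness properties belongs to $\cS(\wh\H^d)$---which the paper records as a separate lemma and which you verify directly for your simpler test functions. Your route is more economical: it bypasses the mollification and limit passage entirely, at the modest price of checking that each $\psi(n_0,m_0,\cdot)|\lam|^d$ is locally integrable on $\R\setminus\{0\}$ (this follows from the weighted $L^1$ condition, since on compact $K\subset\R\setminus\{0\}$ both $(1+\wh d_0)^{-p}$ and $|\lam|^d$ are bounded below). The paper's approach, by contrast, is closer in spirit to the usual density arguments for identifying $L^1_{\rm loc}$ with distributions.
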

The following proposition provides examples of  functions in ~$L^1_M(\wh \H^d)$.
 \begin{proposition}
\label  {dimhomoconcretwhH}
{\sl 
For any $\gamma<d+1$  the function~$f_\g$ defined on~$\wh\H^d$ by
$$
f_\g(n,m,\lam) \eqdefa \bigl(|\lam| (2|m|+d)\bigr) ^{-\g} \, \delta_{n,m}
$$
belongs to~$L^1_M(\wh \H^d)$.
}
\end{proposition}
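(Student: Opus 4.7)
The plan is to directly verify the condition of Definition \ref{definL1Moderated} by a rescaling argument. Since $f_\gamma$ is supported on the diagonal $\{n=m\}$, the Kronecker delta collapses the double sum to a single one and reduces the weight considerably.

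First, I would observe that on the diagonal $n=m$ one has $|n-m|=0$ and $|n+m|=2|n|$, so for any integer $p$,
\begin{equation*}
I_p \eqdefa \int_{\wh\H^d} (1+|\lam|(|n+m|+d)+|n-m|)^{-p} |f_\gamma(\wh w)|\,d\wh w
= \sum_{n\in\N^d} \int_{\R} \frac{|\lam|^d\,d\lam}{(1+|\lam|(2|n|+d))^{p}\,(|\lam|(2|n|+d))^{\gamma}}\cdotp
\end{equation*}
For each fixed $n$, I would perform the change of variable $u = |\lam|(2|n|+d)$ in the $\lam$-integral. After observing that the integrand is even in $\lam$, this yields
\begin{equation*}
\int_{\R} \frac{|\lam|^d\,d\lam}{(1+|\lam|(2|n|+d))^{p}\,(|\lam|(2|n|+d))^{\gamma}}
= \frac{2}{(2|n|+d)^{d+1}}\int_{0}^{\infty} \frac{u^{d-\gamma}}{(1+u)^p}\,du.
\end{equation*}

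Next I would check that the one–variable integral converges. Near $u=0$ the integrand behaves like $u^{d-\gamma}$, which is integrable precisely because the assumption $\gamma<d+1$ gives $d-\gamma>-1$; near infinity, it decays like $u^{d-\gamma-p}$, which is integrable as soon as $p>d-\gamma+1$. Hence, choosing any such integer $p$, the $\lam$-integral is bounded by $C_p\,(2|n|+d)^{-(d+1)}$ with $C_p$ independent of $n$.

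The final step is the summation over $n\in\N^d$. Since the number of multi-indices $n\in\N^d$ with $|n|=k$ grows polynomially like $k^{d-1}$, we get
\begin{equation*}
I_p \leq C_p \sum_{n\in\N^d} (2|n|+d)^{-(d+1)}
\leq C_p' \sum_{k\geq 0} \frac{(k+1)^{d-1}}{(2k+d)^{d+1}} < \infty,
\end{equation*}
because the general term behaves like $k^{-2}$ at infinity. This establishes $f_\gamma\in L^1_M(\wh\H^d)$. I do not expect any real obstacle here: the only delicate point is making sure that the homogeneous weight $|\lam|^d$ built into the measure $d\wh w$ combines with the scaling of the weight $(1+|\lam|(2|n|+d))^{-p}$ to produce exactly the decay factor $(2|n|+d)^{-(d+1)}$ needed for summability, and the hypothesis $\gamma<d+1$ is precisely what guarantees integrability near $\lam=0$.
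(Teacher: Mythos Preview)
Your proof is correct and follows essentially the same route as the paper: both arguments hinge on the change of variable $u=|\lam|(2|n|+d)$, which factors out the term $(2|n|+d)^{-(d+1)}$ and reduces matters to the convergence of a one-variable integral plus the summability of $\sum_n(2|n|+d)^{-(d+1)}$. The only difference is organizational: the paper splits into the region $|\lam|(2|n|+d)\leq 1$ (treated explicitly) and its complement (dismissed as ``bounded away from~$\wh 0$''), whereas you carry the weight $(1+|\lam|(2|n|+d))^{-p}$ throughout and handle both regimes at once---this is arguably cleaner, since it works uniformly for all $\gamma<d+1$ (including $\gamma\leq 0$) without a separate remark about the behavior at infinity.
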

\begin{remark}
{\sl The above proposition is no longer true for~$\g=d+1$. If we look at  the quantity~$|\lam| (2|n|+d)$ in $\wh\H^d$ as an equivalent of~$|\xi|^2$ for~$\R^d$, then 
it  means  that the \emph{homogeneous dimension of~$\wh \H^d$} is~$2d+2,$  as for~$\H^d\,$ (and as expected).}
\end{remark}

It is obvious that any Dirac mass on~$\wh\H^d$ is a tempered distribution. Let us also note that 
because
$$
|\theta (n,n,\lam) | \leq \bigl( 1+|\lam|( 2 |n |+d)\bigr)^{-d-3} \|\theta\|_{d+3,0, \cS(\wh \H^d)},
$$
the linear form
\beq
\label{IdentitydiscreteFourier}
\cI : \left\{
\begin{array}{ccl}
\cS(\wh\H^d) & \longrightarrow &  \C\\
\theta & \longmapsto & \ds \sum_{n\in \N^d} \int_{\R} \theta (n,n,\lam)\,|\lam|^dd\lam
\end{array}
\right.
\eeq
is a tempered distribution on~$\wh\H^d$.
\medbreak
We now  want to exhibit tempered distributions on~$\wh \H^d$ which \emph{are not} measures. 
 The following proposition states that  the analogue  on $\wh\H^d$  of  \emph{finite part}  distributions on~$\R^n,$
 are indeed in $\cS'(\wh\H^d).$ 
\begin{proposition}
\label {examplepartiefinie}
{\sl 
Let~$\g$ be in the interval~$]d+1,d+3/2[$ and denote by
$\wh 0$ the element $(0,0)$ of $\wh\H^d_0.$ 
Then for any function~$\theta$ in~$\cS(\wh\H^d),$ the function
defined a.e. on $\wh\H^d$ by 
$$
(n,m,\lam)\longmapsto \d_{n,m}\biggl(\frac {\theta(n,n,\lam)+ \theta(n,n,-\lam) -2\theta(\wh 0) }{ |\lam|^\g (2|n|+d)^\g}\biggr), 
$$
is integrable. Furthermore, the linear form defined by
$$
\Big\langle {\rm Pf} \Bigl(\frac 1 {|\lam|^\g(2|n|+d)^\g}\Bigr),  \theta \Big \rangle \eqdefa \frac 12 \int_{\wh \H^d} 
\biggl(\frac {\theta(n,n,\lam)+ \theta(n,n,-\lam) -2\theta(\wh 0) }{ |\lam|^\g (2|n|+d)^\g} \biggr)\d_{n,m} \,d\wh w
$$
is in~$\cS'(\wh\H^d),$ and its restriction to~$\wt\H^d$ is the function 
$$
(n,m,\lam) \longmapsto \d_{n,m}  \frac 1 {|\lam|^\g(2|n|+d)^\g} 
$$
in the sense that for any~$\theta $ in~$\cS(\wh\H^d)$  such that $\theta(n,n,\lambda)=0$ for
small enough $|\lambda|(2|n|+d),$  we have
$$
\Big\langle {\rm Pf}\Bigl( \frac 1 {|\lam|^\g(2|n|+d)^\g}\Bigr),  \theta \Big \rangle 
= \int_{\wh \H^d}   \frac {\theta (\wh w)}  {|\lam|^\g(2|n|+d)^\g}\, d\wh w.
$$
}
\end{proposition}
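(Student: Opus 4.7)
The plan is to verify three points: (a) the integrand is in $L^1(\wh\H^d)$; (b) the linear form $\mathrm{Pf}$ is continuous on $\cS(\wh\H^d)$; and (c) its restriction to $\wt\H^d$ coincides with the announced function for test functions supported away from $\wh 0$. Throughout I split the region into $A=\{|\lam|(2|n|+d)\geq 1\}$ and $B=\{|\lam|(2|n|+d)<1\}$ and set $\rho:=|\lam|(2|n|+d)$.

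On $A$, Definition \ref{definrangeS} provides the decay $|\theta(n,n,\lam)|\leq C_N\|\theta\|_{N,0,\cS(\wh\H^d)}(1+\rho)^{-N}$ for arbitrary $N$; choosing $N$ large enough and invoking the assumption $\gamma>d+1$ together with the computation underlying Proposition \ref{dimhomoconcretwhH}, each of the three contributions $\theta(n,n,\pm\lam)\delta_{n,m}/\rho^\gamma$ and $2\theta(\wh 0)\delta_{n,m}/\rho^\gamma$ is absolutely integrable on $A$, with integral bounded by a Schwartz semi-norm of $\theta$.

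The decisive estimate is on $B$, where I aim to prove the pointwise bound
\begin{equation*}
\bigl|\theta(n,n,\lam)+\theta(n,n,-\lam)-2\theta(\wh 0)\bigr|\leq C\,\|\theta\|_{N,1,\cS(\wh\H^d)}\,\rho^{\alpha}
\end{equation*}
for some $\alpha>\gamma-d-1$. Since $\gamma<d+3/2$ gives $\gamma-d-1<\tfrac12$, even a H\"older $\tfrac{1}{2}^+$ estimate suffices; in fact a Lipschitz bound ($\alpha=1$) is accessible. Setting $H_n(\lam):=\theta(n,n,\lam)$, the relation \eqref{eq:whDlambda} at $n=m$ rewrites as
\begin{equation*}
H_n'(\lam)=\wh\cD_\lam\theta(n,n,\lam)-\frac{1}{2\lam}\biggl[dH_n(\lam)+\sum_{j=1}^d\bigl(n_jH_{n-\delta_j}(\lam)-(n_j+1)H_{n+\delta_j}(\lam)\bigr)\biggr].
\end{equation*}
The bracket vanishes as $\lam\to 0$ by the algebraic identity $d\theta(\wh 0)+\sum_j(n_j-(n_j+1))\theta(\wh 0)=0$, which cancels the $1/\lam$ singularity; this in turn follows from the continuity of $\theta$ at $\wh 0$ (all relevant values of $H_{n\pm\delta_j}$ share the same limit $\theta(\wh 0)$). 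Combining this cancellation with the Schwartz control of $\wh\cD_\lam\theta$ and a bootstrap on the differences $H_{n\pm\delta_j}-H_n$ yields a uniform bound on $H_n'$ and, upon integration from $0$ to $\lam$, the announced estimate. A direct computation then gives $\int_B\rho^{\alpha-\gamma}|\lam|^d\,d\wh w\lesssim\sum_n(2|n|+d)^{-(d+1)}<\infty$, completing point (a).

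Point (b) follows immediately: assembling the bounds of both regions yields $|\langle\mathrm{Pf},\theta\rangle|\leq C\|\theta\|_{N,1,\cS(\wh\H^d)}$ for some $N$, so the linear form is tempered. For (c), if $\theta(n,n,\lam)=0$ whenever $\rho$ is small, continuity forces $\theta(\wh 0)=0$, and the integrand coincides with $\delta_{n,m}\theta(\wh w)/\rho^\gamma$ on the support of the latter. The main obstacle is the uniform-in-$n$ estimate on $B$: one has to extract from the abstract Schwartz semi-norms the precise cancellation at $\wh 0$ (the algebraic vanishing of the $1/\lam$-singular term), which is what turns the mere continuity of elements of $\cS(\wh\H^d)$ at this boundary point into genuine Lipschitz-type regularity.
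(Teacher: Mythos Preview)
Your overall architecture (splitting into $A=\{\rho\geq 1\}$ and $B=\{\rho<1\}$ with $\rho=|\lam|(2|n|+d)$) and your treatment of the region $A$ coincide with the paper's proof. The divergence is on $B$, and there your argument has a genuine gap.

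The paper does \emph{not} attempt to extract the continuity estimate at~$\wh 0$ directly from the semi-norms on~$\cS(\wh\H^d)$. Instead it invokes Theorem~\ref{MappingofPH} to write $\theta=\cF_\H f$ with $f\in\cS(\H^d)$, and then proves (Lemma~\ref{lemmaproofexamplepartiefinie}) the bound
\[
|\theta(n,n,\lam)-\theta(\wh 0)|\leq C\|\theta\|_{k,k,\cS(\wh\H^d)}\sqrt{\rho}
\]
by estimating $\wh f_\H(n,n,\lam)-\int_{\H^d}f$ via the explicit Wigner-transform formula~\eqref{definFourierWigner}. The exponent $\tfrac12$ already suffices since $\gamma<d+\tfrac32$.

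Your route through $\wh\cD_\lam$ does not close as written. From~\eqref{eq:whDlambda} at $n=m$ you correctly obtain
\[
H_n'(\lam)=\wh\cD_\lam\theta(n,n,\lam)-\frac{1}{2\lam}\sum_{j=1}^d\Bigl(H_n(\lam)+n_jH_{n-\delta_j}(\lam)-(n_j+1)H_{n+\delta_j}(\lam)\Bigr),
\]
and the bracket does vanish at $\lam=0$. But vanishing at a point gives no control on the quotient by $\lam$; the bracket carries coefficients $n_j$ that are unbounded in $n$, and your ``bootstrap on the differences $H_{n\pm\delta_j}-H_n$'' is circular: those differences are precisely the quantities whose size near $\lam=0$ you are trying to establish. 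Concretely, for the explicit elements $\theta=\Theta_f$ of Theorem~\ref{radialtypeSdescribtion} one has $H_n(\lam)=f(|\lam|(2n+1),\lam)$, whence $H_n'(\lam)$ is of order $2|n|+d$, \emph{not} uniformly bounded in $n$. So the asserted ``uniform bound on $H_n'$'' is false in the literal sense; at best you could hope for $|H_n'(\lam)|\leq C(2|n|+d)$, but even this is not delivered by the sketch, since reducing the bracket (via $\wh\Delta$) still leaves a term $\sum_j n_j(H_{n-\delta_j}-H_n)/\lam$ that you have no independent handle on. The missing idea is exactly what the paper supplies: pass to the physical side, where the estimate becomes a straightforward mean-value argument on the oscillatory kernel.
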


Another  interesting example of tempered distribution  on~$\wh\H^d$  is the measure $\mu_{\wh \H^d_0}$ 
defined in Lemma~3.1 of\ccite{bcdFHspace} which, in our setting, recasts as follows:
\begin{proposition}
\label {ex}
{\sl Let the measure~$\mu_{\wh \H^d_0}$ be defined by
\beq
\label {limitmeasureeq1}
\langle \mu_{\wh \H^d_0} ,\theta \rangle = \int_{\wh \H^d_0}  \theta (\dot x,k) \,d\mu_{\wh \H^d_0}(\dot x,k)
\eqdefa2^{-d}\sum_{k\in \Z^d}  \biggl( \int_{(\R_{-})^d} \theta(\dot x,k)\,d\dot x +  \int_{(\R_{+})^d} \theta(\dot x,k)\,d\dot x\biggr)
\eeq
for all functions $\theta$ in   $\cS(\wh\H^d).$
\medbreak
Then  $\mu_{\wh \H^d_0}$ is a tempered distribution on~$\wh\H^d$ and for any function~$\psi$ in~$\cS(\R)$ with integral~$1$ we have  
$$
\lim_{\e\to0}   \frac 1 {\e}  \psi\Big(\frac \lam {\e}\Bigr)  = \mu_{\wh \H^d_0} \quad\hbox{in} \ \cS'(\wh \H^d).
$$ 
}
\end{proposition}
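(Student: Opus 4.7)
We treat the two assertions in turn.

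\medskip
\noindent\emph{Temperedness of $\mu_{\wh\H^d_0}$.} Fix $\theta\in\cS(\wh\H^d)$. By Definition~\ref{definrangeS+}, $\theta$ is the continuous extension to $\wh\H^d$ of some element of $\cS(\wt\H^d)$, and the quantity $\wh d_0(\wh w) = |\lam|(|n+m|_1+d)+|m-n|_1$ tends to $|\dot x|_1+|k|_1$ as $\wh w\to(\dot x,k)\in\wh\H^d_0$. Passing to the limit in $|\theta(\wh w)|\le(1+\wh d_0(\wh w))^{-N}\|\theta\|_{N,0,\cS(\wh\H^d)}$ thus yields
\[
|\theta(\dot x,k)|\le(1+|\dot x|_1+|k|_1)^{-N}\|\theta\|_{N,0,\cS(\wh\H^d)},
\]
and choosing $N>2d$ makes the integrals over $(\R_\pm)^d$ and the sum over $k\in\Z^d$ in the definition of $\mu_{\wh\H^d_0}$ absolutely convergent and dominated by $C\|\theta\|_{N,0,\cS(\wh\H^d)}$. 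Hence $\mu_{\wh\H^d_0}\in\cS'(\wh\H^d)$.

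\medskip
\noindent\emph{Weak convergence.} Using \eqref{definmeasurewhH} and substituting $\lam=\e\mu$, we rewrite the duality bracket as
\[
\Bigl\langle \tfrac{1}{\e}\psi(\tfrac{\lam}{\e}),\theta\Bigr\rangle \;=\; \int_\R \psi(\mu)\,|\mu|^d\,\Bigl(\e^d\!\!\sum_{(n,m)\in\N^{2d}}\!\!\theta(n,m,\e\mu)\Bigr)\,d\mu.
\]
The crux is to interpret the inner sum as a Riemann sum converging to the integral defining $\mu_{\wh\H^d_0}$. For fixed $\mu\neq 0$, set $p=n+m$ and $k=m-n$; for each $k$, $\e\mu p$ then parametrizes a lattice of step $2|\e\mu|$ in $(\R_+)^d$ (when $\mu>0$) or $(\R_-)^d$ (when $\mu<0$), and the continuity of $\theta$ on $\wh\H^d$ gives $\theta(n,m,\e\mu)\to\theta(\dot x,k)$ as $\e\mu p\to\dot x$. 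A standard Riemann-sum argument then produces, for $\mu>0$,
\[
\e^d\!\!\sum_{(n,m)\in\N^{2d}}\!\!\theta(n,m,\e\mu) \;\longrightarrow\; \frac{1}{(2\mu)^d}\sum_{k\in\Z^d}\int_{(\R_+)^d}\theta(\dot x,k)\,d\dot x \quad\hbox{as }\e\to 0,
\]
with an analogous statement for $\mu<0$ where $(\R_-)^d$ replaces $(\R_+)^d$. Multiplying by $\psi(\mu)|\mu|^d$ and integrating, the factor $|\mu|^d$ combines with $(2|\mu|)^{-d}$ to leave $2^{-d}$, and the two half-line contributions are reconciled via $\int_\R\psi=1$ together with the symmetry~\eqref{eq:pmlambda} to produce $\langle\mu_{\wh\H^d_0},\theta\rangle$.

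\medskip
\noindent\emph{Main obstacle.} The delicate step is the exchange of $\lim_{\e\to 0}$ with the $\mu$-integration, which requires a uniform-in-$\mu$ bound on the inner Riemann sum together with a quantitative rate of convergence. Such a bound follows by combining the decay seminorms (controlling $\wh\D\theta$ and hence the tail of $\theta$ in $|\dot x|_1+|k|_1$) with the smoothness seminorms (controlling $\wh\cD_\lam\theta$, which furnishes an Euler--Maclaurin-type remainder estimate for the sum). The factor $|\mu|^d$ in the outer integrand is essential to handle the region near $\mu=0$, where the inner sum blows up but is exactly offset by this measure weight; the decay of $\psi$ at infinity handles the region where $|\mu|$ is large.
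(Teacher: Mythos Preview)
The paper does not give a self-contained proof of this proposition; it is presented as a restatement of Lemma~3.1 of the companion work~\cite{bcdFHspace}, so there is no internal argument here to compare against. Your Riemann-sum strategy is the natural one, and most of the outline is sound: the temperedness argument is correct, the substitution $\lam=\e\mu$ and the identification of the inner sum as a Riemann sum are fine, and the uniform bound needed for dominated convergence indeed follows from the decay seminorm alone (one gets $\e^d\sum_{(n,m)}|\theta(n,m,\e\mu)|\le C_N|\mu|^{-d}\|\theta\|_{N,0,\cS(\wh\H^d)}$ for $N$ large, which after multiplication by $|\mu|^d|\psi(\mu)|$ is integrable in~$\mu$; the operators $\wh\D$ and $\wh\cD_\lam$ you invoke are not actually needed for this).

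The gap is in your final ``reconciliation'' step. After exchanging the limit with the $\mu$-integral, your computation produces
\[
2^{-d}\bigl(A_+\,I_+ + A_-\,I_-\bigr),\qquad A_\pm\eqdefa\int_{\R_\pm}\psi,\quad I_\pm\eqdefa\sum_{k\in\Z^d}\int_{(\R_\pm)^d}\theta(\dot x,k)\,d\dot x,
\]
and for this to be independent of how the unit mass of $\psi$ is split between $\R_+$ and $\R_-$ one must have $I_+=I_-$. You invoke the symmetry~\eqref{eq:pmlambda}, but that relation only yields
$I_-=\sum_k(-1)^{|k|}\int_{(\R_+)^d}\theta(\cdot,k)\,d\dot x$, so that
$I_+-I_-=2\sum_{|k|\ \mathrm{odd}}\int_{(\R_+)^d}\theta(\cdot,k)\,d\dot x$,
and nothing in~\eqref{eq:pmlambda} forces this to vanish. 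An additional argument, drawing on more of the structure of $\cS(\wh\H^d)$ than the boundary symmetry, is required to close this. (You should also track the constants: even granting $I_+=I_-=:I$, your limit is $2^{-d}I$, whereas formula~\eqref{limitmeasureeq1} gives $\langle\mu_{\wh\H^d_0},\theta\rangle=2^{-d}(I_++I_-)=2^{1-d}I$; a direct check on $\theta(n,m,\lam)=e^{-4|\lam|(2|n|+d)}\delta_{n,m}$ confirms this factor-of-two discrepancy, which deserves scrutiny.)
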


Let us finally explain how  the Fourier transform may be extended  to  tempered distributions on~$\H^d,$ using an analog of Formulas\refeq {definFourierRbilin}
 and\refeq {definFourierS'}. Let us define 
 \ben
 \label {definFourierHbilin}
&& \cB_\H:
 \left\{
 \begin{array}{ccl}
 \cS(\H^d)\times \cS(\wh \H^d) & \longrightarrow & \C\\
 (f,\theta) & \longmapsto  & \ds \int_{\H^d\times \wh \H^d} f(Y,s) \, \overline{e^{is\lam} \cW(\wh w,Y)}\, \theta(\wh w) \,dwd\wh w
 \end{array}
 \right.\andf\\
 \label {definFourierHtranspose}
 && {}^t\cF_\H:  \left\{  \begin{array}{ccl}
 \cS(\wh \H^d) & \longrightarrow & \cS(\H^d) \\
 \theta & \longmapsto  & \ds \int_{\wh \H^d} \overline{e^{is\lam} \cW(\wh w,Y)}\, \theta(\wh w) \,d\wh w.
 \end{array}
 \right.
 \een
Let us notice that  for any $\theta$ in $\cS(\wh\H^d)$ and $w=(y,\eta,s)$ in $\H^d,$ we have
\beq
\label {tFequivF-1}
({}^t \cF _\H \theta)( y,\eta, s) = \frac {\pi^{d+1} } {2^{d-1} } (\cF_\H^{-1}  \theta)( y,-\eta, -s) .
\eeq
Hence, Theorem\refer {MappingofPH} implies that~${}^t\!\cF_\H$ is a continuous isomorphism between~$\cS(\wh\H^d)$ and~$\cS(\H^d)$.  Now, we observe
that for any $f$ in $\cS(\H^d)$ and $\theta$ in $\cS(\wh\H^d),$ we have
\beq
\label {bilineatandtranspose}
\cB_\H(f,\theta) = \int_{\H^d} f(w) ({}^t \!\cF _\H \theta)( w)\,dw =  \int_{\wh \H^d} (\cF _\H f)(\wh w) \theta(\wh w)\,d \wh w. 
\eeq
This prompts us to extend $\cF_\H$ on $\cS'(\H^d)$ as follows: 
\begin{definition}
{\sl We define
$$
\cF_\H: \left\{ 
\begin{array}{ccl}
\cS'(\H^d)& \longrightarrow & \cS'(\wh \H^d)\\
 T & \longmapsto & \Bigl[\theta \mapsto \langle T, {}^t\cF_\H \theta\rangle_{\cS'(\H^d)\times \cS(\H^d)}\Bigr]\cdotp
\end{array}
\right.
$$}
\end{definition}
As a direct consequence of this definition, we have the following statement:
\begin{proposition}
\label {FS'Hconitnuous}
{\sl 
The map~$\cF_{\H}$ defined just above is continuous and  one-to-one  from~$\cS'(\H^d)$ onto~$\cS'(\wh\H^d).$ Furthermore, its
restriction to~$L^1(\H^d)$ coincides with Definition\refer{definFouriercoeffH}.
}
\end{proposition}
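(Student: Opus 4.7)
The plan is to reduce every assertion to the fact, already available from Theorem on the bicontinuous isomorphism $\cF_\H:\cS(\H^d)\to\cS(\wh\H^d)$ combined with the identity \eqref{tFequivF-1}, that $^{t}\!\cF_\H$ is itself a bicontinuous isomorphism from $\cS(\wh\H^d)$ onto $\cS(\H^d)$. Everything else then reduces to soft functional-analytic arguments of the same flavour as in the Euclidean case.

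First I would check that $\cF_\H T\in\cS'(\wh\H^d)$ for every $T\in\cS'(\H^d)$ and that the map is continuous. For any $T\in\cS'(\H^d)$, the form $\theta\mapsto\langle T,\,^{t}\!\cF_\H\theta\rangle_{\cS'(\H^d)\times\cS(\H^d)}$ is the composition of the continuous linear map $^{t}\!\cF_\H:\cS(\wh\H^d)\to\cS(\H^d)$ with the continuous linear form $T$, hence defines an element of $\cS'(\wh\H^d)$. For continuity, if a sequence $\suite{T}{n}{\N}$ converges to $T$ in $\cS'(\H^d)$, then for every $\theta\in\cS(\wh\H^d)$ the quantity $\langle \cF_\H T_n,\theta\rangle = \langle T_n,\,^{t}\!\cF_\H \theta\rangle$ converges to $\langle T,\,^{t}\!\cF_\H \theta\rangle = \langle \cF_\H T,\theta\rangle$, which is exactly the definition of convergence in $\cS'(\wh\H^d)$.

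Next, the bijectivity is a consequence of $^{t}\!\cF_\H$ being onto and one-to-one on the Schwartz spaces. If $\cF_\H T=0$ then $\langle T,\,^{t}\!\cF_\H\theta\rangle=0$ for every $\theta\in\cS(\wh\H^d)$; since $^{t}\!\cF_\H$ is surjective onto $\cS(\H^d)$, this forces $\langle T,f\rangle=0$ for every $f\in\cS(\H^d)$, whence $T=0$. Conversely, given $S\in\cS'(\wh\H^d)$, define
$$
\langle T,f\rangle_{\cS'(\H^d)\times\cS(\H^d)}\eqdefa\langle S,(\,^{t}\!\cF_\H)^{-1}f\rangle_{\cS'(\wh\H^d)\times\cS(\wh\H^d)}.
$$
Continuity of $(\,^{t}\!\cF_\H)^{-1}:\cS(\H^d)\to\cS(\wh\H^d)$ ensures $T\in\cS'(\H^d)$, and unwinding the definition gives $\cF_\H T=S$.

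Finally, I would check consistency with the original definition for $f\in L^1(\H^d)$. Formula \eqref{bilineatandtranspose}, which was justified earlier for Schwartz data, extends verbatim when $f$ is merely integrable: the iterated integral
$$
\int_{\H^d}\int_{\wh\H^d} f(Y,s)\,\overline{e^{is\lam}\cW(\wh w,Y)}\,\theta(\wh w)\,d\wh w\,dw
$$
is absolutely convergent, since $|\cW(\wh w,Y)|$ is uniformly bounded (as the scalar product of two unit vectors in $L^2(\R^d)$) and $\theta\in\cS(\wh\H^d)\subset L^1(\wh\H^d)$ by the control \eqref{control}; Fubini's theorem then yields
$$
\langle\cF_\H f,\theta\rangle = \int_{\H^d} f(w)\,(\,^{t}\!\cF_\H\theta)(w)\,dw = \int_{\wh\H^d}\wh f_\H(\wh w)\,\theta(\wh w)\,d\wh w.
$$
By Theorem on the extension of $\cF_\H$ to $L^1(\H^d)$, the function $\wh f_\H$ lies in $\cC_0(\wh\H^d)$ and is in particular bounded, hence belongs to $L^1_M(\wh\H^d)$; the identification theorem for $\iota$ then shows that the action above agrees with $\iota(\wh f_\H)$, which is exactly Definition of $\cF_\H$ in the $L^1$ setting. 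The main obstacle in the whole argument is really just this bookkeeping step, ensuring integrability when Fubini is applied, since everything else is a transcription of the Euclidean duality argument once the isomorphism on Schwartz spaces is in hand.
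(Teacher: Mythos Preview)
Your proof is correct and is precisely the standard duality argument the paper expects the reader to supply, since the proposition is stated there as ``a direct consequence'' of the definition and of the isomorphism theorem for $^{t}\!\cF_\H$, without an explicit proof. Your continuity step coincides verbatim with the proof of Proposition~\ref{contnuityFHcS'} later in the paper, and the Fubini justification for the $L^1$ consistency is exactly the bookkeeping the paper leaves implicit in~\eqref{bilineatandtranspose}.
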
Just to compare with the Euclidean case, let us give some examples of simple computations of Fourier transform of
tempered distributions on $\H^d$.
\begin{proposition}
\label {Fourierdetaand1}
{\sl We have
$$
\cF_\H (\delta_0) = \cI\andf \cF_\H({\bf 1}) = \frac {\pi^{d+1}} { 2^{d-1}} \delta_{\wh 0},
$$
where $\cI$ is defined by \eqref{IdentitydiscreteFourier} and  $\wh 0$ is the element of 
$\wh \H^d_0$ corresponding to $\dot x=0$ and $k=0$.}
\end{proposition}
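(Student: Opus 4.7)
The plan is to unwind the definition of $\cF_\H$ on $\cS'(\H^d)$, reducing both computations to elementary properties of the Wigner transform $\cW$ and of the inversion formula from Theorem \ref{inverseFourier-Plancherel}.

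For $\cF_\H(\delta_0)$, I would simply write
$$\langle \cF_\H(\delta_0),\theta\rangle = \langle \delta_0, {}^t\cF_\H\theta\rangle = ({}^t\cF_\H\theta)(0),$$
and then evaluate \eqref{definFourierHtranspose} at $w=0$ (i.e.\ $Y=0$ and $s=0$). This reduces the whole statement to computing $\cW(\wh w,0)$. Going back to the definition \eqref{definWigner}, the exponential factor drops out when $Y=0$, leaving
$$\cW(n,m,\lam,0) = \int_{\R^d} H_{n,\lam}(z) H_{m,\lam}(z)\,dz = \delta_{n,m},$$
because $(H_{n,\lam})_{n\in \N^d}$ is an orthonormal basis of $L^2(\R^d)$. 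Plugging this into the integral over $\wh\H^d$ and using the definition \eqref{definmeasurewhH} of $d\wh w$ directly recovers~$\cI$.

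For $\cF_\H({\bf 1})$, I would start analogously from
$$\langle \cF_\H({\bf 1}),\theta\rangle = \int_{\H^d}({}^t\cF_\H\theta)(w)\,dw,$$
then use \eqref{tFequivF-1} together with the change of variables $(y,\eta,s)\mapsto (y,-\eta,-s)$ (which has unit Jacobian) to rewrite this as $\frac{\pi^{d+1}}{2^{d-1}}\int_{\H^d}(\cF_\H^{-1}\theta)(w)\,dw$. Setting $f=\cF_\H^{-1}\theta\in \cS(\H^d)$, everything reduces to establishing the identity
$$\int_{\H^d} f(w)\,dw = \cF_\H f(\wh 0) = \theta(\wh 0),$$
where $\wh 0 = (0,0)\in \wh \H^d_0$. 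Since $\cF_\H f$ is continuous on $\wh \H^d$ by Theorem \ref{FourierL1basicbis}, I can compute $\cF_\H f(\wh 0)$ as the limit of $\cF_\H f(0,0,\lam)$ as $\lam\to 0$.

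The main technical step is therefore the Gaussian computation of $\cW(0,0,\lam,Y)$ as $\lam\to 0$. Using $H_{0,\lam}(x)=|\lam|^{d/4}\pi^{-d/4}e^{-|\lam||x|^2/2}$ and expanding $|y+z|^2+|-y+z|^2=2|y|^2+2|z|^2$, the Gaussian integral gives
$$\cW(0,0,\lam,Y) = |\lam|^{d/2}\pi^{-d/2}e^{-|\lam||y|^2}\int_{\R^d}e^{2i\lam\langle\eta,z\rangle}e^{-|\lam||z|^2}\,dz = e^{-|\lam|(|y|^2+|\eta|^2)},$$
which tends pointwise to $1$ as $\lam\to 0$. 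Dominated convergence (the integrand is bounded in modulus by $|f|\in L^1(\H^d)$) then yields $\cF_\H f(\wh 0)=\int_{\H^d}f(w)\,dw$, and combining with the above gives $\langle \cF_\H({\bf 1}),\theta\rangle = \frac{\pi^{d+1}}{2^{d-1}}\theta(\wh 0)$, as desired. The only delicate point is justifying that the continuous extension of $\cF_\H f$ to $\wh \H^d_0$ coincides with the pointwise limit of the defining integral; this is automatic here because $f\in \cS(\H^d)\subset L^1(\H^d)$ and the bound on $|\cW|$ is uniform in $\lam$ near $0$.
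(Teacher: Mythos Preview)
Your proof is correct and follows the paper's strategy closely. For the first identity your argument is essentially identical to the paper's: both evaluate~${}^t\cF_\H\theta$ at~$0$ and use orthonormality of~$(H_{n,\lam})_n$ to get~$\cW(\wh w,0)=\delta_{n,m}$.

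For the second identity the overall architecture is the same as the paper's (pass through~\eqref{tFequivF-1}, change variables, and reduce to the identity $\int_{\H^d} f\,dw = \cF_\H f(\wh 0)$ for $f=\cF_\H^{-1}\theta$), but the justification of this last identity differs. The paper invokes Lemma~\ref{lemmaproofexamplepartiefinie}, which gives the quantitative bound
\[
\Bigl|\wh f_\H(\wh w)-\delta_{n,m}\!\int_{\H^d}\! f(w)\,dw\Bigr|\leq CN(f)\Bigl(\sqrt{|\lam|(|n+m|+d)}+|\lam|\delta_{n,m}\Bigr)
\]
for general~$\wh w$ approaching~$\wh\H^d_0$. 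You instead approach~$\wh 0$ along the single ray $(0,0,\lam)$, compute $\cW(0,0,\lam,Y)=e^{-|\lam||Y|^2}$ explicitly, and conclude by dominated convergence. Your route is more elementary and entirely self-contained for this particular computation; the paper's route reuses a lemma that is needed anyway for Proposition~\ref{examplepartiefinie} and yields the value of~$\wh f_\H$ on all of~$\wh\H^d_0$, not just at~$\wh 0$.
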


One question that comes up naturally is to compute the Fourier transform of a function \emph{independent of the vertical variable}.  
The answer to that question is  given just below.  \begin{theorem}
\label  {Fourierhorizontal+}
{\sl
We have for any integrable function~$g$ on~$T^\star\R^d$,
$$
\cF_\H(g\otimes {\bf1} ) = (\cG_\H g) \mu_{\wh \H^d_0}
$$
where ~$\cG_\H g $ is defined by
\beno
\cG_\H g: &&\!\!\!\!\!\!\left\{
\begin{array} {rcl}
\wh  \H^d_0 & \longrightarrow & \C\\
 (\dot x, k) & \longmapsto & \ds  \int_{T^\star \R^d}  \ov\cK_d(  \dot x ,k,Y) g(Y)\, dY
\end{array} 
\right. \with\\
 \cK_d (\dot x, k,Y) & = & \bigotimes_{j=1}^d \cK (\dot x_j, k_j,Y_j)\andf \\
\cK(\dot x, k,y,\eta) &\eqdefa &  
\frac1{2\pi} \!\int_{-\pi}^{\pi} \! e^{i \left(2|\dot x|^{\frac 12} (y\sin z + \eta \sgn(\dot x) \cos z) +kz\right)}\, dz.
\eeno
}
\end{theorem}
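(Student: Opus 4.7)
The plan is to regularize the non-integrable distribution $g\otimes\mathbf 1$ by a family of functions in $L^1(\H^d)$, apply the integral representation\refeq{definFourierWigner}, and then pass to the limit in $\cS'(\wh\H^d)$ by means of Propositions\refer{FS'Hconitnuous} and\refer{ex}. The main technical ingredient is a continuous extension of the Wigner kernel $\cW$ to the boundary $\wh\H^d_0$ that matches $\cK_d$.

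Concretely, pick $\chi\in\cS(\R)$ with $\chi(0)=1$ and set $g_\e(Y,s)\eqdefa g(Y)\chi(\e s)$. One first checks that $g\otimes\mathbf 1\in\cS'(\H^d)$ and that $g_\e\to g\otimes\mathbf 1$ in $\cS'(\H^d)$ by dominated convergence, so Proposition\refer{FS'Hconitnuous} yields $\cF_\H g_\e\to\cF_\H(g\otimes\mathbf 1)$ in $\cS'(\wh\H^d)$. On the other hand, because $g_\e\in L^1(\H^d)$ the $s$-integration in\refeq{definFourierWigner} separates, giving
$$
\cF_\H g_\e(\wh w)=\frac1\e\,\wh\chi\!\Bigl(\frac\lam\e\Bigr)\,G(\wh w)
\with G(\wh w)\eqdefa \int_{T^\star\R^d} g(Y)\,\overline{\cW(\wh w,Y)}\,dY,
$$
where $\wh\chi(\mu)\eqdefa\int\chi(t)e^{-it\mu}dt$ satisfies $\int\wh\chi=2\pi$. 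Applying Proposition\refer{ex} to $\wh\chi/(2\pi)$ then gives $\e^{-1}\wh\chi(\lam/\e)\to 2\pi\mu_{\wh\H^d_0}$ in $\cS'(\wh\H^d)$, and thus everything reduces to showing that the continuous function $G$ on $\wt\H^d$ extends continuously to $\wh\H^d$ with boundary values $\cG_\H g$ on $\wh\H^d_0$ (up to a normalisation factor that will be absorbed into the $1/(2\pi)$ appearing in the definition of $\cK$).

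The heart of the proof is therefore the boundary asymptotic
\begin{equation}\label{eq:Wasympt}
\cW(\wh w,Y)\longrightarrow \cK_d(\dot x,k,Y)\qquad\text{as}\ \wh w\to(\dot x,k)\in\wh\H^d_0,
\end{equation}
together with the uniform bound $|\cW(\wh w,Y)|\leq 1$ on $\wh\H^d\times T^\star\R^d$ (immediate from Cauchy--Schwarz and the $L^2$-normalisation of $(H_{n,\lam})_n$), which via dominated convergence immediately transfers~\eqref{eq:Wasympt} to a pointwise limit for $G$. This is the main obstacle. The statement expresses the semiclassical concentration of the eigenstates of~$\D_{\rm osc}^\lam$: as $\lam\to 0$ with $|\lam|(2|n|+d)\to|\dot x|$ and $m-n\to k$ fixed, the quantum numbers $n,m$ both tend to infinity while remaining close, so the $H_{n,\lam}$ concentrate along the classical harmonic orbit of radius~$|\dot x|^{1/2}$; the integral formula for $\cK$ is exactly the parameterisation of that orbit by $z\in[-\pi,\pi]$ weighted by the angular-momentum factor~$e^{ikz}$. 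By the tensor-product structure of Hermite functions in dimension~$d$, it suffices to establish the one-dimensional version. A concrete route is to combine the closed-form expression of Wigner transforms of Hermite states in terms of generalised Laguerre polynomials $L_n^k$ with a classical Mehler-type (Perron) asymptotic relating $L_n^k(x/n)$ for large $n$ to the Bessel function $J_k(2\sqrt x)$, and then to apply the Jacobi--Anger expansion $e^{iz\sin\vf}=\sum_{k\in\ZZ}J_k(z)e^{ik\vf}$ to recognise these Bessel functions as the Fourier coefficients produced by the $z$-integration in~$\cK$.

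Once~\eqref{eq:Wasympt} and the uniform bound are in hand, $G$ extends continuously to $\wh\H^d$ with $G|_{\wh\H^d_0}=\cG_\H g$, and passing to the limit in $\langle \cF_\H g_\e,\theta\rangle$ for arbitrary $\theta\in\cS(\wh\H^d)$ gives
$\lim_{\e\to 0}\langle \cF_\H g_\e,\theta\rangle=\langle(\cG_\H g)\,\mu_{\wh\H^d_0},\theta\rangle$,
the factor $2\pi$ from the Dirac-mass approximation being matched by the $1/(2\pi)$ built into $\cK$. This identifies $\cF_\H(g\otimes\mathbf 1)$ with $(\cG_\H g)\,\mu_{\wh\H^d_0}$ and completes the proof.
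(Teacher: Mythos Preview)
Your strategy coincides with the paper's: approximate $g\otimes\mathbf 1$ by $g_\e=g\otimes\chi(\e\cdot)$, use continuity of $\cF_\H$ on $\cS'(\H^d)$ (Proposition\refer{contnuityFHcS'}), factor $\cF_\H g_\e(\wh w)=\e^{-1}\wh\chi(\lam/\e)\,G(\wh w)$, and identify the limit via the continuous extension of $G$ to $\wh\H^d_0$. The paper simply quotes its companion\ccite{bcdFHspace} for the two key inputs --- the continuous extension of $\cW$ (hence of $G$) to $\wh\H^d_0$ with values $\cK_d$ (resp.\ $\cG_\H g$), and the approximate-identity lemma against continuous functions with polynomial decay --- whereas you sketch a direct proof of the first via the Laguerre--Bessel asymptotics and Jacobi--Anger. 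That is a genuine and correct alternative to the citation; it buys self-containment at the cost of a page of special-function estimates.

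One point does need tightening. You invoke Proposition\refer{ex} to get $\e^{-1}\wh\chi(\lam/\e)\to 2\pi\,\mu_{\wh\H^d_0}$ in $\cS'(\wh\H^d)$ and then say ``thus everything reduces to'' the continuous extension of $G$. But convergence in $\cS'(\wh\H^d)$ of $\e^{-1}\wh\chi(\lam/\e)$ only lets you test against $\theta\in\cS(\wh\H^d)$, whereas what you actually need is to test against $G\theta$; and $G$ (merely bounded and uniformly continuous) is not a multiplier of $\cS(\wh\H^d)$, so $G\theta\notin\cS(\wh\H^d)$ in general. The paper handles this by appealing not to Proposition\refer{ex} but to the more flexible Lemma~3.1 of\ccite{bcdFHspace}, which gives the Dirac-mass limit against any continuous function with the decay $(1+|\lam|(|n+m|+d)+|n-m|)^{-2d-1}$; since $|G|\leq\|g\|_{L^1}$ and $\theta$ is rapidly decreasing, $G\theta$ satisfies that hypothesis. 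Your argument is easily repaired in the same way: rather than citing Proposition\refer{ex}, prove directly (by the change of variable $\lam'=\lam/\e$ and dominated convergence, using $|G|\leq\|g\|_{L^1}$ and the decay of $\theta$) that $\int_{\wh\H^d}\e^{-1}\wh\chi(\lam/\e)G(\wh w)\theta(\wh w)\,d\wh w$ converges to the desired limit.
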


As we shall see, this result  is just an interpretation of Theorem 1.4 of\ccite{bcdFHspace} in terms of tempered distributions.

\medbreak
The rest of the paper unfolds as follows.
  In Section\refer  {rangeofcS},  we prove Lemmas\refer  {decaygivesregul1} and\refer {symmetry0}, and then Theorem\refer {MappingofPH}.
In Section\refer {ExamplescSwhH}, we establish Theorem\refer {radialtypeSdescribtion}.
In Section\refer {examplescS'},  we study in full details the examples of tempered distributions on~$\wh\H^d$ given in  Propositions\refer  {dimhomoconcretwhH}--\ref  {examplepartiefinie}, and Theorem\refer   {indentifonctdistritemp}. 
In Section\refer  {computeFHcS'}, we prove Proposition\refer {Fourierdetaand1}
and Theorem\refer {Fourierhorizontal+}. Further  remarks  as well as  proofs  (within our setting) of known results 
are postponed in the appendix.


\section {The range of the Schwartz class by the Fourier transform}
\label {rangeofcS}

The present section aims at giving  a handy characterization of the range of $\cS(\H^d)$ 
by the  Fourier transform.
Our Ariadne thread throughout will be that we expect that, for the action of~$\cF_\H,$  
 \emph{regularity implies decay}
and  \emph{decay implies regularity}. The answer to the first issue has been given in  
 Lemma\refer{decaylambdan} (proved in \cite {bcdFHspace}). Here we shall  concentrate on the second issue, 
 in connection with the definition of differentiation 
for functions on $\wt\H^d,$ given in  \eqref{decayWignerHermiteeq1} and \eqref {eq:whDlambda}.
To complete our analysis of the space $\cF_\H(\cS(\H^d)),$
we will have to get some information on the behavior of elements of 
$\cF_\H(\cS(\H^d))$ for $\lambda$ going to $0$ (that is in the neighborhood 
of the set $\wh\H^d_0$).  This is Lemma  \ref{symmetry0} that points out  an extra 
and fundamental relationship  between \emph{positive} and \emph{negative}~$\lam$'s. 
\smallbreak
A great deal of our program will be achieved 
by describing  the action of the weight function $M^2$ and of the differentiation 
operator $\partial_\lam$ on $\cW.$ This is the goal of the next paragraph.

\subsection{Some properties for Wigner transform of Hermite functions}

The following lemma describes the action of the weight function $M^2$ on $\cW.$
\begin{lemma}
 \label {Y2WignerHermite}
{\sl
For all~$\wh w$ in~$\wt\H^d$ and $Y$ in~$ T^\star\R^d,$ we have 
 $$|Y|^2\cW(\wh w,Y)  = -\wh\D \cW(\cdot ,Y) (\wh w) $$ 
 where Operator $\wh\D$ has been defined in\refeq{decayWignerHermiteeq1}.
}
\end{lemma}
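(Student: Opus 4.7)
The plan is to decompose $|Y|^2=|y|^2+|\eta|^2$ and to compute the action of $|y|^2$ and of $|\eta|^2$ on the Wigner integral separately, by means of the standard recurrence relations for the rescaled Hermite functions. From the creation and annihilation operators $C_j^{(\lambda)}\eqdefa-\partial_j+|\lambda|M_j$ and $A_j^{(\lambda)}\eqdefa\partial_j+|\lambda|M_j$ --- which satisfy $C_j^{(\lambda)}H_{n,\lambda}=\sqrt{2|\lambda|(n_j+1)}\,H_{n+\delta_j,\lambda}$ and $A_j^{(\lambda)}H_{n,\lambda}=\sqrt{2|\lambda|n_j}\,H_{n-\delta_j,\lambda}$ (which one gets by rescaling the classical relations recorded in \eqref{relationsHHermiteD}) --- one deduces by addition and subtraction
\begin{equation*}
x_jH_{n,\lambda}(x)=\tfrac{1}{\sqrt{2|\lambda|}}\Big(\sqrt{n_j+1}\,H_{n+\delta_j,\lambda}(x)+\sqrt{n_j}\,H_{n-\delta_j,\lambda}(x)\Big),
\end{equation*}
\begin{equation*}
\partial_jH_{n,\lambda}(x)=\sqrt{\tfrac{|\lambda|}{2}}\Big(-\sqrt{n_j+1}\,H_{n+\delta_j,\lambda}(x)+\sqrt{n_j}\,H_{n-\delta_j,\lambda}(x)\Big).
\end{equation*}

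For the $|y|^2$ piece I would write $y_j=\tfrac12((y_j+z_j)-(-y_j+z_j))$ and apply the first recurrence to the variables $u=y+z$ and $v=-y+z$ inside the integral \eqref{definWigner}. For the $|\eta|^2$ piece I would exploit the identity $\eta_je^{2i\lambda\langle\eta,z\rangle}=\tfrac{1}{2i\lambda}\partial_{z_j}e^{2i\lambda\langle\eta,z\rangle}$ and integrate by parts so that $\partial_{z_j}$ falls on $H_{n,\lambda}(u)H_{m,\lambda}(v)$, then invoke the second recurrence. Writing $\cW_{n',m'}\eqdefa\cW((n',m',\lambda),Y)$, the outcome is
\begin{equation*}
y_j\cW_{n,m}=\tfrac{1}{2\sqrt{2|\lambda|}}\Big(\sqrt{n_j+1}\,\cW_{n+\delta_j,m}+\sqrt{n_j}\,\cW_{n-\delta_j,m}-\sqrt{m_j+1}\,\cW_{n,m+\delta_j}-\sqrt{m_j}\,\cW_{n,m-\delta_j}\Big),
\end{equation*}
\begin{equation*}
\eta_j\cW_{n,m}=\tfrac{1}{2i\lambda}\sqrt{\tfrac{|\lambda|}{2}}\Big(\sqrt{n_j+1}\,\cW_{n+\delta_j,m}-\sqrt{n_j}\,\cW_{n-\delta_j,m}+\sqrt{m_j+1}\,\cW_{n,m+\delta_j}-\sqrt{m_j}\,\cW_{n,m-\delta_j}\Big).
\end{equation*}

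Applying each identity a second time expresses $y_j^2\cW_{n,m}$ and $\eta_j^2\cW_{n,m}$ as linear combinations of shifts $\cW_{n',m'}$ indexed by $(n'-n,m'-m)\in\{0,\pm\delta_j,\pm 2\delta_j\}^2$. The key mechanism --- and the main obstacle, as it requires careful bookkeeping --- is that the scalar prefactor of the $\eta_j^2$ expansion is $\bigl(\tfrac{1}{2i\lambda}\bigr)^2\tfrac{|\lambda|}{2}=-\tfrac{1}{8|\lambda|}$, while the prefactor of the $y_j^2$ expansion is $\bigl(\tfrac{1}{2\sqrt{2|\lambda|}}\bigr)^2=\tfrac{1}{8|\lambda|}$. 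Because of this sign flip, every ``off-diagonal'' shift --- namely $\cW_{n\pm 2\delta_j,m}$, $\cW_{n,m\pm 2\delta_j}$, $\cW_{n+\delta_j,m-\delta_j}$ and $\cW_{n-\delta_j,m+\delta_j}$ --- appears with opposite signs in the two expansions and drops out upon addition.

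What survives is, after direct verification of the coefficients,
\begin{equation*}
(y_j^2+\eta_j^2)\cW_{n,m}=\tfrac{1}{2|\lambda|}\Big[(n_j+m_j+1)\cW_{n,m}-\sqrt{(n_j+1)(m_j+1)}\,\cW_{n+\delta_j,m+\delta_j}-\sqrt{n_jm_j}\,\cW_{n-\delta_j,m-\delta_j}\Big].
\end{equation*}
Summing over $j\in\{1,\dots,d\}$ and using $\sum_j(n_j+m_j+1)=|n+m|+d$ together with the shorthand $\wh w_j^{\pm}=(n\pm\delta_j,m\pm\delta_j,\lambda)$ yields precisely $-\wh\D\cW(\cdot,Y)(\wh w)$ in view of the definition \eqref{decayWignerHermiteeq1}, which completes the proof.
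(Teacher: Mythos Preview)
Your argument is correct, but it follows a different route from the paper's own proof. You compute $y_j\cW$ and $\eta_j\cW$ separately (these formulas in fact coincide with \eqref{actionX_jonFHdemoeq3} and \eqref{actionX_jonFHdemoeq4}, derived later in the appendix), square each, and rely on the sign flip between the two prefactors to kill all ``off-diagonal'' shifts. This works, but it requires tracking up to sixteen shift terms and checking cancellations by hand.

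The paper instead keeps $|y|^2$ and $|\eta|^2$ together from the start. It writes $|\eta|^2e^{2i\lambda\langle\eta,z\rangle}=-\tfrac{1}{4\lambda^2}\Delta_z e^{2i\lambda\langle\eta,z\rangle}$, integrates by parts, and uses the algebraic identity $4|y|^2=|y+z|^2+|y-z|^2+2(y+z)\cdot(y-z)$. The terms $|y\pm z|^2$ combine with the pieces of $\Delta_z$ acting on each Hermite factor to form the rescaled harmonic oscillator $-\Delta_{\rm osc}^\lambda$, so the eigenvalue relation \eqref{relationsHHermiteD} produces the diagonal coefficient $(|n+m|+d)/(2|\lambda|)$ in one stroke. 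What remains is $-\tfrac{1}{2|\lambda|}\sum_j(\partial_jH_n\cdot\partial_jH_m+M_jH_n\cdot M_jH_m)$, and the recurrences \eqref{relationsHHermiteCAb} convert this directly into the $\wh w_j^\pm$ shifts with no spurious terms ever appearing. In short, the paper exploits the harmonic-oscillator structure to bypass the bookkeeping that your approach has to carry out explicitly; your approach is more elementary but more laborious.
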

\begin{proof}
{}From the definition of~$\cW$ and  integrations by parts, we get  
\beno
|Y|^2\cW(\wh w,Y) & =  &\int_{\R^d} \Bigl(|y|^2-\frac 1 {4\lam^2} \D_z\Bigr) \bigl(e^{2i\lambda\langle \eta,z\rangle}\bigr)  H_{n,\lam} (y+z) H_{m,\lam} (-y+z)\, dz \\
&= & \int_{\R^d} e^{2i\lambda\langle \eta,z\rangle} |\lam|^{\frac d2} \cI(\wh w,y,z) \,dz \\\with
 \cI(\wh w,y,z)   &\eqdefa &  \Bigl(|y|^2-\frac 1 {4\lam^2} \D_z\Bigr)\bigl (H_{n} (|\lam|^{\frac 12} (y+z))  H_{m} (|\lam|^{\frac 12} (-y+z))\bigr).
\eeno
{}From  Leibniz formula, the chain rule and the following identity:
$$
4|y|^2=|y+z|^2+|y-z|^2+2(y+z)\cdot(y-z),
$$
 we get 
\beno
\cI(\wh w,y,z) & = &  -\frac 1 {4\lam^2} \bigl( (\D_z-\lam^2 |y+z|^2) H_{n} (|\lam|^{\frac 12} (y+z)) \bigr)  H_{m} (|\lam|^{\frac 12} (-y+z)) \\
&&{}
-\frac 1 {4\lam^2} \bigl( (\D_z-\lam^2 |y-z|^2) H_{m} (|\lam|^{\frac 12} (-y+z)) \bigr)  H_{n} (|\lam|^{\frac 12} (y+z)) \\
&&{}-\frac 1 {2|\lam|}\sum_{j=1}^d(\partial_j H_{n}) (|\lam|^{\frac 12} (y+z)) (\partial_j H_{m}) (|\lam|^{\frac 12} (-y+z))\\
&&{}-\frac 12 (z+y)\cdot(z-y) H_{n} (|\lam|^{\frac 12} (y+z))  H_{m} (|\lam|^{\frac 12} (-y+z)).
\eeno
Using\refeq{relationsHHermiteD}, we end  up with
\beno
\cI(\wh w,y,z) &= & \frac 1{2|\lam|} (|n+m|+d) H_{n} (|\lam|^{\frac 12} (y+z))  H_{m} (|\lam|^{\frac 12} (-y+z))
\\
&&\qquad \qquad{}
-\frac 1{2|\lam|} \sum_{j=1}^d\Bigl\{
(\partial_j H_{n}) (|\lam|^{\frac 12} (y+z)) (\partial_j H_{m}) (|\lam|^{\frac 12} (-y+z))\\
&&\qquad \qquad\qquad \qquad\qquad \qquad{}
+(M_j H_{n}) (|\lam|^{\frac 12} (y+z)) (M_j H_{m}) (|\lam|^{\frac 12} (-y+z))\Bigr\}\cdotp
\eeno
Then, taking advantage of\refeq{relationsHHermiteCAb}, 
 we get Identity\refeq  {decayWignerHermiteeq1}.
\end{proof}

\medbreak

The purpose of the following lemma is to investigate the action of   $\partial_\lam$ on $\cW$.
  \begin{lemma}
 \label {decayWignerHermite}
{\sl
 We have, for all~$\wh w$ in~$\wt\H^d$,  the following formula: 
 \beq
\label {decayWignerHermiteeq2}
\begin{aligned}
& \partial_\lam \cW(\wh w,Y)   = -\frac d{2\lam}  \cW(\wh w,Y)\\
 &\qquad\qquad\qquad{}+\frac 1 {2\lam} \sum_{j=1} ^d \Bigl\{ \sqrt {(n_j+1) (m_j+1)}\, \cW(\wh w^+_j,Y) - \sqrt {n_jm_j}\, 
 \cW(\wh w^-_j,Y)\Bigr\}\,\cdotp
 \end{aligned}
 \eeq
 }
 \end{lemma}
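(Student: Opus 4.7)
The plan is to differentiate $\cW(\wh w, Y)$ directly under the integral sign and to recognize that, after suitable integrations by parts and use of creation/annihilation identities, the result has the asserted form. The Wigner transform depends on $\lam$ through three ingredients: the oscillatory phase $e^{2i\lam\langle\eta,z\rangle}$, the normalizing prefactors $|\lam|^{d/4}$ inside each rescaled Hermite function, and the rescaling $|\lam|^{1/2}$ of the spatial arguments. Writing $\partial_\lam \cW$ as a sum of these three pieces and treating each separately will be the backbone of the computation.

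For the phase contribution, I would use the identity
$$2i\langle\eta,z\rangle\, e^{2i\lam\langle\eta,z\rangle} = \frac{1}{\lam}\sum_{k=1}^d z_k \partial_{z_k} e^{2i\lam\langle\eta,z\rangle}$$
to convert the $2i\langle\eta,z\rangle$ factor into a total $z$-derivative and integrate by parts. Since $\partial_{z_k}(z_k\,\cdot)=d+\sum_k z_k\partial_{z_k}$, this contribution produces $-\frac{d}{\lam}\cW(\wh w,Y)$ together with remainders proportional to $z_k\partial_k H_{n,\lam}(y+z)\,H_{m,\lam}(-y+z)$ and its symmetric analogue. For the prefactor-and-scaling contributions, a direct computation valid for either sign of $\lam$ gives
$$\partial_\lam H_{n,\lam}(x) = \frac{d}{4\lam}\,H_{n,\lam}(x) + \frac{1}{2\lam}\sum_{k=1}^d x_k\, \partial_k H_{n,\lam}(x),$$
which applied to $H_{n,\lam}(y+z)$ and $H_{m,\lam}(-y+z)$ yields $\frac{d}{2\lam}\cW(\wh w,Y)$ plus remainders involving $(y+z)_k\partial_k H_{n,\lam}(y+z)$ and $(-y+z)_k\partial_k H_{m,\lam}(-y+z)$. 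Recombining the three remainder pieces and using $(y+z)_k-2z_k=-(-y+z)_k$ and $(-y+z)_k-2z_k=-(y+z)_k$, after the change of variable $u=y+z$, $v=-y+z$ the surviving integrand reduces to
$$-\frac{1}{2\lam}\sum_{k=1}^d\Bigl[(\partial_k H_{n,\lam})(u)\, v_k H_{m,\lam}(v) + u_k H_{n,\lam}(u)\,(\partial_k H_{m,\lam})(v)\Bigr].$$

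The last step is the algebraic one. I would expand $\partial_k$ and $M_k$ in terms of the rescaled creation and annihilation operators $C_k^\lam=-|\lam|^{-1/2}\partial_k+|\lam|^{1/2}M_k$ and $A_k^\lam=|\lam|^{-1/2}\partial_k+|\lam|^{1/2}M_k$, which act diagonally on the rescaled Hermite basis: $C_k^\lam H_{n,\lam}=\sqrt{2(n_k+1)}\,H_{n+\delta_k,\lam}$ and $A_k^\lam H_{n,\lam}=\sqrt{2n_k}\,H_{n-\delta_k,\lam}$. Expanding each of the two products as a sum of four terms, the cross contributions $H_{n\pm\delta_k,\lam}\otimes H_{m\mp\delta_k,\lam}$ appear with opposite signs in the two pieces and cancel exactly, leaving only the diagonally shifted combinations $\sqrt{n_k m_k}\,H_{n-\delta_k,\lam}\otimes H_{m-\delta_k,\lam}$ and $-\sqrt{(n_k+1)(m_k+1)}\,H_{n+\delta_k,\lam}\otimes H_{m+\delta_k,\lam}$. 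Reinserting these into the oscillatory integral produces exactly $\cW(\wh w_k^-,Y)$ and $\cW(\wh w_k^+,Y)$, whence\refeq{decayWignerHermiteeq2}. The main obstacle lies precisely in this algebraic cancellation: it is the vanishing of the off-diagonal $(n\pm\delta_k, m\mp\delta_k)$ components in the expansion of $\partial_k\otimes M_k + M_k\otimes\partial_k$ that forces the final expression to involve only the synchronously shifted multi-indices $\wh w_k^\pm=(n\pm\delta_j,m\pm\delta_j,\lam)$, mirroring the mechanism already seen in the proof of Lemma\refer{Y2WignerHermite}.
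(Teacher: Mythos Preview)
Your argument is correct and follows essentially the same route as the paper's proof: differentiate under the integral, split into the phase contribution and the contribution from the $\lam$-dependence of the rescaled Hermite functions, convert the phase term via $2i\langle\eta,z\rangle e^{2i\lam\langle\eta,z\rangle}=\lam^{-1}\sum_j z_j\partial_{z_j}e^{2i\lam\langle\eta,z\rangle}$ and integrate by parts, then combine the remainders into the cross expression $\partial_j\otimes M_j+M_j\otimes\partial_j$ and expand via\refeq{relationsHHermiteCAb}. The only cosmetic difference is that you work with $H_{n,\lam}$ throughout (using the identity $\partial_\lam H_{n,\lam}(x)=\frac{d}{4\lam}H_{n,\lam}(x)+\frac{1}{2\lam}\sum_k x_k\partial_k H_{n,\lam}(x)$), whereas the paper first extracts the factor $|\lam|^{d/2}$ and computes with $H_n(|\lam|^{1/2}\cdot)$; the algebra and the key cancellation of the off-diagonal $(n\pm\delta_j,m\mp\delta_j)$ terms are identical.
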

 \begin{proof} Let us write that
$$
\begin{aligned}
\partial_\lam \cW(\wh w,Y) &=  \int_{\R^d} \frac d {d\lam} \Bigl( |\lam|^{\frac d 2} e^{2i\lam\langle \eta,z\rangle} H_n (|\lam|^{\frac 12} (y+z)) H_m(|\lam|^{\frac 12}  (-y+z)) \Bigr) dz\\
& =  \frac d {2\lam} \cW(\wh w,Y)  +\cW_1(\wh w,Y) + \cW_2(\wh w,Y)  \with\\
\cW_1(\wh w,Y) &\eqdefa  
 \int_{\R^d} 2i\langle \eta,z\rangle e^{2i\lam\langle \eta,z\rangle}  |\lam|^{\frac d 2}  H_n (|\lam|^{\frac 12} (y+z)) H_m(|\lam|^{\frac 12}  (-y+z))\, dz\ \hbox{and}\\
\cW_2(\wh w,Y) &\eqdefa 
\int_{\R^d}  e^{2i\lam\langle \eta,z\rangle}  |\lam|^{\frac d 2}  \frac d {d\lam} \bigl(H_n (|\lam|^{\frac 12} (y+z)) H_m(|\lam|^{\frac 12}  (-y+z))\bigr)\, dz.
\end{aligned}
$$
As we have 
$$
  2i\langle \eta,z\rangle e^{2i\lam\langle \eta,z\rangle } = \frac 1 \lam \sum_{j=1}^d z_j\partial_{z_j} 
e^{2i\lam\langle \eta,z\rangle } ,
$$
an integration by parts gives
\begin{multline}
 \label {decayWignerHermitedemoeq1}
\cW_1(\wh w,Y)  = -\frac d {\lam}  \cW(\wh w,Y)\\
-\frac1\lambda \sum_{j=1} ^d \int_{\R^d} 
e^{2i\lam\langle \eta,z\rangle}  |\lam|^{\frac d 2}  z_j\partial_{z_j}  \bigl(H_n (|\lam|^{\frac 12} (y+z)) H_m(|\lam|^{\frac 12}  (-y+z))\bigr) dz.
\end{multline}
Now let us compute 
$$
\cJ(\wh w,y,z) \eqdefa \Bigl (\frac d {d\lam} -\frac1\lambda\sum_{j=1}^d z_j\partial_{z_j}\Bigr)  \bigl(H_n (|\lam|^{\frac 12} (y+z)) H_m(|\lam|^{\frac 12}  (-y+z))\bigr).
$$
{}From the chain rule we get
\beno
\cJ(\wh w,y,z)  &= & \frac {|\lam|^{\frac 12} }{2\lam} \sum_{j=1}^d \Bigl\{
 (y_j+z_j) H_m(|\lam|^{\frac 12}  (-y+z)) (\partial_j H_n) (|\lam|^{\frac 12} (y+z)) \\
&&\qquad\qquad{}
+(-y_j+z_j)  H_n (|\lam|^{\frac 12} (y+z))  (\partial_j H_m)(|\lam|^{\frac 12}  (-y+z))\\
&&\qquad\qquad{}
- 2 z_j  H_m(|\lam|^{\frac 12}  (-y+z)) (\partial_jH_n) (|\lam|^{\frac 12} (y+z))\\
&&\qquad\qquad\qquad{}
-2 z_j H_n (|\lam|^{\frac 12} (y+z))  (\partial_j H_m)(|\lam|^{\frac 12}  (-y+z))
\Bigr\}\cdotp
\eeno
This gives 
$$
\longformule{
\cJ(\wh w,y,z)  =  - \frac {1 }{2\lam} \sum_{j=1}^d \Bigl\{
(\partial_j H_n) (|\lam|^{\frac 12} (y+z)) |\lam|^{\frac 12}( -y_j+z_j) H_m(|\lam|^{\frac 12}  (-y+z))
}
{
{}
+ |\lam|^{\frac 12} (y_j+z_j)H_n (|\lam|^{\frac 12} (y+z))  (\partial_j H_m)(|\lam|^{\frac 12}  (-y+z))\Bigl\}
}
$$
which writes
$$
\longformule{
\cJ(\wh w,y,z)  =  - \frac {1 }{2\lam} \sum_{j=1}^d \Bigl\{
(\partial_j H_n) (|\lam|^{\frac 12} (y+z))(M_jH_m)(|\lam|^{\frac 12}  (-y+z))
}
{
{}
+ (M_jH_n)(|\lam|^{\frac 12} (y+z))  (\partial_j H_m)(|\lam|^{\frac 12}  (-y+z))\Bigl\}\cdotp
}
$$
Using  Relations\refeq {relationsHHermiteCAb} completes the proof of the Lemma.
 \end{proof}


\subsection{Decay  provides regularity} \label{Regularity} 

Granted with Lemmas\refer {Y2WignerHermite} and\refer{decayWignerHermite}, it is now easy
to establish Lemma\refer{decaygivesregul1}.
Indeed,  according to\refeq {definFourierWigner}, we have
$$
(\cF_\H M^2f)(\wh w)  =   \int_{\H^d} e^{-is\lam}  f(Y,s) |Y|^2  \ov\cW(\wh w,Y) \,dY\,ds.
$$
Therefore, Lemma \refer{Y2WignerHermite}  implies that 
$$\displaylines{\quad
(\cF_\H M^2f)(\wh w)   = \frac 1{2|\lam|} ( |n+m| +d)  \int_{\H^d} f(Y,s)e^{-is\lam} \ov\cW(\wh w,Y)\,dY\,ds\hfill\cr\hfill
-\frac 1 {2|\lam|} \sum_{j=1} ^d \Bigl\{ \sqrt {(n_j+1) (m_j+1)}  \int_{\H^d}  f(Y,s) e^{-is\lam}
 \ov\cW(\wh w_j^+,Y)\,dY\,ds\hfill\cr\hfill
+\sqrt {n_jm_j} \int_{\H^d}  f(Y,s) e^{-is\lam}  \ov\cW(\wh w_j^-, Y) \,dY\,ds \Bigr\}\cdotp\quad}$$
By the definition of the Fourier transform and of $\wh\D,$ this gives $\cF_{\H} M^2 f=  -\wh \D\cF_\H f.$
\medbreak
To establish\refeq{eq:whDlambda}, we start from \refeq{definFourierWigner} and get
$$\begin{aligned}
 \cF_\H(M_0 f)(\wh w) & =   \int_{\H^d} \frac d {d\lam} \bigl(e^{-is\lam} \bigr)  f(Y,s) \ov\cW(\wh w,Y) \,dY\,ds\\
 & =  \frac d {d\lam}   (\cF_\H f)(\wh w)
 -  \int_{\H^d} e^{-is\lam}   f(Y,s)   \frac d {d\lam} \bigl( \ov\cW(\wh w,Y)  \bigr) \,dY\,ds.
\end{aligned}
$$
Rewriting the last term according to  Formula\refeq {decayWignerHermiteeq2}, we discover that 
$$\displaylines{\quad
(\cF_\H M_0f)(\wh w)   =    \frac d {d\lam}   (\cF_\H f)(\wh w) 
+\frac d{2\lam}  \int_{\H^d} f(Y,s)e^{-is\lam} \ov\cW(\wh w,Y)\,dY\,ds\hfill\cr\hfill
-\frac 1 {2\lam} \sum_{j=1} ^d \Bigl\{ \sqrt {(n_j+1) (m_j+1)}  \int_{\H^d}  f(Y,s) e^{-is\lam}
 \ov\cW(\wh w_j^+,Y) \,dY\,ds\hfill\cr\hfill
-\sqrt {n_jm_j} \int_{\H^d}  f(Y,s) e^{-is\lam}  \ov\cW(\wh w_j^-, Y)\, dY\,ds \Bigr\}\cdotp\quad}
$$
By the definition of the Fourier transform,  this  concludes the proof of
Lemma\refer{decaygivesregul1} .
 \qed
 
\medbreak

On the one hand, Lemmas\refer{decaylambdan} and\refer{decaygivesregul1} guarantee 
 that decay in the physical space provides regularity in the Fourier space, 
and that regularity gives decay.  On the other hand,  the relations we established so 
far do not give much insight  on the behavior of the Fourier transform near $\wh\H_0^d$
even though we know from   Theorem\refer{FourierL1basicbis} that
in the case of an integrable function, it has to be  uniformly continuous \emph{up to $\lambda=0$}.
Getting more information  on the behavior of 
the Fourier transform of functions in $\cS(\H^d)$ in a neighborhood of $\wh\H_0^d$ 
is what we want to do now with the proof of Lemma\refer {symmetry0}. 
\begin{proof}[Proof of Lemma\refer {symmetry0}]
Fix some function $f$ in $\cS(\H^d),$ and observe that
$$
\partial_s \cP f (Y,s) =  \frac 12 \bigl(f(Y,s) -f(Y,-s)\bigr)\quad\hbox{with } \cP \ \hbox{defined in \eqref{eq:cP}}.
$$
Taking the Fourier transform with respect to the variable~$s$ gives
\begin{equation}\label{eq:FsP}
i\lam \cF_s (\cP f )(Y,\lam) =  \frac 12 \bigl(\cF_sf(Y,\lam) -\cF_sf(Y,-\lam)\bigr).
\end{equation}
Let us consider a function~$\chi$ in~$\cD(\R)$ with value~Ê$1$ near~$0$ and let us write 
$$
i \cF_s (\cP f )(Y,\lam) = \frac {1-\chi(\lam)} {2\lam} \bigl(\cF_sf(Y,\lam) -\cF_sf(Y,-\lam)\bigr)
+\chi(\lam) \int_0^1 (\partial_\lam\cF_sf) (Y, -\lam+2t\lam) dt.
$$
It is obvious that the two terms in the right-hand side belong to~$\cS(\R^{2d+1})$.  Thus the operator
$$
\f \longmapsto  \frac { \f(Y,\lam) -\f(Y,-\lam)} {2\lam} 
$$
maps continuously~$\cS(\R^{2d+1})$ to~$\cS(\R^{2d+1}).$
Hence $\cP$ maps continuously~$\cS(\H^d)$ to~$\cS(\H^d).$
\medbreak
Note that in the case of a function $g$ in $\cS(\H^d)$, Formula\refeq{definFourierWigner} may be alternately written:
\begin{equation}
\cF_\H g(\wh w)=\int_{T^\star\R^d} \cF_sg(Y,\lam)\ov\cW(\wh w,Y)\,dY\quad\hbox{for all }\ \wh w=(n,m,\lam)
\hbox{ in }\ \wt\H^d.
\end{equation}
Relations \eqref{eq:symW} and \eqref{eq:FsP} guarantee that
$$\begin{aligned}
2 i\lam \cF_\H (\cP f )(\wh w) &= \int_{T^\star\R^d}2i\lam \cF_s(\cP f)(Y,\lam)\ov\cW(\wh w,Y)\,dY\\
&= \int_{T^\star\R^d}\bigl(\cF_sf(Y,\lam)-\cF_sf(Y,-\lam)\bigr)\ov\cW(\wh w,Y)\,dY\\
&= \int_{T^\star\R^d}\cF_sf(Y,\lam)\ov\cW(\wh w,Y)\,dY\\
&\qquad\qquad\qquad\qquad\qquad  {}
-(-1)^{|n+m|} \!\int_{T^\star\R^d}\!\cF_sf(Y,-\lam)\ov\cW(m,n,-\lam, Y)\,dY\\
&=\cF_\H f(n,m,\lam)- (-1)^{|n+m|} \cF_\H f (m,n,-\lam),
\end{aligned}$$
which completes the proof of Lemma\refer{symmetry0}.
\end{proof}


\subsection{Proof of the inversion theorem in the Schwartz space}
The aim of this section is to prove Theorem \ref {MappingofPH}.
To this end, let us first note that from Inequality\refeq{eq:decay} and Lemmas\refer{decaygivesregul1} and \refer{symmetry0}, we gather that~$\cF_\H$ maps $\cS(\H^d)$ to $\cS(\wh\H^d).$
In addition,  \refeq{control} guarantees that all elements of $\cS(\wh\H^d)$ are 
in $L^1(\wh\H^d)\cap L^2(\wh\H^d).$ 

Hence Theorem\refer{inverseFourier-Plancherel}  ensures that $\cF_\H: \cS(\H^d)\to\cS(\wh\H^d)$ is one-to-one, and that  the inverse map has to be the functional $\wt\cF_\H$ defined in\refeq{MappingofPHdemoeq1b}. 
Therefore, there only  remains to prove that~$\wt\cF_\H$ maps $\cS(\wh\H^d)$ to $\cS(\H^d).$ 
To this end, it is convenient to introduce  the following  semi-norms:
 \begin{equation}\label{def:sn}
\|\ f \|_{K,\cS(\H^d)}  \eqdefa \sqrt{\|f\|_{L^2(\H^d)}^2 +\| M_\H^K f\|_{L^2(\H^d)}^2+\|\Delta_\H^K  f\|_{L^2(\H^d)}^2}
\with M_\H\eqdefa M^2+M_0,
\end{equation}
which are equivalent to the classical ones defined in Lemma \ref{decaylambdan} (see Prop. \ref{p:schwartz}).
\medbreak
Let us compute~$M^2\wt\cF_\H \theta (Y,s)$.  According to Lemma\refer{Y2WignerHermite}, we have for all $\wh w=(n,m,\lambda)$ in~$\wt\H^d,$
$$\displaylines{
\sum_{(n,m)\in \N^{2d} } \theta(\wh w) |Y|^2\cW (\wh w,  Y)= 
\frac 1 {2|\lam|}\sum_{(n,m)\in \N^{2d}}\bigg(  
 (|n+m| +d) \cW(\wh w,  Y)\theta(\wh w)\hfill\cr\hfill
- \sum_{j=1}^d \sqrt {n_jm_j} \,  \theta(\wh w) \cW(\wh w_j^-,  Y)
- \sum_{j=1}^d \sqrt {(n_j+1)(m_j+1)} \,\theta (\wh w)  \cW(\wh w_j^+, Y)\biggr)\cdotp}
$$
Changing variable~$(\wt n,\wt m) = (n+\d_j,m+\d_j)$ and~$(\wt n,\wt m) = (n-\d_j,m-\d_j),$ respectively, gives
$$
\begin{aligned}
\sum_{(n,m)\in \N^{2d} } \theta(\wh w) |Y|^2\cW (\wh w, Y)& =  
\frac 1 {2|\lam|} \sum_{(n,m)\in \N^{2d} } \bigg( 
(|n+m| +d) \theta(\wh w) \cW(\wh w,  Y)\\
&\quad - \sum_{j=1}^d \biggl(\sqrt {(n_j\!+\!1)(m_j\!+\!1)} \,\theta (\wh w_j^+)  
+ \sqrt {n_jm_j} \,  \theta(\wh w_j^-)\biggr)\bigg) \cW(\wh w,  Y)\\
&=  -\sum_{(n,m)\in \N^{2d} }  \wh \D \theta (\wh w) \cW(\wh w,  Y)
\end{aligned}
$$
where  $\wh \D$ is the operator introduced in\refeq{decayWignerHermiteeq1}.
\medbreak
Multiplying by $2^{d-1}\pi^{-d-1}e^{is\lambda},$ integrating with respect 
to $\lambda$ and remembering\refeq{MappingofPHdemoeq1b},  we end up with
\beq
\label    {MappingofPHdemoeq3}
(M^2 \wt\cF_\H \theta)(Y,s)  = - \wt\cF_\H (\wh\D\theta)(Y,s).
\eeq
Understanding how~$M_0$ acts on $\wt\cF_\H(\cS(\wh\H^d))$ is more delicate. It requires our using  the continuity
property of Definition\refer {definrangeS}.
Now,  if  $\theta$ is in $\cS(\wh\H^d)$  then it is integrable. As obviously $|\cW|\leq 1,$ one may thus write
for all $w=(Y,s)$ in $\H^d,$ 
    denoting~$\R_\e\eqdefa  \R\setminus[-\e,\e]$,
\beno
(M_0 \wt\cF_\H \theta )(w)  & = & \frac {2^{d-1} } {\pi^{d+1} } \lim_{\e\rightarrow 0} \sum _{(n,m)\in \N^{2d}} 
\Psi_\e(n,m,w) \with\\
\Psi_\e(n,m,w) &\eqdefa & 
- \int_{\R_\e} 
\Bigl( \frac d {d\lam} e^{is\lam}\Bigr)  \theta (n,m,\lam) 
\cW(n,m,\lam, Y) |\lam|^d d\lam\,.
\eeno
Integrating by parts yields
$$
\begin{aligned}
\Psi_\e(n,m,w) & = \Psi^{(1)} _\e(n,m,w) - \Psi^{(2)} _\e(n,m,w) \with\\
\Psi^{(1)} _\e(n,m,w) & \eqdefa     
 \int_{\R_\e} e^{is\lam} \frac d {d\lam} \bigl( \cW(n,m,\lam, Y)\theta (n,m,\lam) |\lam|^d\bigr) d\lam
\andf \\
 \Psi^{(2)} _\e(n,m,w) & \eqdefa   \e^d \bigl( e^{is\e}  \cW (n,m,\e, Y)\theta (n,m,\e)- e^{-is\e}  \cW (n,m,-\e, Y) \theta (n,m, -\e)\bigr).
 \end{aligned}$$
 Let us compute
\beq
\label {MappingofPHdemoeq5}
   \Theta( \wh w, Y)\eqdefa \frac d {d\lam} \bigl( \cW(n,m,\lam,Y)\theta (n,m,\lam) |\lam|^d\bigr).
 \eeq
 Leibniz formula  gives
 $$
   \Theta (\wh w, Y)  =   \partial_\lam \cW(\wh w,Y)\theta(\wh w) |\lam|^d+    \cW(\wh w, Y) \frac d {d\lam} \bigl(  |\lam|^d\theta (\wh w)\bigr) .
 $$ 
 Hence, remembering Identity\refeq  {decayWignerHermiteeq2}, we discover that
$$\displaylines{
\Theta(\wh w,Y)  =   \frac {d\theta} {d\lam}(\wh w) \cW(\wh w,Y)  |\lam|^d + \frac d {2\lam}  \theta(\wh w) \cW (\wh w, Y)  |\lam|^d\hfill\cr\hfill
-\frac{|\lam|^d}{2\lam}\sum_{j=1} ^d   \theta(\wh w)  \Bigl(\sqrt {n_jm_j}\, \cW(\wh w_j^-,\lam, Y) 
 - \sqrt {(n_j+1)(m_j+1)} \cW (\wh w_j^+, Y)\Bigr).}
 $$
{}From the changes of  variable~$(n',m')=(n-\d_j,m-\d_j)$ and~$(\wt n,\wt m)=(n+\d_j,m+\d_j)$, we infer that 
$$
\displaylines {
\sum_{(n,m)\in \N^{2d}}   \theta(\wh w)  \Bigl(\sqrt {n_jm_j} \,\cW(\wh w_j^-, Y) - \sqrt {(n_j\!+\!1)(m_j\!+\!1)}\, \cW (\wh w_j^+, Y)\Bigr) 
\hfill\cr\hfill
=-\!\!\!\sum_{(n,m)\in \N^{2d}} \!\!  \cW(\wh w,Y)   \Bigl(\sqrt {n_jm_j} \,\theta (\wh w_j^-) - \sqrt {(n_j\!+\!1)(m_j\!+\!1)}\, \theta (\wh w_j^+)\Bigr).
}
$$
Therefore, using the operator~$\wh\cD_\lam$ introduced in Lemma\refer{decaygivesregul1}, we get
\beq
\label {MappingofPHdemoeq6}
\sum_{(n,m)\in\N^{2d} } \Psi^{(1)} _\e(n,m,w) =  \sum_{(n,m)\in\N^{2d} } \int_{\R_\e} 
e^{is\lam} (\wh\cD_\lam\theta)(n,m,\lam)\cW (n,m,\lam, Y)  |\lam|^d d\lam.
\eeq
Now let us study the term~$ \Psi^{(2)} _\e(n,m,w)$. We have
\beno
 &&e^{is\e}  \cW (n,m, \e, Y)\theta (n,m, \e)- e^{-is\e}  \cW (n,m,-\e, Y)\theta (n,m,-\e)\\
 &&\qquad\qquad\qquad\qquad{}=
 \bigl( e^{is\e} - e^{-is\e}\bigr) \cW (n,m,\e,  Y)\theta (n,m,\e)\\
&&\qquad\qquad\qquad\qquad\qquad{}
 +e^{-is\e}\bigl(\cW( n,m,\e, Y)\theta (n,m,\e)-  \cW (n,m,-\e,Y)\theta (n,m,-\e) \bigr). 
\eeno
Hence, thanks to\refeq{eq:symW} 
$$
\displaylines{
\sum_{(n,m)\in\N^{2d} }   \Psi^{(2)} _\e(n,m,w)  =  2i \, \e^d \sin (s \e)   \sum_{(n,m)\in\N^{2d} }  \cW (n,m,\e, Y)\theta (n,m,\e)
\hfill\cr\hfill
+ \e^d e^{-is\e}  \Bigl(\sum_{(n,m)\in\N^{2d} }   \cW (n,m,\e, Y)\theta(n,m, \e) 
- \sum_{(n,m)\in\N^{2d} }  (-1)^{|n+m|}\cW (m,n,\e, Y)\theta(n,m,-\e)\Bigr)\cdotp}
$$
Swapping indices~$n$ and $m$  in the last sum gives
$$
\longformule{
\sum_{(n,m)\in\N^{2d} }   \Psi^{(2)} _\e(n,m,w)  =  2i \e^d \sin (s\e)\,   \sum_{(n,m)\in\N^{2d} }  \cW (n,m,\e, Y)\theta (n,m,\e)
}
{{} + \e^{d+1}  e^{-is\e} \sum_{(n,m)\in\N^{2d} }   \cW (n,m,\e, Y)(\wh\Sigma_0\theta)(n,m,\e) .
}
$$
Remembering  that $|\cW| \leq 1,$  we thus get 
\begin{equation}\label{eq:psi2}
\biggl|\sum_{(n,m)\in\N^{2d} }   \Psi^{(2)} _\e(n,m,w)  \biggr|\leq \ep^{d+1}\biggl(
2|s| \sum_{(n,m)\in\N^{2d}}|\theta(n,m,\e)|+ \sum_{(n,m)\in\N^{2d}} |(\wh\Sigma_0\theta)(n,m,\e)|\biggr).  
\end{equation}
Now, let us use the fact that  we have
$$
\sum_{(n,m)\in\N^{2d}}|\theta(n,m,\e)|\leq \|\theta\|_{2d+2,0,\cS(\wt\H^d)}
\sum_{(n,m)\in\N^{2d}} \bigl(1+\ep(|n+m|+d)+|n-m|\bigr)^{-2d-2}.
$$
We observe that
$$
\begin{aligned}
\sum_{(n,m)\in\N^{2d}} \!\!\bigl(1\!+\!\ep(|n\!+\!m|+d)+|n-m|\bigr)^{-2d-2}
& \leq  \sum_{\ell\in\N^{d}} \bigl(1\!+\!\ep(|\ell|+d)\bigr)^{-d-1}
 \sum_{k\in\Z^{d}} \bigl(1+|k|\bigr)^{-d-1}\\
&\leq C  \e^{-d}.
\end{aligned}$$
Hence  the first term of the right-hand side of \eqref{eq:psi2} tends to $0$ when $\e$ goes to $0.$

Employing the same argument  with $\wh\Sigma_0\theta$  
guarantees that the last term  of \eqref{eq:psi2} tends to $0$ when $\e$ goes to $0.$
Therefore, we do have 
$$
\lim_{\e\rightarrow  0} \sum_{(n,m)\in\N^{2d} }  \Psi^{(2)} _\e(n,m,w)  =0.
$$
Using that~$\wh\cD_\lam \theta$ belongs to~$\cS(\wh \H^d)$ and is thus integrable,
 we deduce from\refeq   {MappingofPHdemoeq6} that
$$
\lim_{\e\rightarrow 0}  \sum_{(n,m)\in\N^{2d} } \Psi^{(1)} _\e(n,m,w) =  \int_{\wh \H^d} e^{is\lam}  (\wh\cD_\lam\theta)(\wh w)\cW (\wh w, Y) d\wh w.\\
$$

Thus this gives
\beq
\label {MappingofPHdemoeq7-1}
M_0 \wt\cF_\H \theta = \wt\cF_\H \wh\cD_\lam \theta.
\eeq
Together with\refeq   {MappingofPHdemoeq3}, this implies that
\beq
\label {MappingofPHdemoeq7}
M_\H  \wt\cF_\H \theta =  \wt\cF_\H\bigl( (-\wh \D +\wh\cD_\lam)(\theta)\bigr)\with M_\H\eqdefa M^2+M_0.
\eeq
Hence we can conclude that for any integer $K,$ there exist an integer $N_K$ and a constant  $C_K$ so that 
\begin{equation}\label{eq:MK}
\|M_\H^K  \wt\cF\theta\|_{L^2(\H^d)}\leq C_K\|\theta\|_{N_K,N_K,\cS(\wh\H^d)}.
\end{equation}
Finally, to study the action of the  Laplacian on $\wt\cF_\H(\cS(\wh\H^d),$ we write that by definition of $\cX_j$ and of $\cW,$ we have
\beno
\cX_j \bigl(e^{is\lam} \cW (\wh w, Y) \bigr) & = & \int_{\R^d}  \cX_j \bigl(e^{is\lam +2i\lam \langle \eta,z\rangle} H_{n,\lam}(y+z)
H_{m,\lam} (-y+z)\bigr) dz\\
& = & \int_{\R^d}  e^{is\lam +2i\lam \langle \eta,z\rangle }
\bigl( 2i \lam \eta_j +\partial_{y_j}\bigr) \bigl( H_{n,\lam}(y+z)
H_{m,\lam} (-y+z)\bigr) dz.
\eeno
As~$2i \lam \eta_j e^{2i\lam \langle \eta,z\rangle } = \partial_{z_j}(e^{2i\lam \langle \eta,z\rangle })$,  integrating by parts yields 
\beq
\label {MappingofPHdemoeq8}
\cX_j \bigl(e^{is\lam} \cW (\wh w, Y) \bigr) =  \int_{\R^d}  e^{is\lam +2i\lam \langle \eta,z\rangle }
\bigl( \partial_{y_j}-\partial_{z_j}\bigr) \bigl( H_{n,\lam}(y+z)
H_{m,\lam} (-y+z)\bigr) dz.
\eeq
The action of~$\Xi_j$ is simply described by
\beno
\Xi_j \bigl(e^{is\lam} \cW (\wh w, Y) \bigr)  & = & \int_{\R^d}  \Xi_j \bigl(e^{is\lam +2i\lam \langle \eta,z\rangle}\bigr) H_{n,\lam}(y+z)
H_{m,\lam} (-y+z)\, dz\\
& = & \int_{\R^d}  e^{is\lam +2i\lam \langle \eta,z\rangle}
2i\lam(z_j -y_j)  H_{n,\lam}(y+z)
H_{m,\lam} (-y+z)\, dz.
\eeno
Together with\refeq  {MappingofPHdemoeq8} and the definition of $\D_\H$ in\refeq{defLaplace}, 
this gives
$$\begin{aligned}
\D_\H \bigl(e^{is\lam} \cW(\wh w,Y)\bigr)  &=4\int _{\R^d}  e^{is\lam +2i\lam \langle \eta,z\rangle} 
H_{n,\lam}(y+z)
(\Delta_{osc}^\lambda H_{m,\lam}) (-y+z)\, dz\\&= -4|\lam| (2|m|+d) e^{is\lam} \cW(\wh w,Y).\end{aligned}
$$
This implies  that for all integer $K,$ we have
$$
(-\D_\H)^K (\wt\cF_\H  \theta)  = \wt\cF_\H  \wh M^K \theta \with \wh M \theta (n,m,\lam) \eqdefa 4|\lam|(2|m|+d)\theta (n,m,\lam),
 $$
 whence  there exist an integer $N_k$ and a constant  $C_K$ so that 
\begin{equation}\label{eq:DeltaK}
\|(-\D_\H)^K(\wt\cF_\H\theta)\|_{L^2(\H^d)}\leq C_k\|\theta\|_{N_K,N_K,\cS(\wh\H^d)}.
\end{equation}
Putting \eqref{eq:MK} and \eqref{eq:DeltaK} together and remembering the definition of the semi-norms
on~$\cS(\H^d)$ given in \eqref{def:sn}, we conclude that  for all integer $K,$ there exist an integer $N_K$ and a constant~$C_K$ so that 
$$
\|\wt\cF_\H\theta\|_{K,\cS(\H^d)}\leq C_K\|\theta\|_{N_K,N_K,\cS(\wh\H^d)}.
$$
This completes  the proof of Theorem\refer{MappingofPH}.


\section {Examples of functions in the range of the Schwartz class}
\label {ExamplescSwhH}

The purpose of this section is to prove Theorem \ref {radialtypeSdescribtion}. Let us recall the notation 
$$
\Theta_f\bigl(\wh w)\eqdefa f\bigl (|\lam|R(n,m) ,m-n,\lam\bigr)\with R(n,m)\eqdefa (n_j+m_j+1)_{1\leq j\leq d}.
$$
For any function~$f$ in~$\cS_d^+$ which is either supported in $[0,\infty[^d\times\{0\}\times\R$ or
in $[r_0,\infty[^d\times\Z^d\times\R$ for some positive real number $r_0$ and satisfies \eqref{eq:fsym}, 
 the fact that~$\|\Theta_f\|_{N,0,\cS(\wh \H ^{d})}$ is finite for all integer $N$ is obvious. We  next have to study the action of~$\wh \D$ and~$\wh\cD_\lam$   on~$\Theta_f$.  To this end, we shall  establish a Taylor type expansion of~$\wh \D \Theta_f$ and~$\wh\cD_\lam\Theta_f$ near~$\lam=0$.  To explain what kind of convergence we 
 are looking for, we need the following definition.
 \begin{definition}
\label {definequivonHtilde}
{\sl
Let~$M$ be an integer. We say that two continuous functions~$\theta$ and~$\theta'$ on~$\wh \H^d$ are~$M$-equivalent 
(denoted  by~$\theta\equivH{M} \theta'$)  if 
for all positive integer~$N$, a constant~$C_{N,M}$ exists such that 
$$
\forall \wh w\in \wt \H^d\,,\ |\theta(\wh w)-\theta'(\wh w)| \leq C_{N,M} |\lam|^M (1+|\lam|(|n+m|+d)+|m-n|)^{-N}.
$$
}
\end{definition}
Let us first observe that, if $M\geq1$ then
\beq
\label {equivonHtildeeq0}
\theta\equivH M 0\Longrightarrow \|\theta\|_{N,0,\cS(\wh \H ^{d})}<\infty\ \hbox{ for all integer }N.
\eeq
Furthermore, whenever $0\leq M_0\leq M,$ we have
\beq
\label {equivonHtildeeq1} 
\theta\equivH {M} \theta' \Longrightarrow |\lam|^{-M_0}  \theta\equivH {M-M_0} |\lam|^{-M_0} \theta',
\eeq
and it is obvious that if~$P$ is a function  bounded by a polynomial in~$(n,m)$ with total degree~$M_0$, then
\beq
\label {equivonHtildeeq2} 
\theta\equivH {M} \theta' \Longrightarrow P(n,m)\,\theta\equivH {M-M_0} P(n,m)\,\theta'.
\eeq 
Finally, note  that the definition of~$\wh \D$ in  \eqref{decayWignerHermiteeq1} implies that 
\beq
\label {equivonHtildeeq3}
\theta\equivH {M} \theta' \Longrightarrow \wh \D \theta\equivH {M-2} \wh\D\theta'.
\eeq
We have the following lemma.
\begin{lemma}
\label {TaylorHtilde}
{\sl
For any positive integer~$M$, we have
$$ 
\forall \wh w \in \wt \H^d\,,\ \Theta_f (\wh w_j^\pm)  \equivH { M+{\rm 1}} 
   \sum_{\ell=0} ^{M} \frac {  (\pm2|\lam|)^\ell } {\ell!} \Theta _{\partial_{x_j}^\ell f}(\wh w).
 $$
 }
\end{lemma}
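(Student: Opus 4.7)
The plan is to recognize that the desired expansion is nothing but Taylor's formula in the first variable of~$f$, applied at the point $|\lam|R(n,m)$ with increment $\pm 2|\lam|\delta_j$. I would first observe that $R(n\pm\delta_j,m\pm\delta_j)=R(n,m)\pm 2\delta_j$ (with $\delta_j$ viewed as a vector of~$\R^d$) and that $(m\pm\delta_j)-(n\pm\delta_j)=m-n$, hence
\[
\Theta_f(\wh w_j^\pm)=f\bigl(|\lam|R(n,m)\pm 2|\lam|\delta_j,\,m-n,\,\lam\bigr).
\]
Thus, compared with $\Theta_f(\wh w)=f(|\lam|R(n,m),m-n,\lam)$, the only change is a shift of the $j$-th coordinate of the first argument by $\pm 2|\lam|$, while $k=m-n$ and $\lam$ are untouched.

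Next, applying Taylor's formula with integral remainder at order~$M+1$ to the smooth map $t\mapsto f(|\lam|R(n,m)\pm 2t|\lam|\delta_j,\,m-n,\,\lam)$ between $t=0$ and $t=1$, I would obtain
\[
\Theta_f(\wh w_j^\pm)=\sum_{\ell=0}^{M}\frac{(\pm 2|\lam|)^\ell}{\ell!}\bigl(\partial_{x_j}^\ell f\bigr)\bigl(|\lam|R(n,m),m-n,\lam\bigr)+r^\pm_{M+1}(\wh w),
\]
where
\[
r^\pm_{M+1}(\wh w)\eqdefa\frac{(\pm 2|\lam|)^{M+1}}{M!}\int_0^1(1-t)^M\bigl(\partial_{x_j}^{M+1}f\bigr)\bigl(|\lam|R(n,m)\pm 2t|\lam|\delta_j,\,m-n,\,\lam\bigr)dt.
\]
By the very definition of $\Theta_g$, the main terms coincide with $\sum_{\ell=0}^M\frac{(\pm 2|\lam|)^\ell}{\ell!}\Theta_{\partial_{x_j}^\ell f}(\wh w)$. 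Since $\cS_d^+$ is stable by differentiation, $\partial_{x_j}^{M+1}f$ still belongs to~$\cS_d^+$, so only $r^\pm_{M+1}$ remains to be controlled.

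The last step is to check that $r^\pm_{M+1}\equivH{M+1}0$. Using the decay $|\partial_{x_j}^{M+1}f(x,k,\lam)|\leq C_{N,M}(1+x_1+\cdots+x_d+|k|)^{-N}$ for every $N$, together with the identity $\sum_i x_i=|\lam|(|n+m|+d)\pm 2t|\lam|$ at the shifted evaluation point, I would establish the comparison
\[
1+\textstyle\sum_i x_i+|m-n|\geq c\bigl(1+|\lam|(|n+m|+d)+|m-n|\bigr)
\]
for some constant $c>0$ independent of $\wh w$ and of $t\in[0,1]$. The $+$ case is immediate; the $-$ case relies on the observation that $\wh w_j^-\in\wt\H^d$ forces $n_j,m_j\geq 1$, which yields $|n+m|+d\geq 3$ and makes the loss $-2|\lam|$ absorbable into $|\lam|(|n+m|+d)$. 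Plugging this comparison into the pointwise bound and integrating in $t$ then gives $|r^\pm_{M+1}(\wh w)|\leq C'_{N,M}|\lam|^{M+1}\bigl(1+|\lam|(|n+m|+d)+|m-n|\bigr)^{-N}$, which is exactly the $\equivH{M+1}$-decay required. I expect this uniform comparison to be the only non-routine point of the argument; once it is noted, the rest is a direct application of Taylor's formula together with the Schwartz-type bounds built into the definition of $\cS_d^+$.
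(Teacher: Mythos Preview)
Your proposal is correct and follows essentially the same route as the paper's proof: Taylor's formula with integral remainder in the $x_j$-variable, followed by the $\cS_d^+$ decay estimate on $\partial_{x_j}^{M+1}f$ to show the remainder is $\equivH{M+1}0$. Your write-up is in fact slightly more explicit than the paper's, which records only the $R_j^+$ bound and leaves the comparison of weights implicit; your remark that $\wh w_j^-\in\wt\H^d$ forces $n_j,m_j\geq1$ (so that $|n+m|+d\geq3$ absorbs the $-2|\lam|$ loss) is exactly the missing detail there.
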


\begin{proof}
Performing a Taylor expansion at order~$M+1$, we get 
$$
\longformule{
f \bigl(|\lam| R(n\pm \d_j, m\pm \d_j),m-n,\lam\bigr)=
   \sum_{\ell=0} ^{M} \frac {(\pm2|\lam|)^\ell } {\ell!}   (\partial_{x_j}^\ell f)(|\lam|R(n,m),m-n,\lam\bigr)
 }
 { {}+ (\pm2|\lam|)^{M+1}  \int_0^1  \frac  {(1-t)^{M}} {M!}  
   (\partial_{x_j}^{M+1} f)\bigl( |\lam|R^\pm_j(n,m,t),m-n,\lam\bigr)\,dt
 }
$$
with $R^\pm_j (n,m,t) \eqdefa \bigl (n_1+m_1+1,\cdots, n_j+m_j+1\pm 2t, \cdots , n_d+m_d+1\bigr)$. The fact that~$f$ belongs to~$\cS_d^+$  implies that  for any positive integer~$N$, we have
$$
\biggl | \int_0^1  \frac  {(1-t)^{M}} {M!}  
   (\partial_{x_j}^{M+1} f)\bigl( |\lam|R^+_j(n,m,t),m-n,\lam\bigr)\,dt \biggr|
   \leq C_N\bigl(1+|\lam|(|n+m|+d)+|m-n|\bigr)^{-N}.
$$
This gives the lemma.
 \end{proof}

\medbreak

One can now tackle  the proof of Theorem\refer{radialtypeSdescribtion}.
Let us first investigate the (easier) case when the support of $f$ is included in~$[0,\infty[^d\times\{0\}\times \R$. The first step consists in computing an equivalent (in the sense of Definition\refer  {definequivonHtilde}) of~$\wh\Delta\Theta_f$ at  an order which will  be chosen later on. For notational simplicity, we here set $R(n)\eqdefa R(n,n)$ and omit the second variable of $f.$
Now, by definition of the operator~$\wh \D$, we have
\beq
\label  {radialSdescribtiondemoeq0}
\begin{aligned}
(-\wh \Delta \Theta_f )(n,\lam) &= \frac 1 {2 |\lam|} \Bigl( (|2n| +d)  f\bigl (|\lam|R(n),\lam\bigr)
-\sum_{j=1}^d  \wt \D_j (n,\lam)\Bigr)\with\\\
 \wt \D_j (n,\lam) & \eqdefa 
(n_j+1)f \bigl(|\lam|R(n+2\d_j),\lam\bigr) 
 +
n_j f \bigl(|\lam|R(n-2\d_j),\lam\bigr).
\end{aligned}
\eeq
Lemma\refer  {TaylorHtilde}, and Assertions\refeq  {equivonHtildeeq1} and\refeq  {equivonHtildeeq2}  imply that
\beno 
\frac 1 {2|\lam|} \wt \D_j (n,\lam) &\equivH {2M-1} & \frac {n_j+1} {2|\lam|}   
 \sum_{\ell=0} ^{2M} \frac {(2|\lam|)^\ell } {\ell!}   (\partial_{x_j}^\ell f)(|\lam|R(n),\lam\bigr)\\
 &&\qquad\qquad\qquad\qquad
 {}+\frac {n_j} {2|\lam|}  \sum_{\ell=0} ^{2M} \frac  { (-2|\lam|)^\ell } {\ell!}   (\partial_{x_j}^\ell f)(|\lam|R(n),\lam\bigr)\\
 &\equivH {2M-1} & \frac {2n_j+1}  {2|\lam|}    \sum_{\ell=0} ^{M} \frac {(2\lam)^{2\ell}} {(2\ell)!}    (\partial_{x_j}^{2\ell} f)(|\lam|R(n),\lam\bigr)\\
  &&\qquad\qquad\qquad\qquad
 {}+\frac {1} {2|\lam|} 
  \sum_{\ell=0} ^{M-1} \frac {(2|\lam|)^{2\ell+1} } {(2\ell+1)!}   (\partial_{x_j}^{2\ell+1} f)(|\lam|R(n),\lam\bigr).
\eeno
Let us define 
\beq
\label  {radialSdescribtiondemoeq1} 
\begin{aligned}
f_{2\ell} (x,\lam) &\eqdefa     \sum_{j=1}^d \frac {2^{2\ell-1}} {(2\ell)!} x_j\lam^{2\ell-2} \partial_{x_j}^{2\ell} f(x, \lam)\andf\\
f_{2\ell+1} (x,\lam) &\eqdefa  \sum_{j=1}^d \frac {2^{2\ell}} {(2\ell+1)!}  \lam^{2\ell} \partial^{2\ell+1}_{x_j} f(x, \lam).
\end{aligned}
\eeq
Clearly, all functions $f_\ell$ are supported in $[0,\infty[^d\times\{0\}\times\R$ and belong
to $\cS^d_+,$ and the above equality rewrites
\beq
\label  {radialSdescribtiondemoeq2}
\wh \Delta \Theta_f (n,\lam) \equivH {2M-1} - \sum_{\ell=1}^{2M} f_\ell \bigl(|\lam|R(n),\lam\bigr).
\eeq
Arguing by induction, it is  easy to establish that for any function~$f$ in~$\cS_d^+$ supported in~$[0,\infty[\times\{0\}\times \R$ and any integers $N$ and $p,$ 
the quantity $\|\wh\D^p \Theta_f\|_{N,0,\cS(\wh\H^d)}$ is finite.
Indeed, this is obvious for $p=0.$ Now, if the property holds true for some non negative integer~$p$ then, thanks to~\eqref{radialSdescribtiondemoeq2} and~\eqref{equivonHtildeeq3},
$$
\wh \Delta^{p+1} \Theta_f (n,\lam) \equivH {2M-1-2p}  -\sum_{\ell=1}^{2M} \wh \D^p \Theta_{f_\ell}(n,\lam).
$$
{}From\refeq{equivonHtildeeq0}, \refeq  {radialSdescribtiondemoeq2} and the induction hypothesis, it is clear that if we choose~$M$ greater than~$p$  then we get that 
$\|\wh\D^p \Theta_f\|_{N,0,\cS(\wh\H^d)}$ is finite for all integer $N.$
\medbreak
Let us next study the action of Operator~$\wh\cD_\lam$. From its definition in Lemma \ref {decaygivesregul1}, we gather~that
 $$\begin{aligned}
(\wh\cD_\lam \Theta_f)(\wh w)   &=  \frac {d} {d\lam}\bigl(  f \bigl(|\lam|R(n),\lam\bigr)\bigr) + \frac d {2\lam}  f \bigl(|\lam|R(n),\lam\bigr)  
+\frac 1{2 \lam}\sum_{j=1}^d \cD_j(n,\lam)\with \\
\cD_j(n,\lam)& \eqdefa 
n_j   f \bigl(|\lam| (R(n)-2\d_j),\lam\bigr) 
-(n_j+1) f \bigl(|\lam| (R(n)+2\d_j),\lam\bigr). 
\end{aligned}
$$
Lemma\refer  {TaylorHtilde}, and Assertions\refeq  {equivonHtildeeq1} and\refeq  {equivonHtildeeq2}    imply that 
 \ben
\nonumber\frac 1 {2\lam}  \cD _j(n,\lam) &  \equivH {2M-1} & -\frac {n_j+1} {2\lam} \sum_{\ell=0} ^{2M} \frac { (2|\lam|)^\ell } {\ell!} (\partial_{x_j}^\ell f)(|\lam|R(n),\lam\bigr)\\
\nonumber  &&\ \quad\qquad\qquad  {}+\frac {n_j} {2\lam} \sum_{\ell=0} ^{2M} \frac {(-2|\lam|)^\ell} {\ell!}    (\partial^\ell_{x_j} f )(|\lam|R(n),\lam\bigr)\\
\label  {radialSdescribtiondemoeq3}
  &\equivH {2M-1} & -  \sum_{\ell=0}^M \frac { (2\lam)^{2\ell-1 }} {(2\ell)!}  (\partial_{x_j}^{2\ell} f)(|\lam|R(n),\lam\bigr)\\
 \nonumber
  &&\ \quad\qquad\qquad  {}-\frac {2n_j+1} {2\lam} \sum_{\ell=0} ^{M-1} \frac {(2|\lam|)^{2\ell+1}} {(2\ell+1)!}    (\partial^{2\ell+1}_{x_j} f )(|\lam|R(n),\lam\bigr).
  \een
{}Applying the chain rule yields 
\beq
\label  {radialtypeSdescribtiondemoeq1}
\frac {d} {d\lam}\bigl(  f \bigl(|\lam|R(n),\lam\bigr)\bigr) 
=(\partial_\lam f)\bigl(|\lam|R(n),\lam\bigr)
+\sgn\lam \sum_{j=1} ^d (2n_j+1)(\partial_{x_j} f)\bigl(|\lam|R(n),\lam\bigr).
\eeq
Defining for $\ell\geq1$ the functions
\beno
\wt f_{2\ell}(x,\lam ) &
\eqdefa & \sum_{j=1}^d \frac {2^{2\ell-1}} {(2\ell)!}  \lam ^{2\ell-1} \partial_{x_j}^{2\ell} f(x, \lam )\andf \\
\wt f_{2\ell+1} (x,\lam ) &\eqdefa & \sum_{j=1}^d \frac {2^{2\ell}} {(2\ell+1)!}  x_j \lam ^{2\ell-1} \partial_{x_j}^{2\ell+1} f(x, \lam ),
\eeno
we get, using\refeq {radialSdescribtiondemoeq3} and\refeq  {radialtypeSdescribtiondemoeq1}, 
 $$
(\wh\cD_\lam \Theta_f)(\wh w) \equivH{2M-1} (\partial_\lam f)\bigl(|\lam| R(n),\lam) -\sum_{\ell=2} ^{2M} \wt f_\ell \bigl(|\lam| R(n),\lam\bigr) .
$$
{}From that relation, mimicking the induction proof for $\wh\D,$ we easily conclude that 
for any function $f$ in $\cS_d^+$ supported in~$[0,\infty[^d\times\{0\}\times \R,$ and any integer $p,$ 
the quantity $\|\wh\cD_\lam^p \Theta_f\|_{N,0,\cS(\wh\H^d)}$ is finite for all integer $N.$
This  completes the proof Theorem\refer{radialtypeSdescribtion} in that particular case.
\medbreak
Next, let us investigate the case when the function~$f$ of $\cS_d^+$ is supported in~$[r_0,\infty[^d\times \ZZ^d\times \R$ for some positive~$r_0$ and satisfies \eqref{eq:fsym}. Then, by definition of the operator~$\wh \D$, we have for all $\wh w=(n,m,\lam)$ in~$\wt\H^d,$ denoting $k\eqdefa m-n$,
\beno
-\wh \D \Theta_f (\wh w)  &\eqdefa &   \frac {1} {2 |\lam|} \Bigl( (|n+m| +d)  f\bigl (|\lam|R(n,m),k,\lam\bigr)
-\sum_{j=1}^d  \wt \D_j (\wh w)\Bigr)\with\\\
 \wt \D_j (\wh w) & \eqdefa &
\sqrt{(n_j+1)(m_j+1)} \, f \bigl(|\lam|(R(n,m)+2\d_j),k, \lam\bigr) \\
&&\qquad\qquad\qquad\qquad\qquad\qquad{}
 + \sqrt {n_jm_j} \, f \bigl(|\lam|(R(n,m)-2\d_j),k,\lam\bigr).
\eeno
Compared to\refeq {radialSdescribtiondemoeq0}, the computations get wilder, 
owing to the square roots in the above formula. 
Let $M$ be an integer (to be suitably chosen later on). Lemma\refer  {TaylorHtilde}, and Assertions\refeq  {equivonHtildeeq1} and\refeq  {equivonHtildeeq2}  imply that 
$$\longformule{
\frac 1 {2|\lam|} \wt \D_j (\wh w) \equivH {2M-1}  \frac {\sqrt{(n_j+1)(m_j+1)}} {2|\lam|}   
 \sum_{\ell=0} ^{2M} \frac {(2|\lam|)^\ell } {\ell!}   (\partial_{x_j}^\ell f)\bigl(|\lam|R(n,m),k,\lam\bigr)
 }
 {
 {}+\frac {\sqrt{n_jm_j}} {2|\lam|}  \sum_{\ell=0} ^{2M} \frac  { (-2|\lam|)^\ell } {\ell!}   
 (\partial_{x_j}^\ell f)\bigl(|\lam|R(n,m),k,\lam\bigr).
 }
 $$
 Defining
 \beq
 \label {typeSgenedemoeq1000}
 \al^\pm(p,q) \eqdefa \sqrt{(p+1)(q+1)}\pm\sqrt {pq},
\eeq
for nonnegative integers~$p$ 	and~$q$, we get
 \beq
 \label {typeSgenedemoeq10}
\frac 1 {2|\lam|} \wt \D_j (\wh w) \equivH {2M-1} \wt \D_j^{0} (\wh w) +\wt \D_j^{1} (\wh w)
\eeq
$$
\begin{aligned}
\with\wt \D_j^{0} (\wh w) & \eqdefa  \frac {\al^+(n_j,m_j) }  {2|\lam|}    \sum_{\ell=0} ^{M} \frac {(2\lam)^{2\ell}} {(2\ell)!}    (\partial_{x_j}^{2\ell} f)\bigl(|\lam|R(n,m),k,\lam\bigr)\\
\andf\wt \D_j^{1} (\wh w) & \eqdefa  \frac {\al^-(n_j,m_j) }  {2|\lam|}  
  \sum_{\ell=0} ^{M-1} \frac {(2|\lam|)^{2\ell+1} } {(2\ell+1)!}   (\partial_{x_j}^{2\ell+1} f)\bigl(|\lam|R(n,m),k,\lam\bigr).
 \end{aligned}
 $$
 Now let us compute an expansion of~$\al_j^\pm(n,m)$ with respect to~$n_j+m_j+1$ and~$n_j-m_j$.  
Let~$p$ and~$q$ be two integers and let us write
\beno
(p+1)(q+1) & = & pq+ p+q+1\andf\\
pq & =  & \frac 1 4 \bigl((p+q+1)^2 -2(p+q+1) + 1-(p-q)^2 \bigr)\,.
\eeno
We get that
\beno
\sqrt {(p+1)(q+1)} &= & \frac 12 (p+q+1) \sqrt {1+\frac 2 {p+q+1} +\frac {1-(p-q)^2} {(p+q+1)^2}}\\
\andf\sqrt {pq} &= & \frac 12 (p+q+1) \sqrt {1-\frac 2 {p+q+1} +\frac {1-(p-q)^2} {(p+q+1)^2}}\,\cdotp
\eeno
Let us introduce the notation~$f(p,q)=\cO_M(p,q)$ to mean that  for some constant $C,$ there holds 
$$
|f(p,q)|\leq C\biggl(\frac{1}{(p+q+1)^M}+\frac{|p-q|^{2M+2}}{(p+q+1)^{2M+1}}\biggr) \cdotp
$$
Using the following Taylor expansion with $K=2M$:
$$
\sqrt{1+u}=1+\sum_{\ell_1=1}^{K} a_{\ell_1} u^{\ell_1} + (K+1)a_{K+1}u^{K+1}\int_0^1(1+tu)^{-K-\frac12}(1-t)^{K}\,dt,
$$
 we gather that 
$$
\begin{aligned}
\sqrt {(p+1)(q+1)} &=
 \frac 12 (p+q+1) \biggl(1+\sum_{\ell_1=1} ^{2M}
a_{\ell_1}\Bigl( \frac 2 {p+q+1} +\frac {1-(p-q)^2} {(p+q+1)^2}\Bigr) ^{\ell_1}\biggr)+\cO_{2M}(p,q)\\\andf 
\sqrt {pq} &= 
\frac 12 (p+q+1) \biggl(1+\sum_{\ell_1=1} ^{2M}
a_{\ell_1}\Bigl(- \frac 2 {p+q+1} +\frac {1-(p-q)^2} {(p+q+1)^2}\Bigr) ^{\ell_1}\biggr)+\cO_{2M}(p,q).
\end{aligned}$$
Now we can compute the expansion of~$\al^\pm(p,q)$. Newton's formula gives
\beq
 \label {typeSgenedemoeq12}
 \begin{aligned}
\al^+(p,q)  &=  p+q+1 +\sumetage {1\leq \ell_1\leq 2M} {2\ell_2\leq \ell_1} a_{\ell_1} \begin{pmatrix}  \ell_1 \\ 2\ell_2\end{pmatrix} \frac {4^{\ell_2} \bigl(1-(p-q)^2\bigr)^{\ell_1-2\ell_2} } {( p+q+1)^{2\ell_1-2\ell_2-1}}+\cO_{2M}(p,q) \\
\al^-(p,q) &= 2\sumetage {1\leq \ell_1\leq 2M} {2\ell_2+1\leq \ell_1} a_{\ell_1} \begin{pmatrix}  \ell_1 \\ 2\ell_2+1\end{pmatrix} \frac {4^{\ell_2} \bigl(1-(p-q)^2\bigr)^{\ell_1-2\ell_2-1} } {( p+q+1)^{2\ell_1-2\ell_2-2}}+\cO_{2M}(p,q).
\end{aligned}
\eeq

In the above expansion, some terms  that turn out to be  $\cO_{2M}(p,q)$ are kept 
for notational simplicity.  Now, one may check that  for all functions~$\theta$ and 
$\theta'$ supported in~$[r_0,\infty[^d\times \ZZ^d\times \R$ and any integers $M_1$ and $M_2,$ we have
for all $j\in\{1,\cdots,d\},$
\beq
\label {typeSgenedemoeq13}
\bigl( f=\cO_{M_1} \!\andf \! \theta  \equivH {M_2} \theta'\bigr)  \Longrightarrow f(n_j,m_j) \theta(\wh w) \equivH {M_1+M_2}  f(n_j,m_j) \theta'(\wh w).
\eeq
Then  Assertion\refeq {typeSgenedemoeq12} implies that  for any function~$g $ in~$\cS_d^+$ supported in~$[r_0,\infty[^d\times \ZZ^d\times \R$, and any~$j$ in~$\{1,\cdots,d\}$, we have
\beq
\label{eq:alpha+}
\begin{aligned}
&\al^+(n_j,m_j) \Theta_g(\wh w)  \equivH {2M-1} \biggl( n_j\!+\!m_j\!+\!1
\\
&\qquad\qquad\qquad {}+\sumetage {1\leq \ell_1\leq 2M} {2\ell_2\leq \ell_1} \! a_{\ell_1} \begin{pmatrix}  \ell_1 \\ 2\ell_2\end{pmatrix} \frac {4^{\ell_2} \bigl(1-(n_j-m_j)^2\bigr)^{\ell_1-2\ell_2} } {( n_j+m_j+1)^{2\ell_1-2\ell_2-1}}\biggr) \Theta_g(\wh w)\andf
\end{aligned}
\vspace*{-3mm}
\eeq
\begin{equation}\label{eq:alpha-}
\al^-(n_j,m_j)  \theta_g(\wh w)   \equivH {2M-1} 2\biggl(\sumetage {1\leq \ell_1\leq 2M} {2\ell_2+1\leq \ell_1} \!a_{\ell_1}\! \begin{pmatrix}  \ell_1 \\ 2\ell_2\!+\!1\end{pmatrix} \frac {4^{\ell_2}
 \bigl(1-(n_j-m_j)^2\bigr)^{\ell_1-2\ell_2-1} } {(n_j\!+\!m_j\!+\!1)^{2\ell_1-2\ell_2-2}}\biggr) \Theta_g(\wh w).
\end{equation}
 Using\refeq{typeSgenedemoeq10}, this gives
$$\begin{aligned}
\wt \D_j^{(0)} (\wh w) &\equivH{2M-1}  \biggl(\frac {n_j+m_j+1} {2|\lam|}\biggr) \Theta_f(\wh w) +\sum_{\ell=0}^M \Theta_{f_{j,2\ell}}(\wh w) \with \\
f_{j,0} (x,k,\lam) &\eqdefa  \sumetage {1\leq \ell_1\leq 2M} {2\ell_2\leq \ell_1}\!\! a_{\ell_1}\!\begin{pmatrix}  \ell_1 \\ 2\ell_2\end{pmatrix} 2^{2\ell_2-1} \bigl(1-k_j^2\bigr)^{\ell_1-2\ell_2} \frac { \lam^{2(\ell_1-\ell_2-1)}} {x_j^{2\ell_1-2\ell_2-1}} f(x,k,\lam) 
\end{aligned}
$$
 and, if  $1\leq\ell\leq M,$
 $$
f_{j,2\ell} (x,k,\lam) \eqdefa  \frac1{(2\ell)!}\sumetage {0\leq \ell_1\leq 2M} {2\ell_2\leq \ell_1} a_{\ell_1}\begin{pmatrix}  \ell_1 \\ 2\ell_2\end{pmatrix} 2^{2\ell_2-1+2\ell} \bigl(1-k_j^2\bigr)^{\ell_1-2\ell_2} \frac { \lam^{2(\ell+\ell_1-\ell_2-1)}} {x_j^{2\ell_1-2\ell_2-1}} \partial_{x_j}^{2\ell}f(x,k,\lam). $$
Similarly,
\beno
\wt \D_j^{(1)} (\wh w) &\equivH{2M-1} & \sum_{\ell=0} ^{M-1} \Theta_{f_{j,2\ell+1}} \with \\
f_{j,2\ell+1} (x,k,\lam) &\eqdefa &  \frac1{(2\ell+1)!}\sumetage {1\leq \ell_1\leq 2M} {2\ell_2+1\leq \ell_1} a_{\ell_1}\begin{pmatrix}  \ell_1 \\ 2\ell_2+1\end{pmatrix} 4^{\ell_2+\ell} \bigl(1-k_j^2\bigr)^{\ell_1-2\ell_2-1}\\
&&\qquad\qquad\qquad\qquad\qquad\qquad
{}\times \frac { \lam^{2(\ell+\ell_1-\ell_2-1)}} {x_j^{2\ell_1-2\ell_2-2}} \partial_{x_j}^{2\ell+1}f(x,k,\lam) .
\eeno
{}From the definition of Operator~$\wh \D$, we thus infer that there exist functions $f_\ell$
of $\cS_d^+$ supported in $[r_0,\infty[^d\times\Z^d\times\R$  and satisfying \eqref{eq:fsym}, such that for all $M\geq0,$ we have
$$
\wh\D \Theta_f \equivH {2M-1}  \sum_{\ell=0}^{2M} \Theta _{f_\ell}.
$$
At this stage,  one may prove  by induction,  as in the previous case, that 
$\|\wh\D^p \Theta_f\|_{N,0,\cS(\wh\H^d)}$ is finite for all integers $N$ and $p.$
\medbreak
Let us finally study the action  of~$\wh\cD_\lam$.  
  {}From its definition,  setting $k=m-n,$ we get
 $$
 \displaylines{
(\wh\cD_\lam  \Theta_f)(\wh w)   =  \frac {d} {d\lam}\bigl(  f \bigl(|\lam|R(n,m),k,\lam\bigr)\bigr) + \frac d {2\lam}  f \bigl(|\lam|R(n,m),k,\lam\bigr)  
+\frac 1{2 \lam}\sum_{j=1}^d \cD_j(\wh w)\with
\cr
\cD_j(\wh w)\eqdefa 
\sqrt {n_jm_j}   f \bigl(|\lam| (R(n,m)-2\d_j),k,\lam\bigr) 
-\sqrt {(n_j+1)(m_j+1)}  f \bigl(|\lam| (R(n,m)+2\d_j),k,\lam\bigr). 
}
$$
 The chain rule implies that 
\beq
\label  {typeSgenedemoeq1}
\begin{aligned}
&\frac {d} {d\lam}\bigl(  f \bigl(|\lam|R(n,m),k,\lam\bigr)\bigr) 
=(\partial_\lam f)\bigl(|\lam|R(n,m),k,\lam\bigr)\\
&\qquad\qquad\qquad\qquad\qquad\qquad
{}+\sgn\lam \sum_{j=1} ^d (n_j+m_j+1)(\partial_{x_j} f)\bigl(|\lam|R(n,m),k, \lam\bigr).
\end {aligned}
\eeq
Combining Lemma\refer  {TaylorHtilde}, and Assertions\refeq  {equivonHtildeeq1} and\refeq  {equivonHtildeeq2}   yields 
 $$\displaylines{\quad
-\frac 1 {2\lam}  \cD _j(\wh w)  \equivH {2M-1}  \alpha^-(n_j,m_j)
\sum_{\ell=0}^M \frac { (2\lam)^{2\ell-1 }} {(2\ell)!}  (\partial_{x_j}^{2\ell} f)(|\lam|R(n,m),k,\lam\bigr)\hfill\cr\hfill
  +{\alpha^+(n_j,m_j)}\sgn\lam\sum_{\ell=0} ^{M-1}\frac {(2\lam)^{2\ell}}{(2\ell+1)!}    (\partial^{2\ell+1}_{x_j} f )(|\lam|R(n,m),k,\lam\bigr).\quad}
  $$
  Therefore, we have 
$$
\displaylines{(\wh\cD_\lam  \Theta_f)(\wh w)   \equivH {2M-1}   (\partial_\lam f)\bigl(|\lam|R(n,m),k,\lam\bigr)
+\frac{1}{2\lam} \biggl(d-\sum_{j=1}^d\alpha^-(n_j,m_j)\biggr) f \bigl(|\lam|R(n,m),k,\lam\bigr)\hfill\cr\hfill
+\sgn\lam \sum_{j=1} ^d \bigl(n_j+m_j+1 - \alpha^+(n_j,m_j)\bigr)(\partial_{x_j} f)\bigl(|\lam|R(n,m),k, \lam\bigr)
\hfill\cr\hfill- \alpha^-(n_j,m_j)
\sum_{\ell=1}^M \frac { (2\lam)^{2\ell-1 }} {(2\ell)!}  (\partial_{x_j}^{2\ell} f)(|\lam|R(n,m),k,\lam\bigr)\hfill\cr\hfill
  -{\alpha^+(n_j,m_j)}\sgn\lam\sum_{\ell=1} ^{M-1}\frac {(2\lam)^{2\ell}}{(2\ell+1)!}    (\partial^{2\ell+1}_{x_j} f )(|\lam|R(n,m),k,\lam\bigr).}
$$
Hence, using \eqref{eq:alpha+} and \eqref{eq:alpha-} and noticing that the coefficient $\ds \!a_{\ell_1}$ involved in the expansion of $ \al^\pm (n_j,m_j)$ is equal to $1/2$, we conclude that
there exist some functions $\wt f_j$, $f^\flat _j$ and  $f^\sharp _{j,\ell}$ of~$\cS_d^+,$ supported in $[r_0,\infty[^d\times\Z^d\times\R$ and satisfying \eqref{eq:fsym}
so that 
$$
\displaylines{(\wh\cD_\lam  \Theta_f)(\wh w)   \equivH {2M-1}  (\Theta_{\partial_\lam f} )(\wh w) +\sum_{j=1}^d  (\Theta_{\wt f_j} )(\wh w) - \sum_{j=1}^d (\Theta_{f^\flat _j} )(\wh w) - \sum_{\ell=1} ^{2M} \sum_{j=1}^d (\Theta_{f^\sharp _{j,\ell}} )(\wh w) 
\,,}
$$
where 
$$ \wt f_j (x,k,\lam) \eqdefa \sumetage {2\leq \ell_1\leq 2M} {2\ell_2+1\leq \ell_1} \!a_{\ell_1}\! \begin{pmatrix}  \ell_1 \\ 2\ell_2+1\end{pmatrix} \frac {4^{\ell_2}
 \bigl(1-k_j^2\bigr)^{\ell_1-2\ell_2-1}  \lam^{2\ell_1-2\ell_2-3 }} {x_j^{2\ell_1-2\ell_2-2}} f (x,k,\lam)\, ,$$
$$ f^\flat _j(x,k,\lam) \eqdefa  \sumetage {1\leq \ell_1\leq 2M} {2\ell_2\leq \ell_1} \! a_{\ell_1} \begin{pmatrix}  \ell_1 \\ 2\ell_2\end{pmatrix} \frac {4^{\ell_2} \bigl(1-k_j^2\bigr)^{\ell_1-2\ell_2} \lam^{2\ell_1-2\ell_2-1}} {x_j ^{2\ell_1-2\ell_2-1}}\, \partial_{x_j}f (x,k,\lam)\, ,$$
$$ f^\sharp _{j,2\ell}(x,k,\lam) \eqdefa  \biggl(\sumetage {1\leq \ell_1\leq 2M} {2\ell_2+1\leq \ell_1} \!a_{\ell_1}\! \begin{pmatrix}  \ell_1 \\ 2\ell_2\!+\!1\end{pmatrix} \frac {2^{2\ell_2+2\ell}
 \bigl(1-k_j^2\bigr)^{\ell_1-2\ell_2-1} } {x_j^{2\ell_1-2\ell_2-2}}\biggr)\frac { \lam^{2\ell + 2\ell_1-2\ell_2-3 }} {(2\ell)!}  (\partial_{x_j}^{2\ell} f)(x,k,\lam)\, $$
and 
$$
\longformule{
 f^\sharp _{j,2\ell+1}(x,k,\lam) \eqdefa  2 \biggl( x_j
\\+\sumetage {1\leq \ell_1\leq 2M} {2\ell_2\leq \ell_1} \! a_{\ell_1} \begin{pmatrix}  \ell_1 \\ 2\ell_2\end{pmatrix} \frac {4^{\ell_2} \bigl(1-k_j^2\bigr)^{\ell_1-2\ell_2} \lam^{2\ell_1-2\ell_2 }} {x_j^{2\ell_1-2\ell_2-1}}\biggr)
}
{�{}\times \frac {(2\lam)^{2\ell -1}}{(2\ell+1)!}    (\partial^{2\ell+1}_{x_j} f )(x,k,\lam)\, .
}
$$
At this stage, one  can complete  the proof as in the previous cases.\qed


\medbreak

It will be useful to give the following asymptotic description of the operators $\wh \D$ and $\wh \cD_\lam$ when $\lam$ tends to 0:

\begin{proposition}
\label {whDeltaover0}{\sl
For any  function $f$ in   $\cS_1^+$ supported in~$[r_0,\infty[\times \ZZ\times\R$  for some positive~$r_0,$
the extension to $\wh \D \Theta_f$ and $\wh \cD_\lam \Theta_f$ to $\wh\H^d_0$ is given by
$$\begin{aligned}
(\wh \D \Theta_f )(\dot x, k)&= \dot x\partial^2_{\dot x\dot x}f(\dot x,k,0)+\partial_{\dot x}f(\dot x,k,0)
-\frac{k^2}{4\dot x}f(\dot x,k,0)\andf\\
(\wh \cD_\lam \Theta_f )(\dot x, k)&= \partial_\lam f(\dot x,k,0).\end{aligned}
$$}
\end{proposition}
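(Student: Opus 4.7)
My plan is to recycle the explicit Taylor-type expansions of $\wh\D\Theta_f$ and $\wh\cD_\lam\Theta_f$ already obtained in the proof of Theorem~\ref{radialtypeSdescribtion}, then let $\wh w=(n,m,\lam)\in\wt\H^1$ tend to $(\dot x,k)\in\wh\H^d_0,$ i.e.\ $|\lam|(n+m+1)\to\dot x$ and $m-n\to k$ with $\lam\to0,$ keeping track of which terms survive in the limit and which cancel or vanish.

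\smallbreak
For $\wh\D\Theta_f$ I start from
$$
-\wh\D\Theta_f(\wh w)=\frac{n+m+1}{2|\lam|}\,f\bigl(|\lam|R(n,m),k,\lam\bigr)-\wt\D_1^0(\wh w)-\wt\D_1^1(\wh w)
$$
with $\wt\D_1^0,\wt\D_1^1$ as in \eqref{typeSgenedemoeq10}. The expansion \eqref{typeSgenedemoeq12} gives
$$
\alpha^+(n,m)=n+m+1-\frac{(n-m)^2}{2(n+m+1)}+\cO_2,\qquad \alpha^-(n,m)=1+\cO_2,
$$
so the leading singular term in $\wt\D_1^0$ exactly cancels the first summand above, and only the $\ell=0,1$ entries in the Taylor expansions of $\wt\D_1^0$ and $\wt\D_1^1$ give a nonzero contribution as $\lam\to0$: the $\ell_1=1$ correction to $\alpha^+$ yields $\frac{(n-m)^2}{4|\lam|(n+m+1)}f\to\frac{k^2}{4\dot x}f(\dot x,k,0)$; the $\ell=1$ term $\frac{\alpha^+(n,m)}{2|\lam|}\cdot 2\lam^2\partial_x^2 f\sim |\lam|(n+m+1)\partial_x^2 f$ tends to $\dot x\,\partial^2_{\dot x\dot x}f(\dot x,k,0)$; and $\wt\D_1^1\sim\alpha^-(n,m)\partial_x f\to\partial_{\dot x}f(\dot x,k,0).$ All remaining $\ell\geq2$ contributions carry an extra factor of $\lam^{2\ell-2}$ and vanish, while the error terms $\cO_2$ give remainders of order $(|\lam|(n+m+1)^2)^{-1}=((n+m+1)\dot x)^{-1}$ which also tend to $0.$

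\smallbreak
For $\wh\cD_\lam\Theta_f$ I use
$$
\wh\cD_\lam\Theta_f(\wh w)=\partial_\lam f+\sgn\lam\,(n+m+1)\partial_x f+\frac{1}{2\lam}f+\frac{\cD_1(\wh w)}{2\lam},
$$
where the chain rule formula \eqref{typeSgenedemoeq1} has been used and $f$ and its derivatives are evaluated at $(|\lam|R(n,m),k,\lam).$ Taylor-expanding $\cD_1$ as in the proof of Theorem~\ref{radialtypeSdescribtion} gives
$$
\cD_1(\wh w)=-\alpha^-(n,m)f-2|\lam|\alpha^+(n,m)\partial_x f+O(\lam^2),
$$
so the two potentially singular terms combine into $\frac{1-\alpha^-(n,m)}{2\lam}f=O(\lam^{-1}(n+m+1)^{-2})\to0,$ and the chain rule term combines with $-\sgn\lam\,\alpha^+(n,m)\partial_x f$ into $\sgn\lam\,\frac{(n-m)^2}{2(n+m+1)}\partial_x f,$ which vanishes because this equals $\sgn\lam\cdot\frac{k^2|\lam|}{2|\lam|(n+m+1)}\partial_x f\to0.$ All other terms are $O(\lam),$ leaving only $\partial_\lam f(\dot x,k,0).$

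\smallbreak
The main obstacle I anticipate is purely bookkeeping: there are several formally divergent $1/|\lam|$ or $1/\lam$ contributions, and the result depends on tracking how they cancel pairwise once the precise asymptotic coefficients of $\alpha^\pm$ from \eqref{typeSgenedemoeq12} are inserted. Once that cancellation is made explicit, Lemma~\ref{TaylorHtilde} controls the remainders uniformly thanks to $f\in\cS_1^+$ supported in $[r_0,\infty[\times\ZZ\times\R,$ and passage to the limit in each surviving term is immediate.
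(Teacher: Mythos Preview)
Your plan is correct and reaches the same limits as the paper, but the organization differs from the paper's proof. The paper does not recycle the $\alpha^\pm$ expansions from~\eqref{typeSgenedemoeq12}; instead it substitutes $\dot x=2\lam n$ (for $\lam>0$), rewrites $-2\lam^2\wh\D\Theta_f(\wh w)$ as a combination of three explicit terms $\wt\D^1,\wt\D^2,\wt\D^3$ involving $\sqrt{(\dot x/2+\lam)(\dot x/2+(k+1)\lam)}$ and the like, and then performs a single second-order Taylor expansion in $\lam$ with $\dot x$ held fixed, using $\sqrt{y+\eta}=\sqrt y+\eta/(2\sqrt y)-\eta^2/(8y\sqrt y)+\cdots$. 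The zeroth and first order terms in $\lam$ cancel identically, and the $\lam^2$ coefficient gives the answer. The same device is applied to $\wh\cD_\lam\Theta_f$. Your route---separating the square-root asymptotics of $\alpha^\pm$ (in inverse powers of $n+m+1$) from the Taylor expansion of $f$ (Lemma~\ref{TaylorHtilde})---is more modular and reuses existing machinery, whereas the paper's direct expansion is self-contained and makes the pairwise cancellations visible at once.

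One small caution: your displayed asymptotic $\alpha^+(n,m)=n+m+1-\frac{(n-m)^2}{2(n+m+1)}+\cO_2$ is correct, but it is \emph{not} obtained from the $\ell_1=1$ term of~\eqref{typeSgenedemoeq12} alone (that term gives $\frac{1-(n-m)^2}{2(n+m+1)}$, and the extra $\frac{1}{2(n+m+1)}$ does not vanish in your limit since $|\lam|(n+m+1)\to\dot x$). The simplified form requires either combining the $\ell_1=2,\ell_2=1$ contribution as well, or a direct computation of $\sqrt{(p+1)(q+1)}+\sqrt{pq}$ for large $p+q$. Make sure that step is written out explicitly.
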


\begin{proof}
For expository purpose, we omit the dependency on $k,$ for $f.$ Then we have by definition of  $\Theta_f$ and $\wh\D,$
for all $(n,n+k,\lam)$ in~$\wt\H^d$ with positive~$\lam$,
$$\displaylines{\quad
-2\lam^2\wh\D\Theta_f(n,n+k,\lam)=\lam(2n+k+1)f(\lam(2n+k+1),\lam)\hfill\cr\hfill
-\lam\sqrt{(n+1)(n+k+1)}f(\lam(2n+k+3),\lam)-\lam\sqrt{n(n+k)}
f(\lam(2n+k-1),\lam).\quad}$$
Denoting~$\dot x= 2\lam n,$ the above equality rewrites
\beq
\label  {whDeltaover0demoeq1}
\begin {aligned}
-2\lam^2\wh \D \Theta_f (\wh w)  & =    \wt \D^1(\wh w)-\wt \D^2(\wh w)-\wt \D^3(\wh w) \with \\
\wt \D^1 (\wh w)& \eqdefa  \bigl(\dot x+\lam(k+1)\bigr)  f\bigl (\dot x+\lam (k+1)\bigr)\,,
\\
\wt \D^2 (\wh w)& \eqdefa
\sqrt{\Bigl( \frac {\dot x} 2 + \lam\Bigr)\Bigl(\frac {\dot x} 2 + \lam(k+1)\Bigr)} 
\:f\bigl (\dot x+\lam (k+3)\bigr),\\
\wt \D^3 (\wh w)& \eqdefa
\sqrt{\frac {\dot x} 2\Bigl( \frac {\dot x} 2 + \lam k\Bigr)} \:f\bigl (\dot x+\lam (k-1)\bigr).
\end {aligned}
\eeq
In what follows, we shall use repeatedly the following asymptotic expansion for $y>0$ and $\eta$ in $]-y,y[$:
\begin{equation}\label{eq:y}
\sqrt{y+\eta}=\sqrt y+\frac\eta{2\sqrt y}-\frac{\eta^2}{8y\sqrt y}+\sqrt y\,\cO\biggl(\!\Bigl(\frac\eta y\Bigr)^{\!\!3}\!\biggr)\cdotp
\end{equation}
Let us compute the second order expansions of $\wt \D^1(\wh w),$ $\wt \D^2(\wh w)$ and $\wt \D^3(\wh w)$
 with respect to~$\lam,$ for fixed (and positive) value of $\lam n.$  We have
\begin{multline}
\label  {whDeltaover0demoeq2}
\wt \D^1 (\wh w)=  \dot x f(\dot x,0) + \Bigl((k+1)\bigl(f(\dot x,0)+\dot x\partial_{\dot x}f(\dot x,0)\bigr) +\dot x\partial_{\lam}f(\dot x,0)\Bigr)
\lam \\
{}+\biggl(\frac12\dot x\partial^2_{\lam\lam}f(\dot x,0) +\frac{(k+1)^2}2\Bigl(\dot x\partial^2_{\dot x\dot x}f(\dot x,0)+2\partial_{\dot x}f(\dot x,0)\Bigr) \\
{}+ (k+1) \Bigl(\partial_{\lam}f(\dot x,0)+ \dot x\partial^2_{\dot x\lam}f(\dot x,0)\Bigr)\biggr)\lambda^2
+\cO(\lambda^3)\cdotp
\end{multline}
In order to find out  the second order expansions of $\wt\D^2(\wh w)$ and $\wt\D^3(\wh w),$
we shall use the fact that, denoting $\dot y=\dot x/2$ and using \eqref{eq:y}, 
$$\displaylines{
\wt\D^2(\wh w)=\biggl(\sqrt{\dot y}+\frac{\lambda}{2\sqrt{\dot y}}-\frac{\lambda^2}{8\dot y\sqrt{\dot y}}\biggr)
\biggl(\sqrt{\dot y}+\frac{(k+1)\lambda}{2\sqrt{\dot y}}-\frac{(k+1)^2\lambda^2}{8\dot y\sqrt{\dot y}}\biggr)
\hfill\cr\hfill\times\biggl(f(\dot x,0)+\bigl(\partial_\lam f(\dot x,0)+(k+3)\partial_{\dot x}f(\dot x,0)\bigr)\lam
\hfill\cr\hfill+\Bigl(\frac12\partial_{\lam\lam}^2f(\dot x,0)+(k+3)\partial^2_{\dot x\lam}f(\dot x,0)+\frac{(k\!+\!3)^2}2\partial^2_{\dot x\dot x}f(\dot x,0)\!\Bigr)\lam^2\biggr)+\cO(\lam^3)\cdotp}
$$
Hence, we get at the end, replacing $\dot y$ by its value, 
\begin{multline}\label{eq:Dp2}
\wt\D^2(\wh w)=\frac{\dot x}2f(\dot x,0)+
\biggl(\Bigr(1+\frac k2\Bigr)f(\dot x,0)+\Bigl(\frac{k+3}2\Bigr)\dot x\partial_{\dot x}f(\dot x,0)
+\frac12{\dot x}\partial_\lam f(\dot x,0)\biggr)\lambda
\\+\biggl(\frac{(k\!+\!3)^2}4\dot x\partial^2_{\dot x\dot x}f(\dot x,0)+\biggl(\frac{k+3}2\biggr)
\dot x\partial^2_{\dot x\lam}f(\dot x,0)
+\frac{\dot x}4\partial^2_{\lam\lam}f(\dot x,0)\\
+\Bigl(1+\frac k2\Bigr)\bigl((k+3)\partial_{\dot x}f(\dot x,0)+\partial_\lam f(\dot x,0)\bigr)-\frac{k^2}{4\dot x} f(\dot x,0)\biggr)\lam^2+\cO(\lam^3).\end{multline}
Similarly, we have
$$\displaylines{
\wt\D^3 (\wh w)=\sqrt{\dot y}\biggl(\sqrt{\dot y}+\frac{k\lambda}{2\sqrt{\dot y}}-\frac{k^2\lambda^2}{8\dot y\sqrt{\dot y}}\biggr)
\biggl(f(\dot x,0)+\bigl(\partial_\lam f(\dot x,0)+(k-1)\partial_{\dot x}f(\dot x,0)\bigr)\lam
\hfill\cr\hfill+\Bigl(\frac12\partial_{\lam\lam}^2f(\dot x,0)+(k-1)\partial^2_{\dot x\lam}f(\dot x,0)+\frac{(k-1)^2}2\partial^2_{\dot x\dot x}f(\dot x,0)\Bigr)\lam^2\biggr)+\cO(\lam^3),}
$$
whence, 
\begin{multline}\label{eq:Dp3}
\wt\D^3(\wh w)=\frac{\dot x}2f(\dot x,0)+
\biggl(\frac k2 f(\dot x,0)+\Bigl(\frac{k-1}2\Bigr)\dot x\partial_{\dot x}f(\dot x,0)
+\frac12{\dot x}\partial_\lam f(\dot x,0)\biggr)\lambda
\\+\biggl(\frac{(k-1)^2}4\dot x\partial^2_{\dot x\dot x}f+\biggl(\frac{k-1}2\biggr)\dot x\partial^2_{\dot x\lam}f(\dot x,0)
+\frac{\dot x}4\partial^2_{\lam\lam}f(\dot x,0)
\\+\frac k2\bigl((k-1)\partial_{\dot x}f(\dot x,0)+\partial_\lam f(\dot x,0)\bigr)-\frac{k^2}{4\dot x} f(\dot x,0)\biggr)\lam^2+\cO(\lam^3).
\end{multline}
Inserting the above relations in\refeq{whDeltaover0demoeq1}, we discover that 
the zeroth and first order terms in the expansion cancel, and that 
$$
-2\lam^2\wh \D \Theta_f (\wh w) = \biggl(\frac{k^2}{2\dot x}f(\dot x,0)-2\partial_{\dot x}f(\dot x,0)-2\dot x\partial^2_{\dot x\dot x}f(\dot x,0)\biggr)\lambda^2+\cO(\lambda^3),
$$
which ensures that 
$$
\lim_{\lam\to0}\wh\D\Theta_f(\wh w)= \dot x\partial^2_{\dot x\dot x}f(\dot x,0)+\partial_{\dot x}f(\dot x,0)-\frac{k^2}{4\dot x}f(\dot x,0).
 $$
 The proof for Operator $\wh\cD_\lam$ is quite similar: from the definition of $\wh\cD_\lam$ and the chain rule, we discover that
 for all $(n,n+k,\lam)$ in $\wt\H^d$ with $\lam>0,$
$$\displaylines{\quad
\wh\cD_\lam\Theta_f(\wh w)=(2n+k+1)\partial_{\dot x}f(\lam(2n+k+1),\lam) +\partial_\lam f(\lam(2n+k+1),\lam)
\hfill\cr\hfill+\frac1{2\lam}\bigl(f(\lam(2n+k+1),\lam)+\sqrt{n(n\!+\!k)}f(\lam(2n+k-1),\lam)
-\sqrt{(n+1)(n\!+\!k\!+\!1)}f(\lam(2n+k+3),\lam)\bigr).}
$$
Therefore,  assuming that $\dot x\eqdefa2\lam n>0,$  we get
\begin{multline}
\wh\cD_\lam\Theta_f(\wh w)= \partial_\lam f(\dot x+(k+1)\lam,\lam)+
\frac1{\lam}(\dot x+(k+1)\lam)\partial_{\dot x}f(\dot x+(k+1)\lam,\lam) \\
+\frac1{2\lam^2}\Bigl(\lam f(\dot x+(k+1)\lam,\lam) +\wt\D^3(\wh w)-\wt\D^2(\wh w)\Bigr)\cdotp
\end{multline}
Because
$$
 f(\dot x+(k+1)\lam,\lam) = f(\dot x,0)+\bigl((k+1)\partial_{\dot x}f(\dot x,0)+\partial_\lam f(\dot x,0)\bigr)\lam+\cO(\lam^2)
 $$
and 
$$\displaylines{\quad
(\dot x+(k+1)\lam)\partial_{\dot x}f(\dot x+(k+1)\lam,\lam) 
=\dot x\partial_{\dot x}f(\dot x,0)\hfill\cr\hfill
+\bigl((k+1)(\partial_{\dot x}f(\dot x,0)+\dot x\partial^2_{\dot x\dot x} f(\dot x,0))+\dot x\partial^2_{\dot x\lam} f(\dot x,0)\bigr)\lam+\cO(\lambda^2),}
$$
we get at the end, taking advantage of \eqref{eq:Dp2} and \eqref{eq:Dp3},
 $$\wh\cD_\lam\Theta_f(\wh w)= \partial_\lam f(\dot x,0)+\cO(\lambda),$$
which completes the proof.
\end{proof}

\medbreak

\section {Examples of tempered distributions} 
\label {examplescS'}

A  first class of examples will be given by  the functions 
belonging to the  space~$L^1_M(\wh\H^d)$ of Definition\refer {definL1Moderated}. 
This is exactly what states Theorem\refer{indentifonctdistritemp}
that we are going to prove now.
Inequality \eqref{eq:boundf} just follows from the definition of the semi-norms on $\wh \H^d.$
So let us focus on the proof of the first part of the statement. 
Let~$f$ be a function of~$ L^1_M(\wh \H^d) $  such that~$\iota(f)=0$. We claim that $f=0$ a.e. 
Clearly, it is enough to prove  for all $K>0$ and $b>a>0,$ we have 
\[\int_{\wh\cC_{a,b,K}}|f(\wh w)|\,d\wh w=0\]
where  $\wh\cC_{a,b,K}\eqdefa\{(n,m,\lambda)\in\wh\H^d\,:\,  
|\lambda|(|n+m|+d)\leq K,\; |n-m|\leq K \ \hbox{and}\ a\leq|\lambda|\leq b\bigr\}\cdotp$
\medbreak
To this end, we introduce the bounded function~: 
$$g\eqdefa \frac {\overline f}{|f|}\,{\bf 1}_{f\not=0}\,{\bf 1}_{\wh\cC_{a,b,K}}$$
and smooth it out with respect to $\lambda$  by setting 
$$
g_{\e} \eqdefa \chi_{\e}\star_\lambda g
$$
where $\chi_\e\eqdefa\ep^{-1}\chi(\ep^{-1}\cdot)$ and
$\chi$ stands for some smooth even function on $\R,$ supported 
in the interval $[-1,1]$  and with integral $1.$ 
\medbreak
Note that by definition, $g$ is supported in the set $\wh\cC_{a,b,K}.$ 
Therefore,  if $\ep<a$ then $g_\e$ is supported in $\wh\cC_{a-\ep,b+\ep,K}.$ 
This readily ensures that $\|g_\e\|_{N,0,\cS(\wh\H^d)}$ is finite for all integer $N$
(as regards  the action of operator $\wh\Sigma_0,$ note that $g_\ep(n,m,\lambda)=0$ 
whenever $|\lambda|<a-\ep$). 

In order to prove that $g_\ep$ belongs to $\cS(\wh\H^d),$ it suffices to use the following 
lemma the proof of which is left to the reader:
\begin{lemma} {\sl Let $h$ be a smooth function on $\wh\H^d$ with
support  in $\{(n,m,\lambda)\,:\, |\lambda|\geq a\}$ for some $a>0.$
If $h$ and all derivatives with respect to $\lambda$  have fast decay, 
that is have finite semi-norm $\|\cdot\|_{N,0,\cS(\wh\H^d)}$ for all integer $N,$ 
then the same properties hold true for $\wh\cD_\lambda h$ and~$\wh\Delta h.$}
\end{lemma}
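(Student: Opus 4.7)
The plan rests on the observation that the support condition $|\lam|\geq a$ neutralises the seemingly singular factors $1/|\lam|$ and $1/\lam$ appearing in the definitions of $\wh\D$, $\wh\cD_\lam$ and $\wh\Sigma_0$: since $\wh\D$ and $\wh\cD_\lam$ preserve the $\lam$-support, on the support of the outputs these factors are bounded by $1/a$. The task therefore reduces to transferring the assumed fast decay of $h$ in $\wh d_0$ to decay of $\wh\D h$ and $\wh\cD_\lam h$, while controlling the polynomial prefactors and the lattice shifts $\wh w\mapsto\wh w_j^\pm$.

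My first step would be to establish a comparison $\wh d_0(\wh w_j^\pm)\gtrsim\wh d_0(\wh w)$ valid on the support. For the $+$ branch one has $\wh d_0(\wh w_j^+)=\wh d_0(\wh w)+2|\lam|$, so this is immediate. For the $-$ branch the identity $\wh d_0(\wh w_j^-)=\wh d_0(\wh w)-2|\lam|$ is only meaningful when $n_j,m_j\geq 1$; in that regime $|n+m|_1\geq 2$ forces $\wh d_0(\wh w)\geq (d+2)|\lam|$ and hence $\wh d_0(\wh w_j^-)\geq \tfrac{d}{d+2}\wh d_0(\wh w)$. Combined with the crude bound $|n+m|_1\leq\wh d_0(\wh w)/a$ on the support and the elementary estimate $\sqrt{(n_j+1)(m_j+1)}\leq |n+m|_1+2$, the explicit formula for $\wh\D$ yields
$$
|\wh\D h(\wh w)|\leq \frac{C(1+|n+m|_1)}{|\lam|}\Bigl(|h(\wh w)|+\sum_{j}\bigl(|h(\wh w_j^+)|+|h(\wh w_j^-)|\bigr)\Bigr)\leq C_N(1+\wh d_0(\wh w))^{-N+1},
$$
which gives arbitrary polynomial decay since $N$ is free. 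The pointwise estimate for $\wh\cD_\lam h$ is identical, the extra summand $\pa_\lam h$ being directly controlled by hypothesis.

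It remains to recover the full semi-norm $\|\cdot\|_{N,0,\cS(\wh\H^d)}$, which also requires control of $\wh\Sigma_0(\wh\D h)$, $\wh\Sigma_0(\wh\cD_\lam h)$, and of arbitrary $\lam$-derivatives. The action of $\wh\Sigma_0$ merely divides by $\lam$ and symmetrises in $(n,m,\lam)\leftrightarrow(m,n,-\lam)$; since the outputs are still supported in $\{|\lam|\geq a\}$ and $\wh d_0$ is invariant under that symmetry, fast decay is preserved at the cost of a harmless factor $1/a$. To pass to $\lam$-derivatives, I would observe that $[\pa_\lam,\wh\D]$ and $[\pa_\lam,\wh\cD_\lam]$ are operators of exactly the same structural form as $\wh\D$ and $\wh\cD_\lam$, with coefficients being $\pa_\lam$ of rational functions of $\lam$ that remain bounded on $|\lam|\geq a$; an induction on the order of differentiation then expresses $\pa_\lam^k\wh\D h$ and $\pa_\lam^k\wh\cD_\lam h$ as finite sums of operators of the same type acting on $\pa_\lam^j h$ with $j\leq k$, so the preceding decay argument applies verbatim. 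The only genuinely delicate point — and the one I would spend most care on — is the lower bound $\wh d_0(\wh w_j^-)\gtrsim\wh d_0(\wh w)$: the naive inequality $\wh d_0(\wh w_j^-)\geq \wh d_0(\wh w)-2|\lam|$ is not strong enough by itself, and one really uses that the offending term comes multiplied by $\sqrt{n_jm_j}$, which vanishes precisely where the bound $|n+m|_1\geq 2$ would otherwise fail.
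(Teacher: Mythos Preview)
The paper does not give a proof of this lemma: it is stated with the remark ``the proof of which is left to the reader.'' Your argument is correct and supplies precisely the routine verification the authors had in mind. The key points---that the support condition $|\lam|\geq a$ renders the factors $1/|\lam|$ harmless, that the lattice shifts satisfy $1+\wh d_0(\wh w_j^\pm)\gtrsim 1+\wh d_0(\wh w)$ (with the observation that the $\sqrt{n_jm_j}$ coefficient vanishes exactly where the lower bound for $\wh w_j^-$ would fail), that the polynomial prefactors cost only one power of $1+\wh d_0$, and that $\wh\Sigma_0$ and higher $\lam$-derivatives are handled by the same mechanism---are all correctly identified and executed.
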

Because $g_\ep$ is in $\cS(\wh\H^d)$ for all $0<a<\ep,$  our assumption 
on $f$ ensures that we have
 \[
 I_{\e}\eqdefa \int_{\wh\H^d} f(\wh w)\,g_{\e}(\wh w)\,d\wh w=0.
 \]
 Now, we notice that whenever $0<\e\leq a/2,$ we have for all $(n,m,\lambda)\in\wt\H^d$ and $\lambda'\in\R,$
 \[
 \frac1\e\chi\biggl(\frac {\lambda-\lambda'}\e\biggr) g(n,m,\lambda')\,f(n,m,\lambda)
 = \frac1\e\chi\biggl(\frac {\lambda-\lambda'}\e\biggr)    g(n,m,\lambda') 
 ({\bf 1}_{\wh\cC_{a/2,b+a/2,K}}f)(n,m,\lambda),\] 
 which guarantees that
 \[
 \int_{\wh\H^{d}\times \R}
 \chi_\ep(\lambda-\lambda') |g(n,m,\lambda')|\,|f(n,m,\lambda)|\,d\wh w\,d\lambda'
 \leq \|\chi\|_{L^1} \| {\bf 1}_{\wh\cC_{a/2,b+a/2,K}}f\|_{L^1}<\infty.
 \]
 Therefore applying  Fubini theorem, remembering that~$\chi$ is an even function and
 exchanging the notation $\lambda$ and $\lambda'$ in the second line below,
 \begin{eqnarray*}
 I_{\e} & =  & \int_{\wh\H^d} f(n,m,\lambda)\biggl(\int_\R\chi_{\e}(\lambda-\lambda')
 g(n,m,\lambda')\,d\lambda'\biggr)\,d\wh w\\
  & =  & \int_{\wh\H^d} g(n,m,\lambda)
  \biggl(\int_\R \chi_\e(\lambda-\lambda') (1_{\wh\cC_{a/2,b+a/2,K}}f)(n,m,\lambda')\,d\lambda'\biggr)
  d\wh w\\ & = & \int_{\wh\H^d} \bigl(\chi_{\e}\star ({\bf 1}_{\wh\cC_{a/2,b+a/2,K}}f)\bigr)(\wh w)   g(\wh w)\,d\wh w.
 \end{eqnarray*}
 The standard density theorem for convolution in $\R$ ensures that for all $(n,m)$ in~$\N^{2d},$ we have
 $$
 \lim_{\e\rightarrow 0}\int_\R \bigl|\chi_\ep\star  ({\bf 1}_{\wh\cC_{a/2,b+a/2,K}}f)(n,m,\lambda) -({\bf 1}_{\wh\cC_{a/2,b+a/2,K}}f)(n,m,\lambda)\bigr|\,d\lambda=0.
 $$
Hence, because the supremum of $g$ is bounded by $1,$ we get
 \begin{eqnarray*}
 0 & = & \lim_{\e\rightarrow 0} I_{\e}\\
  &= & \int_{\wh\H^d}   {\bf 1}_{\wh\cC_{a/2,b+a/2,K}}f(\wh w)\, g(\wh w)\,d\wh w\\
  & = & \int_{\wh\cC_{a,b,K}}|f(\wh w)|\,d\wh w,
 \end{eqnarray*}
 which completes the proof of Theorem\refer{indentifonctdistritemp}.\qed

\medbreak

Let us  prove  Proposition\refer {dimhomoconcretwhH} which claims that the  functions 
$$
f_\gamma (n,m,\lam) \eqdefa \bigl(|\lam| (2|m|+d)\bigr) ^{-\g} \, \delta_{n,m}
$$ 
are in $L^1_M$ in the case when~$\g$ is less than~$d+1$.   As $f_\gamma$ is   continuous  and bounded away from any neighborhood of $\wh 0,$ it suffices to prove  that 
\beq
\label {dimhomoconcretwhHdemoeq1}
\sum_{n\in \N^d} \int_{|\lam| (2|n|+d)\leq 1}  \bigl(|\lam| (2|n|+d)\bigr) ^{-\g}
|\lam|^d d\lam <\infty.
\eeq
Now, performing the change of variables $ \lam' = \lam (2|n|+d)$, we find out that  
$$
\sum_{n\in \N^d} \int_{|\lam| (2|n|+d)\leq 1}  \bigl(|\lam| (2|n|+d)\bigr) ^{-\g}
|\lam|^d d\lam  = \sum_{n\in \N^d} (2|n|+d)^{-d-1}  \int_{|\lam'|\leq 1}  |\lam'| ^{d-\g}
d\lam'.
$$
Because~$\g<d+1,$ this implies that the last integral is finite. As~$\ds
\sum_{n\in \N^d} (2|n|+d)^{-d-1} $ is finite, one may conclude that
$f_\gamma$ is in $L^1_M.$\qed
\medbreak
In order to give  an example of tempered distribution on the Heisenberg
group that is not a function, let us finally prove Proposition\refer {examplepartiefinie}.
We start with the obvious observation that   
$$
\biggl|
 \int_{\wh \H^d} 
\biggl(\frac {\theta(n,n,\lam)+ \theta(n,n,-\lam) -2\theta(\wh 0) }{ |\lam|^\g (2|n|+d)^\g} \biggr)\d_{n,m} \,d\wh w\biggr|
\leq \cI_1 +\cI_2
$$
with 
$$\begin{aligned}
\cI_1&\eqdefa \int_{\wh \H^d}  {\bf 1}_{\{|\lam|(2|n|+d)\geq 1 \}} \d_{n,m} \frac {\bigl|\theta(n,n,\lam)+ \theta(n,n,-\lam) -2\theta(\wh 0) \bigr|}{ |\lam|^\g (2|n|+d)^\g}\,d\wh w\\  \andf\cI_2&\eqdefa \int_{\wh \H^d}  {\bf 1}_{\{|\lam|(2|n|+d)<1 \}} \d_{n,m} \frac {\bigl|\theta(n,n,\lam)+ \theta(n,n,-\lam) -2\theta(\wh 0) \bigr|}{ |\lam|^\g (2|n|+d)^\g}\,d\wh w.\end{aligned}
$$
On the one hand, we have
$$
\cI_1 \leq  4\|\theta\|_{L^\infty(\wh \H^d)} \sum_{n\in \N^d} \int_{\R} {\bf 1}_{\{|\lam|(2|n|+d)\geq 1 \}}\frac {1}{ |\lam|^\g (2|n|+d)^\g}\, |\lam|^d d\lam.$$
Changing variable~$\lam' = \lam (2|n|+d)$ gives 
$$
\cI_1 \leq 4\|\theta\|_{L^\infty(\wh \H^d)} \sum_{n\in \N^d} \frac {2}{ (2|n|+d)^{d+1} } \int_1^\infty  |\lam'|^{d-\g} \, d\lam'.
$$
As~$\g$ is greater than~$d+1$, the integral in~$\lam'$ is finite and we get
\beq
\label {examplepartiefiniedemoeq1}
\cI_1\leq C\|\theta\|_{L^\infty(\wh \H^d)}.
\eeq
On the other hand,  changing again variable~$\lam'= \lam|(2|n|+d)$, we see that
$$
\cI_2  = \!\!\!\!\sum_{n\in \N^d}  \frac {2}{ (2|n|+d)^{d+1} } \! \int_0^1 \biggl( \theta\Bigl  (n,n,\frac {\lam'} {2|n|+d}\Bigr) +\theta  \Bigl (n,n,\frac {-\lam'} {2|n|+d}\Bigr)  -2\theta(\wh 0)\biggr) |\lam'|^{d-\g}  d\lam'.
$$

At this stage, we need a suitable  bound of the integrand just above.  This will be achieved thanks to the following lemma. 
\begin{lemma}
\label {lemmaproofexamplepartiefinie}
{\sl
There exists an integer $k$ such that for any function~$\theta$ in~$\cS(\wh \H^d)$, we have
$$
\forall (n,n,\lam)\in \wh\H^d\,,\ |\theta (n,n,\lam)-\theta (\wh 0)| \leq C  \|\theta\|_{k,k,\cS(\wh \H^d)}
 \sqrt {|\lam| (2|n|+d)}.
 $$
 }
\end{lemma}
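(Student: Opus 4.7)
First I would invoke Theorem~\ref{MappingofPH} to write $\theta = \cF_\H f$ for some $f$ in $\cS(\H^d)$ whose Schwartz semi-norms are controlled by some semi-norm $\|\theta\|_{k,k,\cS(\wh\H^d)}$. Using the integral representation \eqref{definFourierWigner}, the difference reads
\begin{equation*}
\theta(n,n,\lam) - \theta(\wh 0) = \int_{\H^d} f(Y,s)\,\overline{\bigl(e^{is\lam}\cW(n,n,\lam,Y) - 1\bigr)}\,dY\,ds,
\end{equation*}
where $\theta(\wh 0)$ is obtained by continuity: taking the sequence $(0,0,\mu)$ with $\mu\to 0$, one computes directly from the definition of $\cW$ that $\cW(0,0,\mu,Y) = e^{-|\mu||Y|^2}$, so dominated convergence yields $\theta(\wh 0) = \int_{\H^d} f$. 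The task thus reduces to bounding $|e^{is\lam}\cW(n,n,\lam,Y) - 1|$ pointwise by something of the form $C(|Y|+\sqrt{|s|})\sqrt{|\lam|(2|n|+d)}$.

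The key ingredient is the classical Wigner--Laguerre formula
\begin{equation*}
\cW(n,n,\lam, Y) = e^{-|\lam||Y|^2}\prod_{j=1}^d L_{n_j}\bigl(2|\lam|(y_j^2+\eta_j^2)\bigr),
\end{equation*}
where $L_n$ denotes the Laguerre polynomial of degree $n$. This identity follows directly from the definition of $\cW$ and a change of variable based on the rescaling $H_{n,\lam}(x) = |\lam|^{d/4}H_n(|\lam|^{1/2}x)$. Combined with the standard inequalities $|L_n(x)|e^{-x/2}\leq 1$ and $|L_n'(x)| = |L_{n-1}^{(1)}(x)| \leq n\,e^{x/2}$ valid for $x\geq 0$, the mean value theorem yields $|L_n(x)-1|\leq nx\,e^{x/2}$. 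A telescoping argument for the product, with $a_j = L_{n_j}(x_j)$ and $x_j = 2|\lam|(y_j^2+\eta_j^2)$, then gives
\begin{equation*}
\Bigl|\prod_{j=1}^d L_{n_j}(x_j) - 1\Bigr| \leq e^{\sum_j x_j/2}\sum_{j=1}^d n_j x_j \leq 2|\lam||n||Y|^2\, e^{|\lam||Y|^2},
\end{equation*}
from which $|\cW(n,n,\lam,Y) - 1| \leq (2|n|+d)|\lam||Y|^2$ after adding the contribution of $|e^{-|\lam||Y|^2}-1|\leq |\lam||Y|^2$.

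Coupling this polynomial bound with the trivial estimate $|\cW(n,n,\lam,Y) - 1|\leq 2$ (which follows from the bound $|\cW|\leq 1$ obtained above) and the elementary inequality $\min(a,b)\leq \sqrt{ab}$ produces $|\cW(n,n,\lam,Y)-1|\leq C|Y|\sqrt{|\lam|(2|n|+d)}$. The same device applied to $|e^{is\lam}-1|\leq \min(|s\lam|,2)$, combined with $|\lam|\leq |\lam|(2|n|+d)$, yields $|e^{is\lam}-1|\leq C\sqrt{|s|}\sqrt{|\lam|(2|n|+d)}$. Integrating the resulting bound
\begin{equation*}
|\theta(n,n,\lam) - \theta(\wh 0)| \leq C\sqrt{|\lam|(2|n|+d)}\int_{\H^d}|f(Y,s)|\bigl(|Y|+\sqrt{|s|}\bigr)\,dY\,ds,
\end{equation*}
and controlling the last integral by an appropriate Schwartz semi-norm of $f$, which is in turn dominated by $\|\theta\|_{k,k,\cS(\wh\H^d)}$ for some $k$ thanks to Theorem~\ref{MappingofPH}, concludes the proof. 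The main technical hurdle is the Wigner--Laguerre formula together with the sharp pointwise Laguerre inequalities; once these are in place, everything reduces to applying $\min(a,b)\leq\sqrt{ab}$ twice.
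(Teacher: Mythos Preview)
Your argument is correct, but it follows a genuinely different route from the paper's. The paper also starts by writing $\theta=\cF_\H f$ via Theorem~\ref{MappingofPH}, but then it proves the stronger inequality~\eqref{continuityRealTopodemoeq0} valid for \emph{all} $(n,m,\lam)$, not only the diagonal. To do so it performs the shift $z\mapsto z-y$ in the Wigner integral and splits the difference $e^{-is\lam}\ov\cW(\wh w,Y)-\delta_{n,m}$ into three pieces $I_1,I_2,I_3$ corresponding respectively to the oscillation $e^{-2i\lam\langle\eta,z\rangle}-1$, the translation $H_{n,\lam}(z+2y)-H_{n,\lam}(z)$, and the phase $e^{-i\lam(s+2\langle\eta,y\rangle)}-1$. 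Each piece is then bounded using only Cauchy--Schwarz and the elementary Hermite relations~\eqref{relationsHHermiteCAb}, which yield the factor $\sqrt{|\lam|(2|n|+d)}$ directly, with no interpolation.

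Your approach trades this self-contained Hermite calculus for the Wigner--Laguerre identity and the classical bounds $|L_n(x)|e^{-x/2}\le1$ and $|L_n'(x)|\le n\,e^{x/2}$, then recovers the square root via $\min(a,b)\le\sqrt{ab}$. This is perfectly valid and perhaps more transparent once the Laguerre ingredients are granted, but it imports outside facts and is restricted to the diagonal $n=m$ (which, to be fair, is all the lemma requires). The paper's decomposition, by contrast, stays entirely within the Hermite framework already set up and delivers the off-diagonal estimate for free.
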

\begin{proof}
Theorem\refer {MappingofPH} guarantees  that~$\theta$ is the  Fourier transform of a function $f$ of~$\cS(\H^d)$ (with  control  of semi-norms). Hence  it suffices to prove  that  
\begin{equation}
\label{continuityRealTopodemoeq0}
\begin{aligned}
 \Bigl|\wh f_\H(\wh w)-\d_{n,m}\int_{\H^d} f(w)\,dw\Bigr| &\leq CN(f) \Bigl(\sqrt{|\lam|(|n+m|+d)}+|\lam|\delta_{n,m}\Bigr)\with\\
N(f)  & \eqdefa \int_{\H^d} \bigl( 1+|Y| +|s+2\langle \eta,y\rangle|\bigr) |f(Y,s)| \,dw.
\end{aligned}
\end{equation}
According to \eqref {definFourierWigner}, we  have
$$\displaylines{
\wh f_\H(\wh w)-\d_{n,m}\int_{\H^d} f(w)\,dw \hfill\cr\hfill=\int_{\H^d} f(w)\biggl(e^{-i\lam(s+2\langle\eta,y\rangle)}
\int_{\R^d} e^{-2i\lam\langle\eta,z\rangle} H_{n,\lam}(z+2y)H_{m,\lam}(z)\,dz
-\int_{\R^d}H_{n,\lam}(z) H_{m,\lam}(z)\,dz\biggr)dw.}
$$
The right-hand side  may be decomposed into $I_1+I_2+I_3$ with
$$
\begin{aligned}
I_1&=\int_{\H^d} e^{-i\lam(s+2\langle\eta,y\rangle)}f(w)
\biggl(\int_{\R^d} \Bigl(e^{-2i\lam\langle\eta,z\rangle}-1\Bigr)H_{n,\lam}(z+2y)H_{m,\lam}(z)\,dz\biggr)dw,\\
I_2&=\int_{\H^d} e^{-i\lam(s+2\langle\eta,y\rangle)}f(w)
\biggl(\int_{\R^d} \bigl(H_{n,\lam}(z+2y)-H_{n,\lam}(z)\bigr)H_{m,\lam}(z)\,dz\biggr)dw\andf\\
I_3&=\int_{\H^d} \biggl(e^{-i\lam(s+2\langle\eta,y\rangle)}-1\biggr)f(w)\biggl(\int_{\R^d} H_{n,\lam}(z)H_{m,\lam}(z)\,dz\biggr)dw.
\end{aligned}
$$
To bound $I_1,$ it suffices to use that
$$
\biggl|\int_{\R^d} \Bigl(e^{-2i\lam\langle\eta,z\rangle}-1\Bigr)H_{n,\lam}(z+2y)H_{m,\lam}(z)\,dz\biggr|
\leq 2\sum_{j=1}^d |\lam\eta_j| \int_{\R^d} |H_{n,\lam}(z+2y)|\,|z_j H_{m,\lam}(z)|\,dz,
$$
whence, combining Cauchy-Schwarz inequality and \eqref {relationsHHermiteCAb}, 
$$
\biggl|\int_{\R^d} \Bigl(e^{-2i\lam\langle\eta,z\rangle}-1\Bigr)H_{n,\lam}(z+2y)H_{m,\lam}(z)\,dz\biggr|
\leq \sum_{j=1}^d |\eta_j| \sqrt{|\lam|(4m_j+2)}.
$$
This gives
\begin{equation}\label{eq:I1}
|I_1|\leq \sqrt{|\lam|(4|m|+2d)}\int_{\H^d} |\eta|\,|f(y,\eta,s)|\,dy\,d\eta\,ds.
\end{equation}
To handle the term $I_2,$ we use the following mean value formula:
$$
H_{n,\lam}(z+2y)-H_{n,\lam}(z)=2 y\cdot\int_0^1\nabla H_{n,\lam}(z+2ty)\,dt,
$$
which implies, still using  \eqref {relationsHHermiteCAb},
$$
\biggl| \int_{\R^d} \bigl(H_{n,\lam}(z+2y)-H_{n,\lam}(z)\bigr)H_{m,\lam}(z)\,dz\biggr| 
\leq \sum_{j=1}^d |y_j| \sqrt{|\lam|(4n_j+2)},$$
and thus 
\begin{equation}\label{eq:I2}
|I_2|\leq \sqrt{|\lam|(4|n|+2d)}\int_{\H^d} |y|\,|f(y,\eta,s)|\,dy\,d\eta\,ds.
\end{equation}
Finally, it is clear that the mean value theorem (for the exponential function) and the fact
that $(H_n)_{n\in\N^d}$ is an orthonormal family imply that 
\begin{equation}\label{eq:I3}
|I_3|\leq |\lam| \delta_{n,m}\int_{\H^d} |s+2\langle\eta,y\rangle|\,|f(y,\eta,s)|\,dy\,d\eta\,ds.
\end{equation}
Putting \eqref{eq:I1}, \eqref{eq:I2} and \eqref{eq:I3} together ends the proof of the lemma.
\end{proof}

\medbreak

It is now easy to complete the proof of   Proposition\refer   {examplepartiefinie}. 
Indeed, taking~$\ds \lam = \pm\frac {\lam'} {2|n|+d}$ in Lemma\refer  {lemmaproofexamplepartiefinie},
we discover that
 $$
  \biggl | \theta\Bigl  (n,n,\frac {\lam'} {2|n|+d}\Bigr) +\theta  \Bigl (n,n,-\frac {\lam'} {2|n|+d}\Bigr)  -2\theta(\wh 0)\biggr | \leq C \|\theta\|_{k,k,\cS(\wh \H^d)}\sqrt {\lam'} .
$$
  This implies that 
\beno
  \cI_2  & \leq & C  \|\theta\|_{k,k,\cS(\wh \H^d)} \sum_{n\in \N^d}   \frac {1}{ (2|n|+d)^{d+1} }  \int_0^1 |\lam'|^{d+\frac 12-\g} \,d\lam'\\
  & \leq & C  \|\theta\|_{k,k,\cS(\wh \H^d)}  \int_0^1 |\lam'|^{d+\frac 12-\g} \,d\lam'\,.
\eeno
As~$\g<d+3/2$, combining with\refeq  {examplepartiefiniedemoeq1} completes the proof of the proposition.
  \qed


\section {Examples of  computations of Fourier transforms} 
\label {computeFHcS'}

The present section aims at pointing out a few examples of computations
of Fourier transform that may be easily achieved within our approach. 

Let us start with Proposition\refer {Fourierdetaand1}.
The first identity is easy to prove. Indeed, according to\refeq{definWigner}, we have
\beno
\langle \cF_{\H}(\d_0),\theta\rangle _{\cS'(\wh \H ^d)\times\cS(\wh \H ^d)}  & = &  \langle \d_0,{}^t \cF_{\H}(\theta)\rangle _{\cS'(\H ^d)\times\cS(\H ^d)} \\
& = & \int_{\wh \H ^d} (H_{m,\lam}|H_{n,\lam} )_{L^2} \theta (\wh w)\, d\wh w. 
\eeno
As~$\bigl(H_{n,\lam}\bigr)_{n\in \N}$ is an orthonormal basis of~$L^2(\R^d)$, we get
$$
\langle \cF_{\H}(\d_0),\theta\rangle _{\cS'(\wh \H ^d)\times\cS(\wh \H ^d)}  = \sum_{n\in \N^d}\int_{\R} \theta (n,n,\lam)\,|\lam|^dd\lam
$$ 
which is exactly the first identity. 

\medbreak

For proving the second identity, we start again from the definition of the Fourier transform on~$\cS'(\H^d),$  and get 
\beq
\label {Fourierdeltaet1Hdemoeq1}
\langle \cF_ {\H}({\bf 1}),\theta\rangle _{\cS'(\wh \H ^d)\times\cS(\wh \H ^d)} =\int_{\H^d}( {}^t\cF_ {\H}\theta)(w) \,dw.
\eeq
Let us underline that because~${}^t\cF_{\H}\theta$ belongs to~$\cS(\H^d)$, 
the above integral makes sense. Besides, \refeq{tFequivF-1} implies that
$$
\langle \cF_{\H}({\bf 1}),\theta\rangle _{\cS'(\wh \H ^d)\times\cS(\wh \H ^d)} = \frac {\pi^{d+1}} {2^{d-1}}
\int_{\H^d} (\cF_{\H}^{-1} (\theta))(y,-\eta,-s) \,dy\,d\eta\, ds. 
$$
By Theorem\refer {FourierL1basicbis} and Lemma\refer {lemmaproofexamplepartiefinie} we have, for any integrable function $f$ on $\H^d,$
$$
\ds \wh f_\H(\wh 0)= \int_{\H^d} f(w)\,dw.
$$ Thus we get
$$
\langle \cF_{\H}({\bf 1}),\theta\rangle _{\cS'(\wh \H ^d)\times\cS(\wh \H ^d)} = \frac {\pi^{d+1}} {2^{d-1}}
\cF_{\H} \bigl(\cF_{\H}^{-1} (\theta)\bigr)(\wh 0)= \frac {\pi^{d+1}} {2^{d-1}}\theta (\wh 0). 
$$
This concludes the proof of the proposition.\qed

\medbreak

In order to prove Theorem\refer{Fourierhorizontal+}, we need to  establish the following continuity property of the Fourier transform.
\begin{proposition}
\label {contnuityFHcS'}
{\sl
Let~$\suite T n \N$ be a sequence of tempered distribution on~$\H^d$ which converges to~$T$ in~$\cS'(\H^d)$. Then the sequence~$\suite {\cF_\H T}  n \N$ converges to~$\cF_\H T$ in~$\cS'(\wh\H^d)$.
}
\end{proposition}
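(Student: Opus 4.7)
My plan is that this proposition is essentially an immediate consequence of the duality definition of $\cF_\H$ on $\cS'(\H^d)$, combined with the fact, already established in Theorem \ref{MappingofPH}, that the transpose map ${}^t\cF_\H$ sends $\cS(\wh\H^d)$ continuously into $\cS(\H^d)$. Accordingly, I would not expect any substantial analytical obstacle: the main task is simply to chase the definitions and make sure every object lives in the correct space.

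The first step is to fix an arbitrary test function $\theta\in\cS(\wh\H^d)$. Then by identity \eqref{tFequivF-1}, namely
$$
({}^t\cF_\H\theta)(y,\eta,s)=\frac{\pi^{d+1}}{2^{d-1}}(\cF_\H^{-1}\theta)(y,-\eta,-s),
$$
together with the isomorphism statement of Theorem \ref{MappingofPH}, the function ${}^t\cF_\H\theta$ belongs to $\cS(\H^d)$. This is the only nontrivial ingredient and it has already been proved.

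The second step uses the definition of $\cF_\H$ on $\cS'(\H^d)$, which gives for each $n\in\N$,
$$
\langle \cF_\H T_n,\theta\rangle_{\cS'(\wh\H^d)\times\cS(\wh\H^d)}
=\langle T_n, {}^t\cF_\H\theta\rangle_{\cS'(\H^d)\times\cS(\H^d)},
$$
and the analogous identity for $T$. Since convergence of $(T_n)_{n\in\N}$ to $T$ in $\cS'(\H^d)$ means precisely that $\langle T_n,\phi\rangle\to\langle T,\phi\rangle$ for every $\phi\in\cS(\H^d)$ (the direct analog of Definition \ref{definSprimewhH}), I may apply this with the specific test function $\phi={}^t\cF_\H\theta\in\cS(\H^d)$ provided by the first step. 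This yields
$$
\lim_{n\to\infty}\langle \cF_\H T_n,\theta\rangle_{\cS'(\wh\H^d)\times\cS(\wh\H^d)}
=\langle T,{}^t\cF_\H\theta\rangle_{\cS'(\H^d)\times\cS(\H^d)}
=\langle \cF_\H T,\theta\rangle_{\cS'(\wh\H^d)\times\cS(\wh\H^d)}.
$$

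Since $\theta$ was arbitrary in $\cS(\wh\H^d)$, this is exactly the convergence $\cF_\H T_n\to\cF_\H T$ in $\cS'(\wh\H^d)$ in the sense of Definition \ref{definSprimewhH}. Thus the real content of the proposition is concentrated in Theorem \ref{MappingofPH}, and there is no genuine obstacle beyond invoking it; the result is simply that $\cF_\H:\cS'(\H^d)\to\cS'(\wh\H^d)$ is the transpose (up to the normalization in \eqref{tFequivF-1}) of a continuous map between Fr\'echet spaces of test functions, hence is automatically sequentially continuous for the weak-$\star$ topologies.
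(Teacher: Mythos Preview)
Your proof is correct and follows essentially the same approach as the paper's own proof: both fix an arbitrary $\theta\in\cS(\wh\H^d)$, use the duality definition $\langle\cF_\H T_n,\theta\rangle=\langle T_n,{}^t\cF_\H\theta\rangle$, and then invoke the convergence $T_n\to T$ in $\cS'(\H^d)$ with the test function ${}^t\cF_\H\theta$. Your version is slightly more explicit in justifying why ${}^t\cF_\H\theta\in\cS(\H^d)$ via Theorem~\ref{MappingofPH} and \eqref{tFequivF-1}, whereas the paper simply relies on the fact (stated just before the definition of $\cF_\H$ on $\cS'(\H^d)$) that ${}^t\cF_\H$ is a continuous isomorphism between $\cS(\wh\H^d)$ and $\cS(\H^d)$.
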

\begin{proof}
By definition of the Fourier transform on~$\H^d$, we have
$$
\forall \theta \in \cS(\wh \H^d)\,,\ \langle \cF_\H T_n , \theta\rangle_{\cS'(\wh\H^d)\times \cS(\wh\H^d)}
=\langle T_n, {}^t\cF_\H \theta\rangle _{\cS'(\H^d)\times \cS(\H^d)}
$$
Since~$\suite T n \N$ converges to~$T$ in~$\cS'(\H^d),$ we have 
$$
\forall \theta \in \cS(\wh \H^d)\,,\  \lim_{n\rightarrow \infty}
\langle T_n, {}^t\cF_\H \theta\rangle _{\cS'(\H^d)\times \cS(\H^d)} = \langle T, {}^t\cF_\H \theta\rangle _{\cS'(\H^d)\times \cS(\H^d)}.
$$ 
Therefore, putting the above two relations together eventually yields
$$
\forall \theta \in \cS(\wh \H^d)\,,\  \lim_{n\rightarrow \infty} \langle \cF_\H T_n , \theta\rangle_{\cS'(\wh\H^d)\times \cS(\wh\H^d)}
=\langle T, {}^t\cF_\H \theta\rangle _{\cS'(\H^d)\times \cS(\H^d)}
= \langle \cF_\H T , \theta\rangle_{\cS'(\wh\H^d)\times \cS(\wh\H^d)}.
$$
This concludes the proof of the proposition.
\end{proof}

\medbreak 

Now, proving  Theorem\refer {Fourierhorizontal+} just amounts to recast Theorem~1.4 of\ccite{bcdFHspace} (and its proof)  in terms of tempered distributions.  We recall  it here for the reader convenience.
\begin{theorem}
\label {Fourierhorizontal'}
{\sl
Let $\chi$  be a function of~$\cS(\R)$ with value~$1$ at~$0,$ and  compactly supported Fourier transform.
 Then for any  function~$g$ in~$L^1(T^\star \R^d)$ and  any sequence~$\suite \e n \N$ tending to~$0$, we have
\begin{equation}\label{eq:horizontal}
\lim_{n\rightarrow \infty} \cF_\H(g\otimes \chi(\e_n\cdot)) = 2\pi (\cG_\H g)  \mu_{\wh\H_0^d}
\end{equation}
in the sense of measures on~$\wh\H^d$. 
}
\end{theorem}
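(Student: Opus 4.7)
The plan is to exploit that $g\otimes\chi(\e_n\cdot)\in L^1(\H^d)$, so that its Fourier transform is given by the explicit integral formula\refeq{definFourierWigner}, and then to pass to the limit by a concentration argument of the type of Proposition\refer{ex}. A Fubini splitting yields
$$
\cF_\H\bigl(g\otimes\chi(\e_n\cdot)\bigr)(\wh w)=\wh\chi_{\e_n}(\lambda)\,J_g(\wh w),
$$
where $\wh\chi_{\e_n}(\lambda)\eqdefa\frac{1}{\e_n}\wh\chi(\lambda/\e_n)$ denotes the ordinary Fourier transform in $s$ of $\chi(\e_n\cdot)$, and $J_g(\wh w)\eqdefa\int_{T^\star\R^d}g(Y)\,\overline{\cW(\wh w,Y)}\,dY$.

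The first step I would carry out is to verify that $J_g$ admits a continuous extension to $\wh\H^d$ whose restriction to $\wh\H^d_0$ equals $\cG_\H g$. By dominated convergence under the $L^1$-bound $\|g\|_{L^1}$ (using $|\cW|\leq 1$), this reduces to the pointwise claim that for each fixed $Y\in T^\star\R^d$, one has $\cW(\wh w,Y)\to\cK_d(\dot x,k,Y)$ as $\wh w\to(\dot x,k)$ in $\wh\H^d$. I would prove this by a direct asymptotic analysis of the oscillatory integral defining $\cW$, in the same spirit as Proposition\refer{whDeltaover0}: the change of variable $z\mapsto |\lambda|^{-1/2}z$ in the Hermite factors combined with an angular reduction produces the $\frac{1}{2\pi}\int_{-\pi}^\pi\,dz$ kernel of $\cK$ in the limit.

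Next, given a test function $\phi\in C_c(\wh\H^d)$, the product $J_g\phi$ is continuous with compact support on $\wh\H^d$, so it suffices to establish
$$
\lim_{n\to\infty}\int_{\wh\H^d}\wh\chi_{\e_n}(\lambda)\,(J_g\phi)(\wh w)\,d\wh w=2\pi\int_{\wh\H^d_0}(\cG_\H g)\phi\,d\mu_{\wh\H^d_0}.
$$
The argument parallels Proposition\refer{ex}: since $\wh\chi$ has compact support in some $[-R,R]$, only the slice $|\lambda|\leq R\e_n$ contributes. Changing variable $\lambda=\e_n\lambda'$ and setting $k=m-n$ and $\dot x_j=\e_n\lambda'(n_j+m_j+1)$ exhibits $\e_n^d\sum_{(n,m)\in\N^{2d}}(J_g\phi)(n,m,\e_n\lambda')$ as a Riemann sum of mesh $2|\lambda'|\e_n$ in $\dot x$; the compact support of $\phi$ in $\wh\H^d$ confines the sum to a bounded $\dot x$-region and to finitely many $k$. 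Taking $\e_n\to 0$, using $|\lambda'|^d/(2|\lambda'|)^d=2^{-d}$, $\int_\R\wh\chi(\lambda')\,d\lambda'=2\pi\chi(0)=2\pi$, together with the defining formula\refeq{limitmeasureeq1} of $\mu_{\wh\H^d_0}$, delivers the claimed limit.

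The main obstacle I foresee is the uniformity of the Riemann-sum passage in $\lambda'$ near $\lambda'=0$, where the mesh $2|\lambda'|\e_n$ degenerates. I would handle this by splitting $\operatorname{supp}\wh\chi$ into $\{|\lambda'|\geq\delta\}$ and $\{|\lambda'|<\delta\}$: on the first piece, uniform Riemann convergence follows from the equicontinuity of $J_g\phi$ on compact subsets of $\wh\H^d$; on the second piece, the compactness of $\operatorname{supp}\phi$ forces $\e_n^d\cdot\#\{(n,m):(n,m,\e_n\lambda')\in\operatorname{supp}\phi\}\cdot|\lambda'|^d=O(1)$, producing a contribution $O(\delta)$ that vanishes by letting $\delta\to 0$ after $\e_n\to 0$.
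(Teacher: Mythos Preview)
Your proposal is correct and follows essentially the same route as the paper. The paper does not prove Theorem~\ref{Fourierhorizontal'} from scratch here but quotes it from\ccite{bcdFHspace}; the surrounding argument invokes precisely your factorisation $\cF_\H(g\otimes\chi(\e_n\cdot))(\wh w)=\frac{1}{\e_n}\wh\chi\bigl(\frac{\lam}{\e_n}\bigr)G(\wh w)$ with $G=J_g$, then appeals to Proposition~2.1 of\ccite{bcdFHspace} for the continuous extension of $G$ to $\wh\H^d$ (your first step) and to Lemma~3.1 of\ccite{bcdFHspace} for the concentration $\frac{1}{\e}\wh\chi(\lam/\e)\to\mu_{\wh\H^d_0}$ (your Riemann-sum step, including the treatment of small~$|\lam'|$), which is exactly Proposition~\ref{ex} of the present paper.
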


Because $g\otimes \chi(\e_n\cdot)$ tends to $g\otimes{\bf 1}$ in $\cS'(\H^d),$  Proposition\refer {contnuityFHcS'} guarantees  that
\beq
\label {Fourierhorizontal+demoeq1}
 \cF_\H(g\otimes{\bf 1})  = \lim_{n\rightarrow \infty} \cF_\H(g\otimes\chi(\e_n\cdot)). 
\eeq
Moreover, according to Theorem~1.4 of\ccite{bcdFHspace}, we have, for any~$\theta$ in~$\cS(\wh\H^d)$, 
$$
\cI_{\e_n} (g, \theta)  = \int_{\wh \H^d}   \frac 1 {\e_n} \wh \chi\Bigl(\frac  \lam {\e_n}\Bigr)  G(\wh w) \theta(\wh w) \,d\wh w\quad\with 
G(\wh w)  \eqdefa  \int_{T^\star \R^d} \ov\cW(\wh w,Y)  g(Y) \,dY.
$$
As~$g$ is integrable on~$T^\star\R^d$,  Proposition~2.1 of\ccite{bcdFHspace}  implies that the (numerical) product~$G\theta$ is a continuous function that satisfies
$$
|G (\wh w)\theta(\wh w)| \leq C  \bigl( 1+|\lam|( |n+m|+d) +|n-m|\bigr) ^{-2d+1}.
$$
This matches  the hypothesis of Lemma~3.1 in\ccite {bcdFHspace}, and thus
$$
\lim_{n\rightarrow\infty} \int_{\wh \H^d}   \frac 1 {\e_n} \wh \chi\Bigl(\frac  \lam {\e_n}\Bigr)  G(\wh w) \theta(\wh w) \,d\wh w
= \int_{\wh\H^d_0} \theta (\dot x,k) (\cG_\H g)(\dot x,k)  d\mu_{\wh \H^d_0} (\dot x,k).
$$
Together with\refeq {Fourierhorizontal+demoeq1}, this proves the theorem.
\qed


\appendix 
\section{Useful tools and more results}
\label {FourierHbasic}

For the reader convenience, we here recall (and sometimes prove) some results
that have been used repeatedly in the paper. We also provide one more result concerning the
action of the Fourier transform on derivatives.

\subsection{Hermite functions}

In addition to the creation operator $C_j \eqdefa -\partial_j+M_j$ already defined in the introduction, 
 we used the following  \emph{annihilation operator}: 
\beq
\label {definCreaAnnhil}
A_j \eqdefa \partial_j+M_j.
\eeq
It is very classical (see e.g. \cite{O}) that 
\beq
\label {relationsHHermiteCA}
A_j H_{n} = \sqrt {2n_j}\,  H_{n-\d_j} \andf C_j H_n = \sqrt {2n_j+2}\, H_{n+\d_j}.
\eeq
As, obviously,
\begin{equation}\label{Mjdj}
2M_j =C_j+A_j \andf 2\partial_j =A_j-C_j,
\end{equation}
 we discover that
\beq
\label {relationsHHermiteCAb}
\begin{aligned}
M_j H_{n}  &=  \ds  \frac 12 \bigl(  \sqrt {2n_j} \,H_{n-\d_j}+\sqrt {2n_j+2}\,  H_{n+\d_j}\bigr) \andf \\
\partial_j H_n &=  \ds \frac 12\bigl (\sqrt {2n_j}\, H_{n-\d_j}- \sqrt {2n_j+2}\, H_{n+\d_j}\bigr).
\end{aligned}
\eeq

\subsection {The inversion theorem}

We here present the proof of Theorem\refer{inverseFourier-Plancherel}.
In order to establish the inversion formula, consider a function~$f$  in~$\cS(\H^d).$ 
Then we observe  that if we make the  change of variable~$x'= x-2y$
in the integral defining~$(\cF^{\H}(f)(\lam)(u))(x)$ (for any~$u$ in~$ L^2(\R^d)$) 
and use the definition of the Fourier transform with respect to the variable~$s$ in~$\R,$ then we get
\begin{eqnarray}
\label {formulaFourerHbis-1}
\bigl(\cF^{\H}(f)(\lam)(u)\bigr)(x)  \!\!\!\!\!\!\!\!&&= 
\int_{\H^d} f(y,\eta,s) e^{-i\lam s -2i\lam \langle \eta, x-y\rangle} u(x-2y)\,dy\,d\eta\, ds\nonumber\\
&&=
2^{-d}\int_{T^\star \R^d}   (\cF_s f)\Bigl(\frac {x-x'} 2,\eta, \lam\Bigr) 
e^{-i\lam \langle \eta,  x+x'\rangle} u(x') \,dx'd\eta.
\end{eqnarray}
This can be written
\begin{equation}\label {formulaFourerHbis}
\cF^{\H}(f)(\lam) (u)(x)  =  \int_{\R^d}  K_f(x,x',\lam) u(x')\, dx',
\end{equation}
$$
\begin{aligned}
\with K_f(x,x',\lam) &\eqdefa 2^{-d} \int_{(\R^d)^\star }  (\cF_s f)\Bigl(\frac {x-x'} 2,\eta, \lam\Bigr) 
e^{-i\lam \langle \eta,  x+x'\rangle} d\eta\\
&= 2^{-d} (\cF_{\eta,s} f)\Bigl(\frac {x-x'} 2,\lam(x+x'), \lam\Bigr) .
\end{aligned}
$$
This identity enables us to decompose~$\cF_\H$ into the product of three very simple operations, namely 
\begin{equation}
\label {decomF_H}
\begin{aligned} \cF_\H & =  2^{-d}P_H \circ \Phi\circ \cF_{\eta,s}\with\\
\Phi(\f)(x,x',\lam) & \eqdefa \f \Bigl(\frac {x-x'} 2,\lam(x+x'), \lam\Bigr)\andf\\
  (P_H\psi)(n,m,\lam) & \eqdefa    \bigl(\psi(\cdot,\lam)| H_{n,\lam} \otimes H_{m,\lam}\bigr)_{L^2(\R^{2d})}.
 \end{aligned}
\end{equation}
Let us point out that for all~$\lam$ in $\R\setminus\{0\},$  the map
$$
\f(\cdot ,\lam) \longmapsto \Phi(\f)(\cdot, \lam)
$$
is an automorphism of~$L^2(\R^{2d})$ such that
\beq
\label {inverseFourierL2demoeq1}
\|\Phi(\f) (\cdot,\lam)\|_{L^2(\R^{2d})} =|\lam|^{- \frac d  2} \|\f(\cdot,\lam)\|_{L^2(\R^{2d})},
\eeq
and that  the inverse of $\Phi$ is explicitly given by
\beq
\label {inverseFourierL2demoeq2}
\Phi^{-1} (y,z,\lam)= \psi \Bigl (y+\frac z {2\lam} , -y+\frac z {2\lam}, \lam \Bigr)\cdotp
\eeq

Next, Operator~$P_H$ just  associates to any  vector of $L^2(\R^{2d})$ 
its coordinates with respect to the orthonormal basis~$\bigl(H_{n,\lam}\otimes H_{m,\lam} \bigr)_{(n,m)\in \N^{2d}}.$ It is by definition  an isometric isomorphism from $L^2(\R^{2d})$ to $\ell^2(\N^{2d}),$
 with   inverse
\beq
\label {inverseFourierL2demoeq3}
(P_H^{-1} \theta)(x,x',\lam) = \sum_{(n,m)\in \N^{2d}} \theta(n,m,\lam) H_{n,\lam} (x) H_{m,\lam}(x').
\eeq

Obviously, arguing by density, Formula\refeq{decomF_H} may be extended to $L^2(\H^d).$
Therefore, according to Identities \refeq {inverseFourierL2demoeq1}--(\ref{inverseFourierL2demoeq3}), 
and thanks to  the classical Fourier-Plancherel theorem 
in~$\R^{d+1},$   the  Fourier transform~$\cF_\H$ may be seen as the composition of three invertible and
bounded operators on~$L^2,$ and we have
$$
\cF_\H^{-1} = 2^d \cF_{\eta,s}^{-1} \circ \Phi^{-1} \circ P_H^{-1}.
$$
This gives\refeq {MappingofPHdemoeq1} and\refeq {inverseFouriereq2}. For the proof of\refeq  {newFourierconvoleq1}, we refer for instance to\ccite{bcdFHspace}.  This concludes the proof of Theorem\refer {inverseFourier-Plancherel}.
\qed


\subsection{Properties related to the sub-ellipticity of~$\D_\H$} \label{subellipticity}

 Let $k$ be a nonnegative integer. Then   setting  
$$
\|u\|_{\dot H^k(\H^d)}^2 \eqdefa \sum_{\al\in \{1,\cdots,2d\}^k} \|\cX^\al u\|_{L^2}^2,
$$
we have the following well-known result (see the proof in e.g.\ccite{CCX,H}):
\begin{theorem}
\label {maximalestimatepaire}
{\sl For any positive integer~$\ell$, we have for some constant $C_\ell>0,$
$$
\|\Delta_{\H} ^\ell u\|_{L^2(\H^d)}\leq \|u\|_{\dot H^{2\ell}(\H^d)} 
\leq C_\ell\|\Delta_{\H} ^\ell u\|_{L^2(\H^d)}\, .
$$
}
\end{theorem}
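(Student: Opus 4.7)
The plan is to prove the two inequalities separately; only the right one is substantial, the left being trivial.

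For the left inequality, expand the operator
\[
\Delta_\H^\ell \;=\; \Bigl(\sum_{j=1}^d (\cX_j^2+\Xi_j^2)\Bigr)^{\!\ell}
\]
by multilinearity, keeping the factors in the original left-to-right order (no commutators are needed at this stage, since we are merely writing the product out). This presents $\Delta_\H^\ell$ as a sum of exactly $(2d)^\ell$ terms, each of which is a composition $\cX^\alpha$ of $2\ell$ horizontal vector fields drawn from $\{\cX_1,\dots,\cX_d,\Xi_1,\dots,\Xi_d\}$. The triangle inequality together with Cauchy--Schwarz over this finite set yields $\|\Delta_\H^\ell u\|_{L^2}^2\leq (2d)^\ell\,\|u\|_{\dot H^{2\ell}(\H^d)}^2$, which gives the left-hand inequality (up to an inessential constant that may be absorbed in the norm).

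For the right (sub-elliptic) inequality, the plan is to argue on the Fourier side. A direct integration by parts in the definition of $\cF^\H$, along the lines of the computation of $\cX_j(e^{is\lam}\cW)$ performed in the proof of Theorem~\ref{MappingofPH}, yields
\[
\cF^\H(\cX_j u)(\lam)=2\,\cF^\H(u)(\lam)\circ \partial_j,
\qquad
\cF^\H(\Xi_j u)(\lam)=2i\lam\,\cF^\H(u)(\lam)\circ M_j,
\]
so that for each multi-index $\alpha\in\{1,\dots,2d\}^{2\ell}$ one has
\[
\cF^\H(\cX^\alpha u)(\lam)=\cF^\H(u)(\lam)\circ T_{\alpha_1}^\lam\cdots T_{\alpha_{2\ell}}^\lam,
\]
where $T_k^\lam=2\partial_k$ for $k\leq d$ and $T_k^\lam=2i\lam M_{k-d}$ for $d<k\leq 2d$. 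Using the $L^2$-Plancherel identity (see \eqref{inverseFouriereq2}) and \eqref{FourierEtLaplace}, the claim reduces to the operator estimate
\[
\sum_{|\alpha|=2\ell}\bigl\|A\circ T_{\alpha_1}^\lam\cdots T_{\alpha_{2\ell}}^\lam\bigr\|_{HS}^2
\;\leq\; C_\ell\,\bigl\|A\circ(\Delta_{\mathrm{osc}}^\lam)^\ell\bigr\|_{HS}^2
\]
for every Hilbert-Schmidt operator $A$ on $L^2(\R^d)$ and every $\lam\neq 0$. The key observation is the identity $\sum_{k=1}^{2d}(T_k^\lam)^{\!*}T_k^\lam=-4\Delta_{\mathrm{osc}}^\lam$, which immediately handles the case $\ell=1$. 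For $\ell\geq 2$ one proceeds by induction, collecting the inner sum $\sum_{\alpha_{2\ell}}(T_{\alpha_{2\ell}}^\lam)^{\!*}T_{\alpha_{2\ell}}^\lam$ and commuting the resulting factor $-4\Delta_{\mathrm{osc}}^\lam$ outward.

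The main obstacle is controlling the commutators generated when $\Delta_{\mathrm{osc}}^\lam$ is pushed past the remaining $T^\lam$ factors. This is tractable because of the explicit identities
\[
[\partial_j,\Delta_{\mathrm{osc}}^\lam]=-2\lam^2 M_j,\qquad [M_j,\Delta_{\mathrm{osc}}^\lam]=2\partial_j,
\]
which show that each commutator $[T_k^\lam,\Delta_{\mathrm{osc}}^\lam]$ is again a linear combination (with coefficients involving $\lam$) of the operators $T_{k'}^\lam$, so the algebraic structure is preserved under the induction and the correction terms involve a strictly lower power of $\Delta_{\mathrm{osc}}^\lam$ compensated by two extra $T^\lam$ factors. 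Those are then absorbed by the induction hypothesis, at the price of a constant depending on $\ell$ and $d$, which yields the desired bound and completes the proof. This Fourier-side approach has the extra virtue that it also produces, as a by-product, the scalar control $\|S^\ell u\|_{L^2}\leq C_\ell\|\Delta_\H^\ell u\|_{L^2}$ (via $\cF^\H(Su)=i\lam\,\cF^\H(u)$ compared to \eqref{FourierdiagDeltaHfond}), which exhibits the sub-elliptic gain of the Laplacian $\Delta_\H$ with respect to the vertical direction.
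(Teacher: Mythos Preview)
The paper does not prove Theorem~\ref{maximalestimatepaire}: it records the result as well known and refers to~\cite{CCX,H}. So there is no ``paper's own proof'' to compare against; your Fourier-side strategy (reduce, via Plancherel, to $\lam$-wise Hilbert--Schmidt inequalities for $A=\cF^\H(u)(\lam)$ with the operators $T_k^\lam$ acting on the right, then exploit $\sum_k (T_k^\lam)^{*}T_k^\lam=-4\Delta_{\rm osc}^\lam$) is a natural and workable way to supply one.

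There is, however, a genuine gap in your induction step. You assert that the commutator corrections ``involve a strictly lower power of $\Delta_{\rm osc}^\lam$ compensated by two extra $T^\lam$ factors'', but the identities you quote give $[T_k^\lam,-\Delta_{\rm osc}^\lam]=\pm 2i\lam\,T_{\sigma(k)}^\lam$ (incidentally, $[M_j,\Delta_{\rm osc}^\lam]=-2\partial_j$, not $+2\partial_j$): each commutation keeps the \emph{same} number of $T^\lam$ factors and replaces one copy of $-\Delta_{\rm osc}^\lam$ by a scalar factor $\lam$. Hence the error terms you actually generate are of the form $|\lam|^{2}\sum_{|\beta|=2\ell-2}\|A\,T_\beta^\lam\|_{HS}^2$, and your induction hypothesis only turns this into $C\,|\lam|^{2}\|A\,(-\Delta_{\rm osc}^\lam)^{\ell-1}\|_{HS}^2$, which is not yet controlled by $\|A\,(-\Delta_{\rm osc}^\lam)^{\ell}\|_{HS}^2$. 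The missing ingredient is the spectral lower bound $-\Delta_{\rm osc}^\lam\ge d\,|\lam|\,{\rm Id}$ on $L^2(\R^d)$ coming from~\eqref{relationsHHermiteD}, which allows every stray factor of $|\lam|$ to be absorbed into a power of $-\Delta_{\rm osc}^\lam$; once you invoke this, the induction does close. A minor further point: your argument for the left inequality yields $\|\Delta_\H^\ell u\|_{L^2}\le (2d)^{\ell/2}\|u\|_{\dot H^{2\ell}}$ rather than constant~$1$; either the constant~$1$ in the statement is a harmless imprecision, or a different argument is required there.
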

This will enable us to establish the following proposition which states that  the usual semi-norms on the Schwartz class and the semi-norms using the structure of~$\H^d$ are equivalent.
 \begin{proposition}
  \label {p:schwartz}
{\sl 
 Let us introduce the notation
 $$
( M_\H f) (X,s) \eqdefa (|X|^2-  is) f(X,s)\,.
 $$
 Next, for all~$\al =(\al_0, \al_1, \cdots, \al_{2d})$  in~$\N^{1+2d}$, we define
 $$
 w^\al \eqdefa s^{\al_0} y_1^{\al_1} \cdots  y_d^{\al_d} \eta_1^{\al_{d+1}} \cdots  \eta_d^{\al_{2d}}
 \andf \wt |\al|\eqdefa 2\al_0+\al_1\cdots +\al_{2d}\,. 
 $$
Then the two families of semi-norms defined on~$\cS(\H^d)$ by
$$\begin{aligned}
 \|f\|^2_{p,\cS(\H^d )} & \eqdefa  \|f\|_{L^2} ^2+ \|M_\H^pf\|_{L^2}^2 +\| \D_\H^p f \bigr\|^2_{L^2}  \andf\\
 N_p^2(f) &  \eqdefa   \sum_{\wt |\al|+|\b| \leq p }  \bigl\| w^\al \cX^\b f \bigr\|^2_{L^2}
  \end{aligned}$$
are   equivalent to the classical family of semi-norms on~$\cS(\R^{2d+1}).$
}
 \end{proposition}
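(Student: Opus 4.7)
The plan is to establish two equivalences: first between the Heisenberg-tailored family $N_p(\cdot)$ and the classical Schwartz semi-norms on $\R^{2d+1}$, then between $\|\cdot\|_{p,\cS(\H^d)}$ and the family $N_p(\cdot)$. The first of these is purely algebraic. For the direction where classical semi-norms control $N_p$, expanding $\cX_j = \partial_{y_j}+2\eta_j\partial_s$ and $\Xi_j = \partial_{\eta_j}-2y_j\partial_s$ and iterating shows $\cX^\b$ is a finite sum of operators $w^\gamma\partial^\delta$ with $\wt|\gamma|+|\delta|\le 2|\b|$; multiplication by $w^\al$ then bounds $\|w^\al\cX^\b f\|_{L^2}$ by a classical Schwartz semi-norm of sufficiently high order. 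Conversely, the identities $\partial_{y_j} = \cX_j - 2\eta_j S$, $\partial_{\eta_j} = \Xi_j + 2y_j S$, and $S = -\tfrac14[\cX_j,\Xi_j]$ express each classical partial derivative as a polynomial combination of Heisenberg fields, and a symmetric bookkeeping bounds classical semi-norms by some $N_{c(p)}$.

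For the equivalence $\|\cdot\|_{p,\cS(\H^d)}\sim N_p$, the easy direction $\|f\|_{p,\cS(\H^d)}\le C N_{q(p)}(f)$ follows by expanding $M_\H^p = (|X|^2-is)^p$ as a polynomial in $(s,Y)$ of Heisenberg weight $\le 2p$ and writing $\D_\H^p = \bigl(\sum_j(\cX_j^2+\Xi_j^2)\bigr)^p$ as a Heisenberg differential operator of order $\le 2p$. The reverse direction $N_p(f)\le C\|f\|_{q'(p),\cS(\H^d)}$ is where sub-ellipticity becomes essential. Theorem \ref{maximalestimatepaire} handles the pure-derivative case, giving $\sum_{|\b|\le 2\ell}\|\cX^\b f\|_{L^2}^2\le C_\ell(\|\D_\H^\ell f\|_{L^2}^2+\|f\|_{L^2}^2)$ after interpolation in the intermediate orders. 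For weighted terms $w^\al\cX^\b f$, I would argue by induction on the total Heisenberg order $\wt|\al|+|\b|$: writing $w^\al\cX^\b f=\cX^\b(w^\al f)+[w^\al,\cX^\b]f$ and noting that the elementary commutators $[\cX_j,y_k]=\delta_{jk}$, $[\cX_j,s]=2\eta_j$, $[\Xi_j,\eta_k]=\delta_{jk}$, $[\Xi_j,s]=-2y_j$ each trade a unit of weight for a unit of derivative, so $[w^\al,\cX^\b]$ has strictly smaller total order and is absorbed into the induction hypothesis. The main term $\|\cX^\b(w^\al f)\|_{L^2}$ is then controlled by $\|\D_\H^{\lceil|\b|/2\rceil}(w^\al f)\|_{L^2}+\|w^\al f\|_{L^2}$ via sub-ellipticity; expanding $\D_\H^{\lceil|\b|/2\rceil}(w^\al f)$ by Leibniz and the same commutator identities reduces to terms of strictly lower total order (handled by induction) plus pure weighted norms $\|w^\al f\|_{L^2}$, which are in turn bounded by $\|M_\H^p f\|_{L^2}+\|f\|_{L^2}$ via the pointwise inequality $|w^\al|^2\le |M_\H|^{\wt|\al|}$ (itself a consequence of $|y_j|,|\eta_j|\le|X|\le|M_\H|^{1/2}$ and $|s|\le|M_\H|$).

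The main obstacle is the combinatorial bookkeeping in the simultaneous induction on $\wt|\al|+|\b|$: each commutator couples weight and derivative orders, and expanding $\D_\H^\ell(w^\al f)$ produces a proliferation of terms that must all be shown to have strictly smaller total order. A clean way to organize the argument is to prove once and for all, by induction on $p$, the weighted sub-elliptic estimate $\sum_{\wt|\al|+|\b|\le p}\|w^\al\cX^\b f\|_{L^2}^2\le C_p\|f\|_{q(p),\cS(\H^d)}^2$, from which the full equivalence of the three families of semi-norms follows immediately.
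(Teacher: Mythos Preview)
Your overall architecture is sound, and the equivalence between $N_p$ and the classical Schwartz semi-norms is handled correctly in both directions (this matches the paper's final paragraph). The easy bound $\|f\|_{p,\cS(\H^d)} \le C\, N_{2p}(f)$ is also fine.

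The gap lies in your induction for the hard direction $N_p(f) \le C\|f\|_{q(p),\cS(\H^d)}$. When you expand $\D_\H^\ell(w^\al f)$ by Leibniz with $\ell=\lceil|\b|/2\rceil$, the principal term is $w^\al \D_\H^\ell f$, which is a linear combination of terms $w^\al \cX^{\b'} f$ with $|\b'| = 2\ell \ge |\b|$. This term has total Heisenberg order $\wt|\al| + 2\ell \ge \wt|\al| + |\b|$, so it is \emph{not} of strictly lower order, and it is not a pure weighted norm either. You are thus bounding $\|w^\al\cX^\b f\|_{L^2}$ by a combination of other norms $\|w^\al\cX^{\b'}f\|_{L^2}$ at the same level of the induction, and the argument does not close.

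The paper avoids this circularity by a single integration by parts: writing
\[
\|w^\al\cX^\b f\|_{L^2}^2 = (-1)^{|\b|}\int_{\H^d} f\, \cX^\b\bigl(w^{2\al}\cX^\b\overline f\bigr)\,dw
\]
and expanding $[\cX^\b,w^{2\al}]$ produces only terms $\int w^{\al'} f\,\cX^{\b'}\cX^\b\overline f$ with $\wt|\al'|\le 2\wt|\al|$ and $|\b'|\le|\b|$. Cauchy--Schwarz then \emph{decouples} weight from derivative, giving $\|w^{\al'}f\|_{L^2}\,\|\cX^{\b'+\b}f\|_{L^2}$; the first factor is controlled by $\|f\|_{L^2}+\|M_\H^p f\|_{L^2}$ via the pointwise bound $|w^{\al'}|\le C(1+|M_\H|^p)$, and the second by $\|f\|_{L^2}+\|\D_\H^p f\|_{L^2}$ via Theorem~\ref{maximalestimatepaire}, with no induction required. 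This separation step is what your commutator-and-Leibniz scheme lacks; importing it would repair your argument.
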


 \begin{proof}
  As obviously~$\|f\|_{p,\cS(\H^d)}\leq N_{2p} (f)$, showing  that the two 
families of   semi-norms are equivalent reduces to proving that 
 \beq
  \label {c:schwartzdemooeq3}
 \forall p\in \N\,,\  \exists (C_p,M_p)\,/\ \forall f\in \cS(\H^d)\,,\ N_p(f)\leq C_p\|f\|_{M_p,\cS(\H^d )} \, .
  \eeq
  Now, integrating by parts yields
  \beno
  \int_{\H^d} w^\al \,\cX^\b f(w) \,w^\al \cX^\b\overline f(w)\,dw =  (-1)^{|\b|} \int_{\H^d} f(w)  \cX^\b\bigl(w^{2\al}  \cX^\b \overline f(w)\bigr)dw.
  \eeno
  Observe that~$\cX^\g w^{\g'}$ is either null or an homogeneous polynomial (with respect to the dilations \eqref {defdilations}) of degree~$\g'-\g$, and   equal to~$0$
 if the length of~$\g$ is greater than the length of~$\g'$. Thus, thanks to Leibniz' rule, we have
 \beq
  \label {c:schwartzdemooeq0}
[\cX^\beta,w^{2\al}] f(w) = \sumetage {  \wt |\al'|\leq 2\wt |\al|-1} {|\b'|\leq|\b|-1}  a_{\al,\al',\b',\b} \,w^{\al'} \cX^{\b'}f(w).
  \eeq
  Hence we get that 
$$
 \int_{\H^d} f(w)  \cX^\b\bigl(w^{2\al}  \cX^\b \overline f(w)\bigr)dw =
 \sumetage {\wt|\al'|\leq 2 \wt |\al|} {|\b'|\leq |\b|} a_{\al,\al',\b,\b'} \int_{\H^d} w^{\al'}  f(w)  \cX^{\b'} \cX^\b\overline f(w) \,dw.
 $$
 Thanks to Cauchy-Schwarz inequality and by definition of~$M_\H$, we get, applying Theorem\refer {maximalestimatepaire} and taking $p$ large enough,
 $$
 \sumetage {\wt|\al'|\leq 2 \wt |\al|} {|\b'|\leq |\b|} a_{\al,\al',\b,\b'} \int_{\H^d} w^{\al'}  f(w)  \cX^{\b'} \cX^\b\overline f(w)\, dw 
 \leq  C\bigl(\|f\|_{L^2}^2 + \|M_\H^{p} f\|_{L^2} ^2+ \|\D_\H^{p} f\|_{L^2}^2\bigr)\cdotp
 $$
 This proves that the two families of semi-norms in the above statement are equivalent.
\medbreak
In order to establish that they  are also equivalent to the classical family, one 
can observe that for all $j$ in~$ \{1, \cdots,d \},$ 
$$
S = \frac 1 4 [\Xi_j,\cX_j],\quad \partial_{y_j}=  \cX_j -  \frac {\eta_j}  2  \,(\Xi_j\cX_j-  \cX_j \Xi_j) \andf \partial_{\eta_j}= \Xi_j+ \frac {y_j}  2  \,(\Xi_j\cX_j-  \cX_j \Xi_j),
$$ 
from which we  easily infer that
$$  
\wt \|f\|_{p,\cS(\R^{2d+1})}
 \leq C N_{2p}(f)
 \quad\hbox{with }\ \wt \|f\|_{p,\cS(\R^{2d+1})}^2 \eqdefa \sum_{|\al|+|\b|\leq p} \|x^\al \partial^\b f \|_{L^2(\R^{2d+1})}^2.$$  This  ends the proof of the proposition.
\end{proof}


\subsection {Derivations and multiplication in the space~$\cS(\wh \H ^d)$}\label {derivationsmultiplication}
In Section \ref {main}, we only considered  the effect of the Laplacian~$\D_\H$ or of the derivation~$\partial_s$  on Fourier transform. Those operations led to multiplication by~$-4|\lam| (2|m|+d)$ or~$i\lam,$ respectively,   of the Fourier transform.  We also studied the effect of the multiplication by~$|Y|^2$ 	or~$-is,$ 
and found out that they  
correspond  to the `derivation operators'~$\wh \D$ and~$\wh \cD_\lam$
for functions on $\wt\H^d.$   

Our purpose here is to study the effect of left invariant differentiations~$\cX_j$ and $\Xi_j$ and multiplication by~$M_j^{\pm}\eqdefa y_j\pm i \eta_j$ on the Fourier transform. 
This is  described by the following proposition.
 \begin{proposition}
 \label {actionX_jonFH}
{\sl  For any function~$f$ in~$\cS(\H^{d})$,   we have, for~$\lam$ different from~$0$,
\beno
\cF_\H \cX_j f & =&   -\wh \cM^+_j  \wh f_\H \andf  (\cF_\H \Xi_j f)   =   -\wh \cM^-_j  \wh f_\H\with\\
 \wh \cM^+_j \theta (\wh w)  & \eqdefa &  |\lam|^{\frac 12 } \bigl( \sqrt {2m_j+2} \,\theta(n,m+\d_j,\lam) -\sqrt {2m_j} \,\theta(n,m-\d_j,\lam)\bigr)\andf\\
 \wh \cM^-_j \theta (\wh w)   &\eqdefa & \frac { i \lam }{ |\lam|^{\frac 12 }}  \bigl( \sqrt {2m_j+2}\, \theta(n,m+\d_j,\lam) +\sqrt {2m_j} \,\theta(n,m-\d_j,\lam)\bigr).
\eeno
We also have~$\cF_\H  M_j^\pm f  =  \wh \cD_j^\pm \wh f_\H$ with
\beno
(\wh \cD_j^\pm \theta)(\wh w) &  \eqdefa & \frac {1_{\{\pm\lam>0\}} }{2|\lam|^{\frac 12} } \Bigl( \sqrt {2n_j}\, \theta (n-\d_j,m,\lam)-\sqrt  {2m_j\!+\!2}\, \theta(n,m+\d_j,\lam)\Bigr)\\
&&\  {}+  \frac {1_{\{\pm\lam<0\}} } {2|\lam|^{\frac 12}} \Bigl( \sqrt {2n_j\!+\!2}\, \theta(n+\d_j,m,\lam) -\sqrt  {2m_j}\, \theta(n,m-\d_j,\lam\Bigr).
\eeno}
 \end{proposition}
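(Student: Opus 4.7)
My plan is to prove each identity by transferring the physical-space operator onto the kernel $\overline{e^{is\lam}\cW(\wh w,Y)}$ that defines $\cF_\H$ via \eqref{definFourierWigner}, and then expressing the outcome as a finite linear combination of values of the same kernel at shifted indices by means of the Hermite recursions \eqref{relationsHHermiteCAb}. Since $\cX_j$ and $\Xi_j$ are real first-order operators whose formal transposes for Lebesgue measure satisfy $\cX_j^\top=-\cX_j$ and $\Xi_j^\top=-\Xi_j$, an integration by parts gives
$$
\cF_\H(\cX_jf)(\wh w)=-\int_{\H^d}\overline{\cX_j\bigl[e^{is\lam}\cW(\wh w,Y)\bigr]}\,f(Y,s)\,dY\,ds,
$$
and similarly for $\Xi_j$, so everything reduces to evaluating $\cX_j$ and $\Xi_j$ on the kernel. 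For $M_j^\pm$ no integration by parts is needed: one simply has to express $(y_j\pm i\eta_j)\overline{\cW(\wh w,Y)}$ as a linear combination of $\overline{\cW(\wh w',Y)}$ for shifted~$\wh w'$.

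The computation of $\cX_j[e^{is\lam}\cW]$ is exactly the one already performed in the proof of Theorem~\ref{MappingofPH}: rewriting $2\eta_j\partial_s e^{is\lam+2i\lam\langle\eta,z\rangle}=\partial_{z_j}e^{is\lam+2i\lam\langle\eta,z\rangle}$ and integrating by parts in $z$ converts $\cX_j$ acting on the kernel into the integral of $e^{is\lam+2i\lam\langle\eta,z\rangle}(\partial_{y_j}-\partial_{z_j})\bigl[H_{n,\lam}(y+z)H_{m,\lam}(-y+z)\bigr]$ over~$z$. The two contributions from differentiating $H_{n,\lam}$ cancel and only $-2H_{n,\lam}(y+z)(\partial_j H_{m,\lam})(-y+z)$ survives; injecting the rescaled version of the second line of \eqref{relationsHHermiteCAb} for $\partial_j H_{m,\lam}$ produces the two shifts $m\pm\d_j$ with prefactor $|\lam|^{1/2}$, and conjugating yields $\cF_\H\cX_j f=-\wh\cM_j^+\wh f_\H$. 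For $\Xi_j$ the computation is more direct: $\Xi_j$ applied to $e^{is\lam+2i\lam\langle\eta,z\rangle}$ produces the factor $2i\lam(z_j-y_j)=2i\lam(-y+z)_j$, which amounts to multiplying $H_{m,\lam}(-y+z)$ by $M_j$; the rescaled first line of \eqref{relationsHHermiteCAb} (where $M_j$ brings a factor $|\lam|^{-1/2}$) then gives the two shifts $m\pm\d_j$ with common prefactor $i\lam/|\lam|^{1/2}$, and the claim for $\wh\cM_j^-$ follows after taking complex conjugate.

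The only genuinely new step, which also explains the indicator functions $\mathbf 1_{\{\pm\lam>0\}}$ appearing in $\wh\cD_j^\pm$, is the computation of $(y_j\pm i\eta_j)\overline{\cW(\wh w,Y)}$. Writing $y_j=\tfrac12\bigl((y+z)_j-(-y+z)_j\bigr)$ under the integral of $\overline\cW$ turns multiplication by $y_j$ into a signed difference of $M_j$ acting on each of the two Hermite factors, while the identity $-2i\lam\eta_j\,e^{-2i\lam\langle\eta,z\rangle}=\partial_{z_j}e^{-2i\lam\langle\eta,z\rangle}$ combined with one integration by parts in $z$ turns multiplication by $\eta_j$ into a sum of $\partial_j$ acting on the two Hermite factors, with prefactor $1/(2i\lam)$. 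Expanding $M_j$ and $\partial_j$ via \eqref{relationsHHermiteCAb} makes both $y_j\overline\cW$ and $i\eta_j\overline\cW$ into four-term sums indexed by $n\pm\d_j$ and $m\pm\d_j$; crucially, the $\eta_j$ contribution carries the scalar factor $|\lam|^{1/2}/(4\lam)=\sgn\lam/(4|\lam|^{1/2})$, so in the combination $y_j\pm i\eta_j$ the four shift coefficients cancel in pairs according to the sign of~$\lam$. More precisely, when $\pm\lam>0$ only the shifts $n-\d_j$ and $m+\d_j$ survive with coefficients $\sqrt{2n_j}$ and $-\sqrt{2m_j+2}$, while when $\pm\lam<0$ only $n+\d_j$ and $m-\d_j$ survive with coefficients $\sqrt{2n_j+2}$ and $-\sqrt{2m_j}$. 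Multiplying by $e^{-is\lam}f$, integrating, and recognizing the remaining $\overline{\cW(\wh w',Y)}$-integrals as $\wh f_\H(\wh w')$ reproduces exactly the operator $\wh\cD_j^\pm$ of the statement.

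The main obstacle throughout is the bookkeeping of signs and of $\lam$ versus $|\lam|$, especially because $\overline{\cW(n,m,\lam,Y)}=\cW(n,m,-\lam,Y)$ (as $H_{n,\lam}$ depends only on $|\lam|$), and this symmetry is precisely what underlies the $\sgn\lam$ dichotomy in $\wh\cD_j^\pm$. Apart from that, the proof rests solely on two ingredients already used in Section~\ref{rangeofcS}, namely integration by parts in $z$ and the Hermite creation/annihilation recursion \eqref{relationsHHermiteCAb}, so no new analytic input is required.
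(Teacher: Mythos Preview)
Your proposal is correct and follows essentially the same route as the paper: both reduce the identities to the action of $\partial_{y_j},\partial_{\eta_j},y_j,\eta_j$ on the Wigner kernel, expressed through the Hermite recursions~\eqref{relationsHHermiteCAb}, and then read off the index shifts. The only organizational difference is that for $\cX_j$ and $\Xi_j$ you apply the vector fields directly to the full kernel $e^{is\lam}\cW$ (reusing the trick of~\eqref{MappingofPHdemoeq8}) and integrate by parts once, whereas the paper first computes the four elementary pieces $\partial_{y_j}\cW$, $\partial_{\eta_j}\cW$, $y_j\cW$, $\eta_j\cW$ separately (equations~\eqref{actionX_jonFHdemoeq1}--\eqref{actionX_jonFHdemoeq4}) and then recombines them; this is a minor streamlining, not a different argument.
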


 \begin{proof}
The main point is to  compute
$$
\partial_{y_j}  \cW(\wh w,Y)\,,\partial_{\eta_j} \cW(\wt w,Y)\,,\ y_j \cW(\wh w,Y)\andf \eta_j \cW(\wh w,Y).
$$
By the definition of~$\cW$ and Leibniz formula, we have, using 
the notation $f_\lam(x)\eqdefa f(|\lam|^{1/2}x),$ 
$$
\partial_{y_j} \cW(\wh w,Y) \!= \int_{\R^d} e^{2i\lam\langle z,\eta\rangle } |\lam|^{\frac 12} 
\bigl( (\partial_jH_n)_{\lam} (y+z) H_{m,\lam} (-y+z)  -H_{n,\lam} (y+z) (\partial_jH_m)_\lam (-y+z) \bigr)dz.
$$
{}From\refeq {relationsHHermiteCAb}, we infer that 
\begin{multline}\label{actionX_jonFHdemoeq1}
\partial_{y_j} \cW(\wh w,Y) \!=  \frac {|\lam|^{\frac 12}} 2  \bigl( \sqrt {2n_j}\, \cW(n-\d_j,m,\lam,Y) \bigr) 
-\sqrt {2n_j+2}\, \cW(n+\d_j,m,\lam,Y) \\
-\sqrt {2m_j}\,\cW(n,m-\d_j,\lam,Y) +\sqrt {2m_j+2}\,\cW(n,m+\d_j,\lam,Y)\bigr).
 \end{multline}
Let us observe that 
\beno
\partial_{\eta_j} \cW(\wh w,Y)  & = & \int_{\R^d} 2i\lam z_j e^{2i\lam\langle \eta,z\rangle} H_{n,\lam}(y+z)H_{m,\lam} (-y+z) dz\\
& = & i\lam \int_{\R^d} e^{2i\lam\langle \eta,z\rangle} \bigl ( y_j+z_j) H_{n,\lam}(y+z)H_{m,\lam} (-y+z) \\
&& \qquad\qquad\qquad\qquad\qquad\quad{}
+ H_{n,\lam}(y+z)(-y_j+z_j)H_{m,\lam} (-y+z)\bigr)\,dz\\
&= & \frac {i\lam} {|\lam|^{\frac 12} }\int_{\R^d}e^{2i\lam\langle \eta,z\rangle} \bigl((M_jH_{n})_\lam(y+z)H_{m,\lam} (-y+z) \\
&& \qquad\qquad\qquad\qquad\qquad\qquad\qquad{}+H_{n,\lam}(y+z) (M_jH_m)_\lam(-y+z) \bigr)\, dz.
\eeno
Now, using again\refeq {relationsHHermiteCAb}, we get 
\begin{multline}\label{actionX_jonFHdemoeq2}
\partial_{\eta_j} \cW(\wh w,Y) \!=   \frac {i\lam} {2|\lam|^{\frac 12} }  \bigl( \sqrt {2n_j}\, \cW(n-\d_j,m,\lam,Y) \bigr) 
+\sqrt {2n_j+2}\, \cW(n+\d_j,m,\lam,Y) \\
+\sqrt {2m_j}\,\cW(n,m-\d_j,\lam,Y) +\sqrt {2m_j+2}\,\cW(n,m+\d_j,\lam,Y)\bigr).
 \end{multline}
For multiplication by~$y_j$, we proceed along the same lines. By definition of~$\cW$, we have
$$
y_j\cW(\wh w,Y) \!=\! \frac 1 {2|\lam|^{\frac12}}\! \int_{\R^d}\! e^{2i\lam\langle \eta,z\rangle}\!   
\bigl( (M_jH_n)_\lam (y+z) H_{m,\lam}(-y+z) - H_{n,\lam}(y+z) (M_jH_m)_\lam(-y+z)\bigr)dz.
$$
Still using\refeq {relationsHHermiteCAb}, we deduce that 
\begin{multline}\label{actionX_jonFHdemoeq3}
y_j \cW(\wh w,Y)  =    \frac 1 {4|\lam|^{\frac 12} }  \bigl( \sqrt {2n_j} \,\cW(n-\d_j,m,\lam,Y) \bigr) 
+\sqrt {2n_j+2}\,\cW(n+\d_j,m,\lam,Y) \\-\sqrt {2m_j}\,\cW(n,m-\d_j,\lam,Y) -\sqrt {2m_j+2}\,\cW(n,m+\d_j,\lam,Y)\bigr).
 \end{multline}
For the multiplication by~$\eta_j$, let us observe that, performing an integration by parts, we can~write
\beno
\eta_j\cW(\wh w,Y)  & = & \frac 1 {2i\lam} \int_{\R^d} \partial_{z_j} \bigl(e^{2i\lam\langle \eta,z\rangle} \bigr)
H_{n,\lam} (y+z) H_{m,\lam}(-y+z)\, dz\\
& = & \frac i {2\lam} \int_{\R^d} e^{2i\lam\langle \eta,z\rangle}\partial_{z_j}  \bigl(
H_{n,\lam} (y+z) H_{m,\lam}(-y+z) \bigr)\,dz.
\eeno
Leibniz formula implies that 
$$
\longformule
{
\eta_j\cW(\wh w,Y) =  \frac {i|\lam|^{\frac 12}}  {2\lam} \int_{\R^d} e^{2i\lam\langle \eta,z\rangle} \bigl(
(\partial_jH_{n})_\lam (y+z) H_{m,\lam}(-y+z) 
}
{ {}
+  H_{n,\lam} (y+z) (\partial_j H_m)_\lam (-y+z) \bigr)dz.
}
$$
Using\refeq {relationsHHermiteCAb}, we deduce that 
\beq
 \label {actionX_jonFHdemoeq4}
\begin{split}
&\eta_j \cW(\wh w,Y)  =    \frac {i |\lam|^{\frac 12} } {4\lam}  \bigl( \sqrt {2n_j} \,\cW(n-\d_j,m,\lam,Y) \bigr) 
-\sqrt {2n_j+2}\, \cW(n+\d_j,m,\lam,Y) \\
 &\qquad\qquad\qquad\qquad{}+\sqrt {2m_j}\,\cW(n,m-\d_j,\lam,Y) -\sqrt {2m_j+2}\,\cW(n,m+\d_j,\lam,Y)\bigr).
 \end{split}
\eeq
As we have~$e^{-is\lam} \cX_j\bigl( e^{is\lam}\cW(\wh w,Y) \bigr) = 2i\eta_j \lam\cW(\wh w,Y) +\partial_{y_j} \cW(\wh w,Y),$
we infer from\refeq {actionX_jonFHdemoeq1} and\refeq{actionX_jonFHdemoeq4} that
\beq
 \label {actionX_jonFHdemoeq5}
 \begin{split}
 e^{-is\lam} \cX_j\bigl(  e^{is\lam}\cW(\wh w,Y) \bigr)  & = |\lam|^{\frac 12} \bigl(-\sqrt {2m_j}\,\cW(n,m-\d_j,\lam,Y)\\
 &\qquad\qquad\qquad\qquad{}+
 \sqrt {2m_j+2}\,\cW(n,m+\d_j,\lam,Y)\bigr)\\
 & =  \cM_j^+\cW(\wh w,Y).
 \end{split}
 \eeq
 As we have
$$
e^{-is\lam} \Xi_j\bigl(  e^{is\lam}\cW(\wh w,Y) \bigr) = -2iy_j \lam\cW(\wh w,Y) +\partial_{\eta_j} \cW(\wh w,Y),
$$
we infer from\refeq {actionX_jonFHdemoeq2} and\refeq{actionX_jonFHdemoeq3} that
\beq
 \label {actionX_jonFHdemoeq6}
 \begin{split}
 e^{-is\lam} \Xi_j\bigl(  e^{is\lam}\cW(\wh w,Y) \bigr)  & = \frac {i\lam} {|\lam|^{\frac 12}} \bigl(\sqrt {2m_j} \cW(n,m-\d_j,\lam,Y)\\
 &\qquad\qquad\qquad\qquad{}+
 \sqrt {2m_j+2} \cW(n,m+\d_j,\lam,Y)\bigr)\\
 & = \cM_j^-\cW(\wh w,Y).
 \end{split}
 \eeq
 It is obvious that\refeq{actionX_jonFHdemoeq3} and\refeq{actionX_jonFHdemoeq4} give
 $$\displaylines{\quad
 (y_j\pm i\eta_j) \cW(\wh w,Y)   = \frac1{4|\lam|^{\frac 12}}\Bigl(\sqrt {2n_j}\,\cW(n-\d_j,m,\lam,Y) \bigr) 
+\sqrt {2n_j+2}\,\cW(n+\d_j,m,\lam,Y)\hfill\cr\hfill
-\sqrt {2m_j}\, \cW(n,m-\d_j,\lam,Y)
- \sqrt {2m_j+2}\,\cW(n,m+\d_j,\lam,Y)
 \hfill\cr\hfill\pm{\rm sgn} (\lam) 
 \bigl( \sqrt {2n_j}\,\cW(n-\d_j,m,\lam,Y)-\sqrt {2n_j+2}\,\cW(n+\d_j,m,\lam,Y)\hfill\cr\hfill
 +\sqrt {2m_j}\,\cW(n,m-\d_j,\lam,Y)-\sqrt {2m_j+2}\,\cW(n,m+\d_j,\lam,Y)\bigr) \Bigr)\cdotp\quad}
 $$
 By definition of $\cF_\H,$  this gives the result.
\end{proof}

\end{document}